\tikzset{
  commutative diagrams/.cd, 
  arrow style=tikz, 
  diagrams={>=stealth},
  ampersand replacement=\&
}
\def\R{\mathbb{R}}
\newcommand{\longto}{\longrightarrow}
\DeclareMathOperator{\curl}{curl}
\def\divgn{\operatorname{div}}
\newcommand{\ainnerproduct}[2]{\langle #1, #2 \rangle}
\newcommand{\aInnerproduct}[2]{\bigl\langle #1, #2 \bigr\rangle}
\DeclarePairedDelimiterX{\norm}[1]{\lVert}{\rVert}{#1}
\newtheorem{theorem}{Theorem}
\numberwithin{equation}{section}
\begin{document}

\title[Higher-Order Implicit Scheme for Maxwell's]{An Arbitrarily Higher-Order Time Implicit Scheme for Maxwell's Equations}

\author{Archana Arya and Kaushik Kalyanaraman}
\address{Department of Mathematics, Indraprastha Institute of Information Technology, Delhi, New Delhi, 110020, India}
\email{archanaa@iiitd.ac.in, kaushik@iiitd.ac.in}

\date{}

\begin{abstract}
  We propose an arbitrarily higher (even) order implicit leapfrog scheme for time discretization of a three-field formulation of Maxwell's equations. We use this in conjunction with an arbitrarily higher-order and compatible discretization using finite element spaces that form a de Rham complex. In doing so, we provide a generalization of an earlier work building from~\cite{ArKa2025} and~\cite{ArKa2026}. We prove stability, demonstrate energy conservation, and characterize the asymptotic convergence of the error for the time semidiscretization as well as for the full spatial and temporal discretization of this Maxwell's system. We also provide some numerical validation using computational examples in $\mathbb{R}^2$.
  
\end{abstract}

\subjclass{35Q61, 65M06, 65M12, 65M15, 65M22, 65M60, 65Z05, 78M10}

\keywords{Error analysis, finite element exterior calculus, higher order, implicit methods, leapfrog scheme, Maxwell's equations, structure preservation, Whitney forms}

\maketitle

\section{Introduction} \label{sec:introduction}

\subsection{Three-field Formulation of Maxwell's Equations}

We begin by providing a three field $(p, E, H)$ formulation of the Maxwell's equations as below in which in addition to the electric and magnetic fields $E$ and $H$, a fictitious electrical pressure allows for an exact enforcement of $\divgn E$. Thus, we have: \begin{subequations}
  \begin{equation}
    \left. \begin{aligned}
      \dfrac{\partial p}{\partial t} + \nabla \cdot \varepsilon E & = 0, \\
      \nabla p + \varepsilon \dfrac{\partial E}{\partial t} - \nabla \times H &= 0, \\
      \mu \dfrac{\partial H}{\partial t} + \nabla \times E &= 0,
    \end{aligned} \right\} \text{ in } \Omega \times (0, T], \label{eqn:maxwells_eqns}
  \end{equation}
  where $\Omega \subset \R^2/\R^3$ is a domain with Lipschitz boundary $\partial \Omega$, and $T > 0$. We use the following homogeneous boundary conditions:
  \begin{equation}
    p = 0,  E \times n = 0, H \cdot n = 0 \text{ on } \partial\Omega \times (0, T], \label{eqn:BCs}
  \end{equation}
  where $n$ is the unit outward normal to $\partial \Omega$. We wish to state that the homogeneous boundary conditions are merely for convenience and all of our work can be extended to non homogeneous boundary conditions in a very standard way. Finally, we use the following initial conditions:
  \begin{equation}
    p(x, 0) = p_0(x), E(x, 0) = E_0(x), \text{ and } H(x, 0) = H_0(x) \text{~for~} x \in \Omega, \label{eqn:ICs}
  \end{equation}
\end{subequations}
and in which for the sake of consistency as to why $p$ was introduced, the initial conditions are required to also satisfy $\nabla \cdot (\varepsilon E_0) = p_0 $ while physics provides for $\nabla \cdot (\mu H_0) = 0$ in $\Omega$. Given this setup, we also wish to add that it is possible to extend our method and analysis to nonzero right hand sides in Equation~\eqref{eqn:maxwells_eqns} but it makes the already complicated analysis quite rather unwieldy and also requires more hypotheses on these forcing functions. As a tradeoff, we realized that this sort of absolute generalization does not lend itself to gaining any meaningful insights beyond what we can already derive from our work. Therefore, while it is possible to work with a specific instance of a higher order scheme with forcing functions, we leave this altogether out of our discussion.

We next cast Equations~\eqref{eqn:maxwells_eqns} to~\eqref{eqn:ICs} into the following variational formulation: for $t \in (0, T]$, find $(p(t), E(t), H(t)) \in \mathring{H}^1_{\varepsilon^{-1}}(\Omega) \times \mathring{H}_{\varepsilon}(\curl; \Omega) \times \mathring{H}_{\mu}(\divgn; \Omega)$ such that: \begin{subequations}
  \begin{alignat}{2}
    \aInnerproduct{\dfrac{\partial p}{\partial t}}{\widetilde{p}} - \aInnerproduct{ \varepsilon E}{\nabla \widetilde{p}} &=0, &&\quad \widetilde{p} \in \mathring{H}^1_{\varepsilon^{-1}}(\Omega), \label{eqn:maxwell_p_wf} \\
    \aInnerproduct{\nabla p}{\widetilde{E}} + \aInnerproduct{\varepsilon \dfrac{\partial E}{\partial t}}{\widetilde{E}} - \aInnerproduct{H}{\nabla \times \widetilde{E}} &= 0, &&\quad \widetilde{E} \in \mathring{H}_{\varepsilon}(\curl; \Omega), \label{eqn:maxwell_E_wf} \\
    \aInnerproduct{\mu \dfrac{\partial H}{\partial t}}{\widetilde{H}} + \aInnerproduct{\nabla \times E}{\widetilde{H}}, &= 0, &&\quad \widetilde{H} \in \mathring{H}_{\mu}(\divgn; \Omega), \label{eqn:maxwell_H_wf}
  \end{alignat}
\end{subequations}
For the sake of analysis, we require that the solutions $(p, E, H)$ have sufficient regularity. Given this background on our system of Maxwell's equations, we next state our higher order discretization by first providing an illustrative example for the sixth-order scheme, and then in the full generality for the $R^{\text{th}}$-order scheme.

\subsection{Warmup Example: LF$_6$ Scheme} Find $(p(t^{n + 1/2}), E(t^{n + 1/2}), H(t^n)) \in \mathring{H}^1_{\varepsilon^{-1}}(\Omega) \times \mathring{H}_{\varepsilon}(\curl; \Omega) \times \mathring{H}_{\mu}(\divgn; \Omega)$ such that:
\begin{subequations}
\begin{multline}
  \aInnerproduct{\dfrac{p^{n + \frac{1}{2}} - p^{n - \frac{1}{2}}}{\Delta t}}{\widetilde{p}} - \aInnerproduct{\dfrac{\varepsilon}{2} \left( E^{n + \frac{1}{2}} + E^{n - \frac{1}{2}} \right)}{\nabla \widetilde{p}} + \dfrac{\Delta t^2}{12} \aInnerproduct{\dfrac{\varepsilon}{2} \nabla \nabla \cdot \left( E^{n + \frac{1}{2}} + E^{n - \frac{1}{2}} \right)}{\nabla \widetilde{p}} \\ - \dfrac{\Delta t^4}{120} \aInnerproduct{\dfrac{\varepsilon}{2} \left( \nabla \nabla \cdot \right)^2 \left( E^{n + \frac{1}{2}} + E^{n - \frac{1}{2}} \right)}{\nabla \widetilde{p}}=0, \label{eqn:maxwell_p_lf6}
  \end{multline} \\
  \begin{multline}
   \aInnerproduct{\dfrac{1}{2} \nabla \left(p^{n + \frac{1}{2}} + p^{n - \frac{1}{2}} \right)}{\widetilde{E}} - \dfrac{\Delta t^2}{12} \aInnerproduct{\dfrac{1}{2} \nabla \left(p^{n + \frac{1}{2}} + p^{n - \frac{1}{2}} \right)}{\nabla \nabla \cdot \widetilde{E}} + \dfrac{\Delta t^4}{120} \aInnerproduct{\dfrac{1}{2} \nabla \left(p^{n + \frac{1}{2}} + p^{n - \frac{1}{2}} \right)}{\left( \nabla \nabla \cdot \right)^2 \widetilde{E}} \\ + \aInnerproduct{\varepsilon \dfrac{E^{n + \frac{1}{2}} - E^{n - \frac{1}{2}}}{\Delta t}}{\widetilde{E}} - \aInnerproduct{\dfrac{1}{2} \left( H^{n + 1} + H^n \right)}{\nabla \times \widetilde{E}}  - \dfrac{\Delta t^2}{12} \aInnerproduct{\dfrac{1}{2} \mu^{-1}\varepsilon^{-1} \nabla \times \nabla \times \left( H^{n + 1} + H^n \right)}{\nabla \times \widetilde{E}} \\ - \dfrac{\Delta t^4}{120} \aInnerproduct{\dfrac{1}{2} \mu^{-2}\varepsilon^{-2} \left( \nabla \times \nabla \times \right)^2 \left( H^{n + 1} + H^n \right)}{\nabla \times \widetilde{E}} = 0, \label{eqn:maxwell_E_lf6} 
     \end{multline} \\
     \begin{multline}
  \aInnerproduct{\mu \dfrac{H^{n + 1} - H^n}{\Delta t}}{\widetilde{H}} +  \aInnerproduct{\dfrac{1}{2} \nabla \times \left( E^{n + \frac{1}{2}} + E^{n - \frac{1}{2}} \right)}{\widetilde{H}} +  \dfrac{\Delta t^2}{12} \aInnerproduct{\dfrac{1}{2} \varepsilon^{-1}\mu^{-1} \nabla \times \left( E^{n + \frac{1}{2}} + E^{n - \frac{1}{2}} \right)}{\nabla \times  \nabla \times \widetilde{H}} \\ + \dfrac{\Delta t^4}{120} \aInnerproduct{\dfrac{1}{2} \varepsilon^{-2}\mu^{-2} \nabla \times \left( E^{n + \frac{1}{2}} + E^{n - \frac{1}{2}} \right)}{\left( \nabla \times  \nabla \times \right)^2 \widetilde{H}} = 0, \label{eqn:maxwell_H_lf6}
  \end{multline}
\end{subequations}
for all $\widetilde{p} \in \mathring{H}^1_{\varepsilon^{-1}}(\Omega), \; \widetilde{E} \in \mathring{H}_{\varepsilon}(\curl; \Omega)$ and $\widetilde{H} \in \mathring{H}_{\mu}(\divgn; \Omega)$. In this description, $\left( \nabla \nabla \cdot \right)^2$ stands for the operator $\left( \nabla \nabla \cdot \nabla \nabla \cdot \right)$ and likewise $\left( \nabla \times  \nabla \times \right)^2$ denotes $\left( \nabla \times  \nabla \times \nabla \times  \nabla \times \right)$. In order to bootstrap the computations, we will use the following scheme for the first half time step for $p$ and $E$, and for the first time step for $H$:
\begin{subequations}
  \begin{multline}
    \aInnerproduct{\dfrac{p^{\frac{1}{2}} - p_0}{\Delta t/2}}{\widetilde{p}} - \dfrac{1}{2} \aInnerproduct{\dfrac{\varepsilon}{2} \left( E^{\frac{1}{2}} + E_0 \right)}{\nabla \widetilde{p}} + \dfrac{1}{8} \cdot \dfrac{\Delta t^2}{12} \aInnerproduct{\dfrac{\varepsilon}{2} \nabla \nabla \cdot \left( E^{\frac{1}{2}} + E_0 \right)}{\nabla \widetilde{p}} \\ - \dfrac{1}{32} \cdot \dfrac{\Delta t^4}{120} \aInnerproduct{\dfrac{\varepsilon}{2} \left( \nabla \nabla \cdot \right)^2 \left( E^{\frac{1}{2}} + E_0 \right)}{\nabla \widetilde{p}}  = 0, \label{eqn:maxwell_p0_lf6}
    \end{multline} \\
    \begin{multline}
  \dfrac{1}{2}  \aInnerproduct{\dfrac{1}{2} \nabla \left(  p^{\frac{1}{2}} +  p_0 \right)}{\widetilde{E}} - \dfrac{1}{8} \cdot \dfrac{\Delta t^2}{12} \aInnerproduct{\dfrac{1}{2} \nabla \left(  p^{\frac{1}{2}} +  p_0 \right)}{\nabla \nabla \cdot \widetilde{E}} + \dfrac{1}{32} \cdot \dfrac{\Delta t^4}{120} \aInnerproduct{\dfrac{1}{2} \nabla \left(  p^{\frac{1}{2}} +  p_0 \right)}{\left( \nabla \nabla \cdot \right)^2 \widetilde{E}} \\ + \aInnerproduct{\varepsilon \dfrac{E^{\frac{1}{2}} - E_0}{\Delta t/2}}{\widetilde{E}}  - \aInnerproduct{\dfrac{1}{2} \left( H^1 + H_0 \right)}{\nabla \times \widetilde{E}} -\dfrac{1}{4} \cdot \dfrac{\Delta t^2}{12}  \aInnerproduct{\dfrac{1}{2} \mu^{-1} \varepsilon^{-1} \nabla \times \nabla \times \left( H^1 + H_0 \right)}{\nabla \times \widetilde{E}} \\ + \dfrac{1}{16} \cdot \dfrac{\Delta t^4}{120}  \aInnerproduct{\dfrac{1}{2} \mu^{-2} \varepsilon^{-2} \left( \nabla \times \nabla \times \right)^2 \left( H^1 + H_0 \right)}{\nabla \times \widetilde{E}} = 0, \label{eqn:maxwell_E0_lf6} 
  \end{multline} \\
  \begin{multline}
    \aInnerproduct{\mu \dfrac{H^1 - H_0}{\Delta t}}{\widetilde{H}} + \dfrac{1}{2} \aInnerproduct{\dfrac{1}{2} \nabla \times \left(E^{\frac{1}{2}} + E_0 \right)}{\widetilde{H}} + \dfrac{1}{8} \cdot \dfrac{\Delta t^2}{12} \aInnerproduct{\dfrac{1}{2} \varepsilon^{-1} \mu^{-1} \nabla \times \left(E^{\frac{1}{2}} + E_0 \right)}{ \nabla \times \nabla \times  \widetilde{H}} \\ + \dfrac{1}{32} \cdot \dfrac{\Delta t^4}{120} \aInnerproduct{\dfrac{1}{2} \varepsilon^{-2} \mu^{-2} \nabla \times \left(E^{\frac{1}{2}} + E_0 \right)}{\left( \nabla \times \nabla \times \right)^2 \widetilde{H}} = 0. \label{eqn:maxwell_H0_lf6}
    \end{multline}
\end{subequations}
We now rewrite LF$_6$ using some additional notation with summations as next. This is so as to make it easier to then state the arbitrarily higher-order scheme LF$_R$.

\subsection{LF$_6$ Scheme in a Sum Notation} Find $(p(t^{n + 1/2}), E(t^{n + 1/2}), H(t^n)) \in \mathring{H}^1_{\varepsilon^{-1}}(\Omega) \times \mathring{H}_{\varepsilon}(\curl; \Omega) \times \mathring{H}_{\mu}(\divgn; \Omega)$ such that: 
\begin{subequations} 
\begin{multline}
  \aInnerproduct{\dfrac{p^{n + \frac{1}{2}} - p^{n - \frac{1}{2}}}{\Delta t}}{\widetilde{p}} - \aInnerproduct{\dfrac{\varepsilon}{2} \left( E^{n + \frac{1}{2}} + E^{n - \frac{1}{2}} \right)}{\nabla \widetilde{p}} \\ + \sum\limits_{\mathfrak{a} = 1}^{\frac{6}{2} - 1} \sum\limits_{k_\mathfrak{a} = 1}^{\frac{6}{2} - 1 - \sum\limits_{\mathfrak{b} = 1}^{\mathfrak{a} - 1} k_\mathfrak{b}} C_{k_1} \mathfrak{f}(\mathfrak{a}) \aInnerproduct{\dfrac{\varepsilon}{2} (\nabla \nabla \cdot)^{\sum\limits_{\mathfrak{m} = 1}^\mathfrak{a} k_\mathfrak{m}} \left( E^{n + \frac{1}{2}} + E^{n - \frac{1}{2}} \right)}{\nabla \widetilde{p}} =0, \label{eqn:maxwell_p_lf6_general}
  \end{multline} \\
  \begin{multline}
   \aInnerproduct{\dfrac{1}{2} \nabla \left(p^{n + \frac{1}{2}} + p^{n - \frac{1}{2}} \right)}{\widetilde{E}} - \sum\limits_{\mathfrak{a} = 1}^{\frac{6}{2} - 1} \sum\limits_{k_\mathfrak{a} = 1}^{\frac{6}{2} - 1 - \sum\limits_{\mathfrak{b} = 1}^{\mathfrak{a} - 1} k_\mathfrak{b}} C_{k_1} \mathfrak{f}(\mathfrak{a}) \aInnerproduct{\dfrac{1}{2} \nabla \left(p^{n + \frac{1}{2}} + p^{n - \frac{1}{2}} \right)}{\left( \nabla \nabla \cdot \right)^{\sum\limits_{\mathfrak{m} = 1}^\mathfrak{a} k_\mathfrak{m}} \widetilde{E}} \\ + \aInnerproduct{\varepsilon \dfrac{E^{n + \frac{1}{2}} - E^{n - \frac{1}{2}}}{\Delta t}}{\widetilde{E}} -  \aInnerproduct{\dfrac{1}{2} \left( H^{n + 1} + H^n \right)}{\nabla \times \widetilde{E}} \\ + \sum\limits_{\mathfrak{a} = 1}^{\frac{6}{2} - 1} \sum\limits_{k_\mathfrak{a} = 1}^{\frac{6}{2} - 1 - \sum\limits_{\mathfrak{b} = 1}^{\mathfrak{a} - 1} k_\mathfrak{b}} C_{k_1} \mathfrak{f}( \mathfrak{a}) (-1)^{\sum\limits_{\mathfrak{m} = 1}^\mathfrak{a} k_\mathfrak{m}} \aInnerproduct{\dfrac{1}{2} ( \mu \varepsilon )^{-\sum\limits_{\mathfrak{m} = 1}^\mathfrak{a} k_\mathfrak{m}} \left( \nabla \times \nabla \times \right)^{\sum\limits_{\mathfrak{m} = 1}^\mathfrak{a} k_\mathfrak{m}} \left( H^{n + 1} + H^n \right)}{\nabla \times \widetilde{E}} = 0, \label{eqn:maxwell_E_lf6_general} 
     \end{multline} \\
      \begin{multline}
  \aInnerproduct{\mu \dfrac{H^{n + 1} - H^n}{\Delta t}}{\widetilde{H}} +  \aInnerproduct{\dfrac{1}{2} \nabla \times \left( E^{n + \frac{1}{2}} + E^{n - \frac{1}{2}} \right)}{\widetilde{H}} \\ - \sum\limits_{\mathfrak{a} = 1}^{\frac{6}{2} - 1} \sum\limits_{k_\mathfrak{a} = 1}^{\frac{6}{2} - 1 - \sum\limits_{\mathfrak{b} = 1}^{\mathfrak{a} - 1} k_\mathfrak{b}} C_{k_1} \mathfrak{f}(\mathfrak{a}) (-1)^{\sum\limits_{\mathfrak{m} = 1}^\mathfrak{a} k_\mathfrak{m}} \aInnerproduct{\dfrac{1}{2} (\varepsilon \mu)^{-\sum\limits_{\mathfrak{m} = 1}^\mathfrak{a} k_\mathfrak{m}} \nabla \times \left( E^{n + \frac{1}{2}} + E^{n - \frac{1}{2}} \right)}{\left( \nabla \times  \nabla \times \right)^{\sum\limits_{\mathfrak{m} = 1}^\mathfrak{a} k_\mathfrak{m}} \widetilde{H}} = 0, \label{eqn:maxwell_H_lf6_general}
       \end{multline}
\end{subequations}
for all $\widetilde{p} \in \mathring{H}^1_{\varepsilon^{-1}}(\Omega), \; \widetilde{E} \in \mathring{H}_{\varepsilon}(\curl; \Omega)$ and $\widetilde{H} \in \mathring{H}_{\mu}(\divgn; \Omega)$. The bootstrapping equations using the initial values at $t = 0$ are:
\begin{subequations}
  \begin{multline}
    \aInnerproduct{\dfrac{p^{\frac{1}{2}} - p_0}{\Delta t/2}}{\widetilde{p}} - \dfrac{1}{2} \aInnerproduct{\dfrac{\varepsilon}{2} \left( E^{\frac{1}{2}} + E_0 \right)}{\nabla \widetilde{p}} \\ + \sum\limits_{\mathfrak{a} = 1}^{\frac{6}{2} - 1} \sum\limits_{k_\mathfrak{a} = 1}^{\frac{6}{2} - 1 - \sum\limits_{\mathfrak{b} = 1}^{\mathfrak{a} - 1} k_\mathfrak{b}} \dfrac{C_{k_1}}{2^{2 \sum\limits_{\mathfrak{m} = 1}^\mathfrak{a} k_\mathfrak{m} + 1}} \mathfrak{f}( \mathfrak{a}) \aInnerproduct{\dfrac{\varepsilon}{2} (\nabla \nabla \cdot)^{\sum\limits_{\mathfrak{m} = 1}^\mathfrak{a} k_\mathfrak{m}} \left( E^{\frac{1}{2}} + E_0 \right)}{\nabla \widetilde{p}}  = 0, \label{eqn:maxwell_p0_lf6_general}
    \end{multline} \\
    \begin{multline}
  \dfrac{1}{2}  \aInnerproduct{\dfrac{1}{2} \nabla \left(  p^{\frac{1}{2}} +  p_0 \right)}{\widetilde{E}} - \sum\limits_{\mathfrak{a} = 1}^{\frac{6}{2} - 1} \sum\limits_{k_\mathfrak{a} = 1}^{\frac{6}{2} - 1 - \sum\limits_{\mathfrak{b} = 1}^{\mathfrak{a} - 1} k_\mathfrak{b}} \dfrac{C_{k_1}}{2^{2 \sum\limits_{\mathfrak{m} = 1}^\mathfrak{a} k_\mathfrak{m} + 1}} \mathfrak{f}(\mathfrak{a}) \aInnerproduct{\dfrac{1}{2} \nabla \left(p^{\frac{1}{2}} + p_0 \right)}{\left( \nabla \nabla \cdot \right)^{\sum\limits_{\mathfrak{m} = 1}^\mathfrak{a} k_\mathfrak{m}} \widetilde{E}} \\ + \aInnerproduct{\varepsilon \dfrac{E^{\frac{1}{2}} - E_0}{\Delta t/2}}{\widetilde{E}} - \aInnerproduct{\dfrac{1}{2} \left( H^1 + H_0 \right)}{\nabla \times \widetilde{E}} \\ + \sum\limits_{\mathfrak{a} = 1}^{\frac{6}{2} - 1}\sum\limits_{k_\mathfrak{a} = 1}^{\frac{6}{2} - 1 - \sum\limits_{\mathfrak{b} = 1}^{\mathfrak{a} - 1} k_\mathfrak{b}}  \dfrac{C_{k_1}}{2^{2 \sum\limits_{\mathfrak{m} = 1}^\mathfrak{a} k_\mathfrak{m}}} \mathfrak{f}( \mathfrak{a}) (-1)^{\sum\limits_{\mathfrak{m} = 1}^\mathfrak{a} k_\mathfrak{m}} \aInnerproduct{\dfrac{1}{2} ( \mu \varepsilon )^{-\sum\limits_{\mathfrak{m} = 1}^\mathfrak{a} k_\mathfrak{m}} \left( \nabla \times \nabla \times \right)^{\sum\limits_{\mathfrak{m} = 1}^\mathfrak{a} k_\mathfrak{m}} \left( H^{1} + H_0 \right)}{\nabla \times \widetilde{E}} = 0, \label{eqn:maxwell_E0_lf6_general} 
  \end{multline} \\
  \begin{multline}
    \aInnerproduct{\mu \dfrac{H^1 - H_0}{\Delta t}}{\widetilde{H}} + \dfrac{1}{2} \aInnerproduct{\dfrac{1}{2} \nabla \times \left(E^{\frac{1}{2}} + E_0 \right)}{\widetilde{H}} \\ - \sum\limits_{\mathfrak{a} = 1}^{\frac{6}{2} - 1} \sum\limits_{k_\mathfrak{a} = 1}^{\frac{6}{2} - 1 - \sum\limits_{\mathfrak{b} = 1}^{\mathfrak{a} - 1} k_\mathfrak{b}} \dfrac{C_{k_1}}{2^{2 \sum\limits_{\mathfrak{m} = 1}^\mathfrak{a} k_\mathfrak{m} + 1}} \mathfrak{f}(\mathfrak{a}) (-1)^{\sum\limits_{\mathfrak{m} = 1}^\mathfrak{a} k_\mathfrak{m}} \aInnerproduct{\dfrac{1}{2} (\varepsilon \mu)^{-\sum\limits_{\mathfrak{m} = 1}^\mathfrak{a} k_\mathfrak{m}} \nabla \times \left( E^{\frac{1}{2}} + E_0 \right)}{\left( \nabla \times  \nabla \times \right)^{\sum\limits_{\mathfrak{m} = 1}^\mathfrak{a} k_\mathfrak{m}} \widetilde{H}} = 0, \label{eqn:maxwell_H0_lf6_general}
    \end{multline}
where 
\begin{equation}
\mathfrak{f}(\mathfrak{a}) \coloneq  \left( \prod\limits_{\mathfrak{c} = 2}^\mathfrak{a} \dfrac{1}{(2 k_\mathfrak{c})!} \right) \left(\dfrac{\Delta t}{2}\right)^{2 \sum\limits_{\mathfrak{d} = 1}^\mathfrak{a} k_\mathfrak{d}}, \quad C_{k_1} \coloneq \left(\dfrac{1}{(2 k_1)!} - \dfrac{1}{(2 k_1 + 1)!}\right) . \label{eqn:fa_Ck1_value}
\end{equation}
\end{subequations}
We can now state our LF$_R$ scheme and we do so by appropriately generalizing from the sixth order in the LF$_6$ expressions as in Equations~\labelcref{eqn:maxwell_p_lf6_general,eqn:maxwell_E_lf6_general,eqn:maxwell_H_lf6_general,eqn:maxwell_p0_lf6_general,eqn:maxwell_E0_lf6_general,eqn:maxwell_H0_lf6_general} to $R^{\text{th}}$ order.

\subsection{The LF$_R$ Scheme} Find $(p(t^{n + 1/2}), E(t^{n + 1/2}), H(t^n)) \in \mathring{H}^1_{\varepsilon^{-1}}(\Omega) \times \mathring{H}_{\varepsilon}(\curl; \Omega) \times \mathring{H}_{\mu}(\divgn; \Omega)$ such that: 
\begin{subequations} 
\begin{multline}
  \aInnerproduct{\dfrac{p^{n + \frac{1}{2}} - p^{n - \frac{1}{2}}}{\Delta t}}{\widetilde{p}} - \aInnerproduct{\dfrac{\varepsilon}{2} \left( E^{n + \frac{1}{2}} + E^{n - \frac{1}{2}} \right)}{\nabla \widetilde{p}} \\ + \sum\limits_{\mathfrak{a} = 1}^{\frac{R}{2} - 1} \sum\limits_{k_\mathfrak{a} = 1}^{\frac{R}{2} - 1 - \sum\limits_{\mathfrak{b} = 1}^{\mathfrak{a} - 1} k_\mathfrak{b}} C_{k_1} \mathfrak{f}(\mathfrak{a}) \aInnerproduct{\dfrac{\varepsilon}{2} (\nabla \nabla \cdot)^{\sum\limits_{\mathfrak{m} = 1}^\mathfrak{a} k_\mathfrak{m}} \left( E^{n + \frac{1}{2}} + E^{n - \frac{1}{2}} \right)}{\nabla \widetilde{p}} =0, \label{eqn:maxwell_p_lfR}
  \end{multline} \\
  \begin{multline}
   \aInnerproduct{\dfrac{1}{2} \nabla \left(p^{n + \frac{1}{2}} + p^{n - \frac{1}{2}} \right)}{\widetilde{E}} - \sum\limits_{\mathfrak{a} = 1}^{\frac{R}{2} - 1} \sum\limits_{k_\mathfrak{a} = 1}^{\frac{R}{2} - 1 - \sum\limits_{\mathfrak{b} = 1}^{\mathfrak{a} - 1} k_\mathfrak{b}} C_{k_1} \mathfrak{f}(\mathfrak{a}) \aInnerproduct{\dfrac{1}{2} \nabla \left(p^{n + \frac{1}{2}} + p^{n - \frac{1}{2}} \right)}{\left( \nabla \nabla \cdot \right)^{\sum\limits_{\mathfrak{m} = 1}^\mathfrak{a} k_\mathfrak{m}} \widetilde{E}} \\ + \aInnerproduct{\varepsilon \dfrac{E^{n + \frac{1}{2}} - E^{n - \frac{1}{2}}}{\Delta t}}{\widetilde{E}} -  \aInnerproduct{\dfrac{1}{2} \left( H^{n + 1} + H^n \right)}{\nabla \times \widetilde{E}} \\ + \sum\limits_{\mathfrak{a} = 1}^{\frac{R}{2} - 1} \sum\limits_{k_\mathfrak{a} = 1}^{\frac{R}{2} - 1 - \sum\limits_{\mathfrak{b} = 1}^{\mathfrak{a} - 1} k_\mathfrak{b}} C_{k_1} \mathfrak{f}( \mathfrak{a}) (-1)^{\sum\limits_{\mathfrak{m} = 1}^\mathfrak{a} k_\mathfrak{m}} \aInnerproduct{\dfrac{1}{2} ( \mu \varepsilon )^{-\sum\limits_{\mathfrak{m} = 1}^\mathfrak{a} k_\mathfrak{m}} \left( \nabla \times \nabla \times \right)^{\sum\limits_{\mathfrak{m} = 1}^\mathfrak{a} k_\mathfrak{m}} \left( H^{n + 1} + H^n \right)}{\nabla \times \widetilde{E}} = 0, \label{eqn:maxwell_E_lfR} 
     \end{multline} \\
      \begin{multline}
  \aInnerproduct{\mu \dfrac{H^{n + 1} - H^n}{\Delta t}}{\widetilde{H}} +  \aInnerproduct{\dfrac{1}{2} \nabla \times \left( E^{n + \frac{1}{2}} + E^{n - \frac{1}{2}} \right)}{\widetilde{H}} \\ - \sum\limits_{\mathfrak{a} = 1}^{\frac{R}{2} - 1} \sum\limits_{k_\mathfrak{a} = 1}^{\frac{R}{2} - 1 - \sum\limits_{\mathfrak{b} = 1}^{\mathfrak{a} - 1} k_\mathfrak{b}} C_{k_1} \mathfrak{f}(\mathfrak{a}) (-1)^{\sum\limits_{\mathfrak{m} = 1}^\mathfrak{a} k_\mathfrak{m}} \aInnerproduct{\dfrac{1}{2} (\varepsilon \mu)^{-\sum\limits_{\mathfrak{m} = 1}^\mathfrak{a} k_\mathfrak{m}} \nabla \times \left( E^{n + \frac{1}{2}} + E^{n - \frac{1}{2}} \right)}{\left( \nabla \times  \nabla \times \right)^{\sum\limits_{\mathfrak{m} = 1}^\mathfrak{a} k_\mathfrak{m}} \widetilde{H}} = 0, \label{eqn:maxwell_H_lfR}
       \end{multline}
\end{subequations}
for all $\widetilde{p} \in \mathring{H}^1_{\varepsilon^{-1}}(\Omega), \; \widetilde{E} \in \mathring{H}_{\varepsilon}(\curl; \Omega)$ and $\widetilde{H} \in \mathring{H}_{\mu}(\divgn; \Omega)$ with bootstrapping using the given initial values at $t = 0$ for the first half time step for $p$ and $E$, and for the first time step for $H$:
\begin{subequations}
  \begin{multline}
    \aInnerproduct{\dfrac{p^{\frac{1}{2}} - p_0}{\Delta t/2}}{\widetilde{p}} - \dfrac{1}{2} \aInnerproduct{\dfrac{\varepsilon}{2} \left( E^{\frac{1}{2}} + E_0 \right)}{\nabla \widetilde{p}} \\ + \sum\limits_{\mathfrak{a} = 1}^{\frac{R}{2} - 1} \sum\limits_{k_\mathfrak{a} = 1}^{\frac{R}{2} - 1 - \sum\limits_{\mathfrak{b} = 1}^{\mathfrak{a} - 1} k_\mathfrak{b}} \dfrac{C_{k_1}}{2^{2 \sum\limits_{\mathfrak{m} = 1}^\mathfrak{a} k_\mathfrak{m} + 1}} \mathfrak{f}( \mathfrak{a}) \aInnerproduct{\dfrac{\varepsilon}{2} (\nabla \nabla \cdot)^{\sum\limits_{\mathfrak{m} = 1}^\mathfrak{a} k_\mathfrak{m}} \left( E^{\frac{1}{2}} + E_0 \right)}{\nabla \widetilde{p}}  = 0, \label{eqn:maxwell_p0_lfR}
    \end{multline} \\
    \begin{multline}
  \dfrac{1}{2}  \aInnerproduct{\dfrac{1}{2} \nabla \left(  p^{\frac{1}{2}} +  p_0 \right)}{\widetilde{E}} - \sum\limits_{\mathfrak{a} = 1}^{\frac{R}{2} - 1} \sum\limits_{k_\mathfrak{a} = 1}^{\frac{R}{2} - 1 - \sum\limits_{\mathfrak{b} = 1}^{\mathfrak{a} - 1} k_\mathfrak{b}} \dfrac{C_{k_1}}{2^{2 \sum\limits_{\mathfrak{m} = 1}^\mathfrak{a} k_\mathfrak{m} + 1}} \mathfrak{f}(\mathfrak{a}) \aInnerproduct{\dfrac{1}{2} \nabla \left(p^{\frac{1}{2}} + p_0 \right)}{\left( \nabla \nabla \cdot \right)^{\sum\limits_{\mathfrak{m} = 1}^\mathfrak{a} k_\mathfrak{m}} \widetilde{E}} \\ + \aInnerproduct{\varepsilon \dfrac{E^{\frac{1}{2}} - E_0}{\Delta t/2}}{\widetilde{E}} - \aInnerproduct{\dfrac{1}{2} \left( H^1 + H_0 \right)}{\nabla \times \widetilde{E}} \\ + \sum\limits_{\mathfrak{a} = 1}^{\frac{R}{2} - 1}\sum\limits_{k_\mathfrak{a} = 1}^{\frac{R}{2} - 1 - \sum\limits_{\mathfrak{b} = 1}^{\mathfrak{a} - 1} k_\mathfrak{b}}  \dfrac{C_{k_1}}{2^{2 \sum\limits_{\mathfrak{m} = 1}^\mathfrak{a} k_\mathfrak{m}}} \mathfrak{f}( \mathfrak{a}) (-1)^{\sum\limits_{\mathfrak{m} = 1}^\mathfrak{a} k_\mathfrak{m}} \aInnerproduct{\dfrac{1}{2} ( \mu \varepsilon )^{-\sum\limits_{\mathfrak{m} = 1}^\mathfrak{a} k_\mathfrak{m}} \left( \nabla \times \nabla \times \right)^{\sum\limits_{\mathfrak{m} = 1}^\mathfrak{a} k_\mathfrak{m}} \left( H^{1} + H_0 \right)}{\nabla \times \widetilde{E}} = 0, \label{eqn:maxwell_E0_lfR} 
  \end{multline} \\
  \begin{multline}
    \aInnerproduct{\mu \dfrac{H^1 - H_0}{\Delta t}}{\widetilde{H}} + \dfrac{1}{2} \aInnerproduct{\dfrac{1}{2} \nabla \times \left(E^{\frac{1}{2}} + E_0 \right)}{\widetilde{H}} \\ - \sum\limits_{\mathfrak{a} = 1}^{\frac{R}{2} - 1} \sum\limits_{k_\mathfrak{a} = 1}^{\frac{R}{2} - 1 - \sum\limits_{\mathfrak{b} = 1}^{\mathfrak{a} - 1} k_\mathfrak{b}} \dfrac{C_{k_1}}{2^{2 \sum\limits_{\mathfrak{m} = 1}^\mathfrak{a} k_\mathfrak{m} + 1}} \mathfrak{f}(\mathfrak{a}) (-1)^{\sum\limits_{\mathfrak{m} = 1}^\mathfrak{a} k_\mathfrak{m}} \aInnerproduct{\dfrac{1}{2} (\varepsilon \mu)^{-\sum\limits_{\mathfrak{m} = 1}^\mathfrak{a} k_\mathfrak{m}} \nabla \times \left( E^{\frac{1}{2}} + E_0 \right)}{\left( \nabla \times  \nabla \times \right)^{\sum\limits_{\mathfrak{m} = 1}^\mathfrak{a} k_\mathfrak{m}} \widetilde{H}} = 0, \label{eqn:maxwell_H0_lfR}
    \end{multline}
\end{subequations}
where $\mathfrak{f}(\mathfrak{a})$ and $C_{k_1}$ are defined to be same as in ~\eqref{eqn:fa_Ck1_value}

\subsection{A Note on Preliminaries} We choose to be compact and efficient in our presentation here, and refer to the earlier work~\cite[Section 2]{ArKa2025} for a substantial overview of the function spaces, finite elements, and various inequalities that we use in the course of our analysis of LF$_R$. A summary of these preliminaries can also be found in~\cite[Section 1.2]{ArKa2026}. In the context of this latter work, we wish to note that LF$_R$ is the most generalized version of the method that is called the spatial strategy scheme as in~\cite[Section 1.3]{ArKa2026}. Some background and earlier work that provide the foundation for our problem, the techniques and tools are already stated in~\cite[Section 1.1]{ArKa2025}.

\subsection{Organization} In Section~\labelcref{sec:implicit_lf$R$}, we prove the energy conservation of LF$_R$, and analyze and characterize the asymptotic convergence of the semidiscretization error for the Maxwell's system as well as the error for the full discretization of this system in conjunction with compatible finite elements as drawn from finite element exterior calculus~\cite{Arnold2018}. In Section~\labelcref{sec:numerics}, we provide some computational examples and end with a brief summary of our work.

 \section{Characterization of Implicit LF$_R$ Scheme} \label{sec:implicit_lf$R$}
 
 \subsection{Time Discretization Stability}

\begin{theorem}[Discrete Energy Estimate] \label{thm:dscrt_enrgy_estmt_lfR}
  For the semidiscretization using the LF$_R$ scheme as given in Equations~\labelcref{eqn:maxwell_p_lfR,eqn:maxwell_E_lfR,eqn:maxwell_H_lfR,eqn:maxwell_p0_lfR,eqn:maxwell_E0_lfR,eqn:maxwell_H0_lfR}, and for any fixed time step $\Delta t$ > 0 sufficiently small, we have that:
  \[
  \mathcal{E}^N \coloneq \norm{p^{N - \frac{1}{2}}}^2_{\varepsilon^{-1}} + \norm{E^{N - \frac{1}{2}}}^2_{\varepsilon} + \norm{H^N}^2_{\mu} = \norm{p_0}^2_{\varepsilon^{-1}} + \norm{E_0}^2_{\varepsilon} + \norm{H_0}^2_{\mu} \eqcolon \mathcal{E}^0.
\]
\end{theorem}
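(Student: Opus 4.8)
The plan is to run the classical leapfrog energy identity, adapted to the implicit higher-order form of LF$_R$: test the three equations of the scheme with suitable shifts of the unknowns so that the discrete time-difference terms telescope into the three weighted square norms defining $\mathcal{E}^{\bullet}$, while every remaining term --- including all the higher-order correction terms carrying the coefficients $C_{k_1}\mathfrak{f}(\mathfrak{a})$ --- cancels in pairs upon summation. Fix $n\ge 1$ and in \eqref{eqn:maxwell_p_lfR}, \eqref{eqn:maxwell_E_lfR}, \eqref{eqn:maxwell_H_lfR} choose $\widetilde{p}=\varepsilon^{-1}(p^{n+1/2}+p^{n-1/2})$, $\widetilde{E}=E^{n+1/2}+E^{n-1/2}$, $\widetilde{H}=H^{n+1}+H^{n}$. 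These are admissible because $\varepsilon,\mu$ are constant (so $\varepsilon^{-1}(p^{n+1/2}+p^{n-1/2})$ still lies in $\mathring{H}^1_{\varepsilon^{-1}}(\Omega)$) and the solutions have the regularity assumed for the operators $(\nabla\nabla\cdot)^{j}$, $(\nabla\times\nabla\times)^{j}$ to act on them. The first term of \eqref{eqn:maxwell_p_lfR} then equals $\tfrac{1}{\Delta t}(\norm{p^{n+1/2}}^2_{\varepsilon^{-1}}-\norm{p^{n-1/2}}^2_{\varepsilon^{-1}})$, the $\varepsilon\,\partial_t E$ term of \eqref{eqn:maxwell_E_lfR} equals $\tfrac{1}{\Delta t}(\norm{E^{n+1/2}}^2_{\varepsilon}-\norm{E^{n-1/2}}^2_{\varepsilon})$, and the $\mu\,\partial_t H$ term of \eqref{eqn:maxwell_H_lfR} equals $\tfrac{1}{\Delta t}(\norm{H^{n+1}}^2_{\mu}-\norm{H^{n}}^2_{\mu})$.

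Summing the three equations, the remaining terms split into a $p$--$E$ block and an $E$--$H$ block. In the first block the leading term $-\langle\tfrac{\varepsilon}{2}(E^{n+1/2}+E^{n-1/2}),\nabla\widetilde{p}\rangle$ of the $p$-equation becomes $-\tfrac12\langle E^{n+1/2}+E^{n-1/2},\nabla(p^{n+1/2}+p^{n-1/2})\rangle$ once the $\varepsilon^{-1}$ in $\widetilde{p}$ absorbs the $\varepsilon$, which is exactly the negative of the leading term $\langle\tfrac12\nabla(p^{n+1/2}+p^{n-1/2}),\widetilde{E}\rangle$ of the $E$-equation; and each $\mathfrak{a}$-summand $+C_{k_1}\mathfrak{f}(\mathfrak{a})\langle\tfrac{\varepsilon}{2}(\nabla\nabla\cdot)^{\sum k_\mathfrak{m}}(E^{n+1/2}+E^{n-1/2}),\nabla\widetilde{p}\rangle$ from the $p$-equation cancels the matching summand $-C_{k_1}\mathfrak{f}(\mathfrak{a})\langle\tfrac12\nabla(p^{n+1/2}+p^{n-1/2}),(\nabla\nabla\cdot)^{\sum k_\mathfrak{m}}\widetilde{E}\rangle$ from the $E$-equation, using only symmetry of the real $L^2$ inner product. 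In the $E$--$H$ block the leading terms $-\langle\tfrac12(H^{n+1}+H^n),\nabla\times\widetilde{E}\rangle$ and $\langle\tfrac12\nabla\times(E^{n+1/2}+E^{n-1/2}),\widetilde{H}\rangle$ cancel, and each higher-order summand $+C_{k_1}\mathfrak{f}(\mathfrak{a})(-1)^{\sum k_\mathfrak{m}}\langle\tfrac12(\mu\varepsilon)^{-\sum k_\mathfrak{m}}(\nabla\times\nabla\times)^{\sum k_\mathfrak{m}}(H^{n+1}+H^n),\nabla\times\widetilde{E}\rangle$ of the $E$-equation cancels the summand $-C_{k_1}\mathfrak{f}(\mathfrak{a})(-1)^{\sum k_\mathfrak{m}}\langle\tfrac12(\varepsilon\mu)^{-\sum k_\mathfrak{m}}\nabla\times(E^{n+1/2}+E^{n-1/2}),(\nabla\times\nabla\times)^{\sum k_\mathfrak{m}}\widetilde{H}\rangle$ of the $H$-equation, again by $L^2$ symmetry together with commutativity of the constants $\varepsilon,\mu$. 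What survives is $\tfrac{1}{\Delta t}(\mathcal{E}^{n+1}-\mathcal{E}^{n})=0$, where $\mathcal{E}^{n}\coloneq\norm{p^{n-1/2}}^2_{\varepsilon^{-1}}+\norm{E^{n-1/2}}^2_{\varepsilon}+\norm{H^{n}}^2_{\mu}$ for $n\ge 1$; hence $\mathcal{E}^{n+1}=\mathcal{E}^{n}$ for all $n\ge1$.

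The remaining task is to bridge $\mathcal{E}^1$ to $\mathcal{E}^0$ via the bootstrapping equations \eqref{eqn:maxwell_p0_lfR}--\eqref{eqn:maxwell_H0_lfR}. I would repeat the computation with $\widetilde{p}=\varepsilon^{-1}(p^{1/2}+p_0)$, $\widetilde{E}=E^{1/2}+E_0$, $\widetilde{H}=H^1+H_0$, but now multiply \eqref{eqn:maxwell_p0_lfR} and \eqref{eqn:maxwell_E0_lfR} by $\Delta t/2$ and \eqref{eqn:maxwell_H0_lfR} by $\Delta t$ before adding, the different multipliers reflecting that $p^{1/2},E^{1/2}$ are produced with the half increment $\Delta t/2$ while $H^1$ uses the full increment $\Delta t$. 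The extra scalars in the bootstrap equations --- the $\tfrac12$ on the leading coupling terms, and the $1/2^{\,2\sum k_\mathfrak{m}+1}$ and (in \eqref{eqn:maxwell_E0_lfR}) $1/2^{\,2\sum k_\mathfrak{m}}$ weights on the correction terms --- are exactly what is needed so that, after this rescaling, the $p$--$E$ and $E$--$H$ terms reappear with equal and opposite coefficients and cancel as before; e.g.\ $-\tfrac{\Delta t}{2}\cdot\tfrac12\langle H^1+H_0,\nabla\times\widetilde{E}\rangle$ from \eqref{eqn:maxwell_E0_lfR} meets $\Delta t\cdot\tfrac14\langle\nabla\times(E^{1/2}+E_0),\widetilde{H}\rangle$ from \eqref{eqn:maxwell_H0_lfR}, and at higher order $\tfrac{\Delta t}{2}\cdot 2^{-2\sum k_\mathfrak{m}}$ matches $\Delta t\cdot 2^{-(2\sum k_\mathfrak{m}+1)}$. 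This yields $\mathcal{E}^1=\mathcal{E}^0$, and telescoping, $\mathcal{E}^N=\mathcal{E}^{N-1}=\cdots=\mathcal{E}^1=\mathcal{E}^0$. The smallness of $\Delta t$ enters only to guarantee that each implicit update --- the linear system coupling $p^{n+1/2},E^{n+1/2},H^{n+1}$ --- is uniquely solvable, so that all the iterates above exist.

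The main obstacle is not analytic but a matter of careful bookkeeping: one must verify that the nested sums $\sum_{\mathfrak{a}}\sum_{k_\mathfrak{a}}$, carrying the weights $C_{k_1}\mathfrak{f}(\mathfrak{a})$, the signs $(-1)^{\sum k_\mathfrak{m}}$, and the material powers $(\mu\varepsilon)^{-\sum k_\mathfrak{m}}$, occur in the $p$-, $E$-, and $H$-equations in precisely the matched form required for term-by-term cancellation, and, in the bootstrap step, that the powers of two line up with the $\Delta t/2$ versus $\Delta t$ increments. No inequality enters the identity itself: every cancellation is exact, relying only on symmetry of the $L^2$ inner product and commutativity of the constants $\varepsilon$ and $\mu$ --- which is why $\mathcal{E}^N=\mathcal{E}^0$ holds as an equality rather than a bound.
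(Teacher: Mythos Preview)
Your proposal is correct and follows essentially the same approach as the paper: test the three scheme equations with $\varepsilon^{-1}(p^{n+1/2}+p^{n-1/2})$, $E^{n+1/2}+E^{n-1/2}$, $H^{n+1}+H^n$, observe that the time-difference terms produce the telescoping norm differences while all coupling and higher-order correction terms cancel pairwise by symmetry of the $L^2$ inner product, then repeat for the bootstrap step with the adjusted scalings. The only cosmetic difference is that the paper absorbs the $\Delta t$ and $2\Delta t$ (resp.\ $4\Delta t$ for $\widetilde H$ in the bootstrap) factors into the test functions rather than multiplying the equations afterward, which is equivalent to your choice of multiplying \eqref{eqn:maxwell_p0_lfR}, \eqref{eqn:maxwell_E0_lfR} by $\Delta t/2$ and \eqref{eqn:maxwell_H0_lfR} by $\Delta t$.
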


\begin{proof}
  Since Equations~\labelcref{eqn:maxwell_p_lfR,eqn:maxwell_E_lfR,eqn:maxwell_H_lfR} are true for all $\widetilde{p}\in \mathring{H}^1_{\varepsilon^{-1}}(\Omega)$, $\widetilde{E} \in \mathring{H}_{\varepsilon}(\curl; \Omega)$, $\widetilde{H} \in \mathring{H}_{\mu}(\divgn; \Omega)$, using $\widetilde{p} = 2 \Delta t \varepsilon^{-1} \left( p^{n + \frac{1}{2}} + p^{n - \frac{1}{2}} \right)$, $\widetilde{E} = 2 \Delta t \left(E^{n + \frac{1}{2}} + E^{n - \frac{1}{2}} \right)$ and $\widetilde{H} = 2 \Delta t \left(H^{n + 1} + H^n \right)$, we obtain:
  \begin{multline*}
    2 \aInnerproduct{p^{n+\frac{1}{2}} - p^{n - \frac{1}{2}}}{\varepsilon^{-1} \left( p^{n + \frac{1}{2}} + p^{n-\frac{1}{2}} \right)} - \Delta t \aInnerproduct{E^{n + \frac{1}{2}} + E^{n - \frac{1}{2}}}{\nabla \left( p^{n + \frac{1}{2}} + p^{n - \frac{1}{2}} \right)}  \\ + \Delta t \sum\limits_{\mathfrak{a} = 1}^{\frac{R}{2} - 1} \sum\limits_{k_\mathfrak{a} = 1}^{\frac{R}{2} - 1 - \sum\limits_{\mathfrak{b} = 1}^{\mathfrak{a} - 1} k_\mathfrak{b}} C_{k_1} \mathfrak{f}(\mathfrak{a}) \aInnerproduct{(\nabla \nabla \cdot)^{\sum\limits_{\mathfrak{m} = 1}^\mathfrak{a} k_\mathfrak{m}} \left( E^{n + \frac{1}{2}} + E^{n - \frac{1}{2}} \right)}{\nabla \left( p^{n + \frac{1}{2}} + p^{n - \frac{1}{2}} \right)} =0,
\end{multline*}
\vspace{-1em} \begin{multline*}
  \Delta t \aInnerproduct{\nabla \left( p^{n + \frac{1}{2}} + p^{n - \frac{1}{2}} \right)}{E^{n + \frac{1}{2}} + E^{n - \frac{1}{2}}} \\ - \Delta t \sum\limits_{\mathfrak{a} = 1}^{\frac{R}{2} - 1} \sum\limits_{k_\mathfrak{a} = 1}^{\frac{R}{2} - 1 - \sum\limits_{\mathfrak{b} = 1}^{\mathfrak{a} - 1} k_\mathfrak{b}} C_{k_1} \mathfrak{f}(\mathfrak{a})  \aInnerproduct{\nabla \left( p^{n + \frac{1}{2}} + p^{n - \frac{1}{2}} \right)}{(\nabla \nabla \cdot)^{\sum\limits_{\mathfrak{m} = 1}^\mathfrak{a} k_\mathfrak{m}} \left(E^{n + \frac{1}{2}} + E^{n - \frac{1}{2}}\right)} \\ + 2 \aInnerproduct{\varepsilon \left(E^{n + \frac{1}{2}} - E^{n - \frac{1}{2}} \right)}{E^{n + \frac{1}{2}} + E^{n - \frac{1}{2}}} \, -
  \Delta t \aInnerproduct{\left( H^{n + 1} + H^n \right)}{\nabla \times \left( E^{n + \frac{1}{2}} + E^{n - \frac{1}{2}} \right)} \\ +  \Delta t \sum\limits_{\mathfrak{a} = 1}^{\frac{R}{2} - 1} \sum\limits_{k_\mathfrak{a} = 1}^{\frac{R}{2} - 1 - \sum\limits_{\mathfrak{b} = 1}^{\mathfrak{a} - 1} k_\mathfrak{b}} C_{k_1} \mathfrak{f}(\mathfrak{a}) (-1)^{\sum\limits_{\mathfrak{m} = 1}^\mathfrak{a} k_\mathfrak{m}} \aInnerproduct{(\mu \varepsilon)^{-\sum\limits_{\mathfrak{m} = 1}^\mathfrak{a} k_\mathfrak{m}} (\nabla \times \nabla \times)^{\sum\limits_{\mathfrak{m} = 1}^\mathfrak{a} k_\mathfrak{m}} \left( H^{n + 1} + H^n \right)}{\nabla \times \left( E^{n + \frac{1}{2}} + E^{n - \frac{1}{2}} \right)} \\ = 0,
\end{multline*}
\vspace{-1em} \begin{multline*}
  2 \aInnerproduct{\mu \left(H^{n + 1} - H^{n} \right)}{H^{n + 1} + H^n} + \Delta t \aInnerproduct{\nabla \times \left( E^{n + \frac{1}{2}} + E^{n - \frac{1}{2}} \right)}{H^{n + 1} + H^n} \\ -  \Delta t \sum\limits_{\mathfrak{a} = 1}^{\frac{R}{2} - 1} \sum\limits_{k_\mathfrak{a} = 1}^{\frac{R}{2} - 1 - \sum\limits_{\mathfrak{b} = 1}^{\mathfrak{a} - 1} k_\mathfrak{b}} C_{k_1} \mathfrak{f}(\mathfrak{a}) (-1)^{\sum\limits_{\mathfrak{m} = 1}^\mathfrak{a} k_\mathfrak{m}} \aInnerproduct{(\varepsilon \mu)^{-\sum\limits_{\mathfrak{m} = 1}^\mathfrak{a} k_\mathfrak{m}} \nabla \times \left( E^{n + \frac{1}{2}} + E^{n - \frac{1}{2}} \right)}{(\nabla \times \nabla \times)^{\sum\limits_{\mathfrak{m} = 1}^\mathfrak{a} k_\mathfrak{m}} \left(H^{n + 1} + H^n \right)} \\ = 0.
\end{multline*}
Using the symmetry of the inner product and adding the above equations, we next obtain:
\begin{multline*} \begin{split}
  2 \left[ \aInnerproduct{\varepsilon^{-1} \left( p^{n + \frac{1}{2}} - p^{n - \frac{1}{2}} \right)}{p^{n + \frac{1}{2}} + p^{n - \frac{1}{2}}} + \aInnerproduct{\varepsilon \left(E^{n + \frac{1}{2}} - E^{n - \frac{1}{2}} \right)}{E^{n + \frac{1}{2}} + E^{n - \frac{1}{2}}} \right. \\ \left. + \aInnerproduct{\mu \left( H^{n + 1} - H^{n} \right)}{H^{n + 1} + H^n}\right] = 0. 
  \end{split}\\
  \implies \norm{p^{n + \frac{1}{2}}}^2_{\varepsilon^{-1}} - \norm{p^{n - \frac{1}{2}}}^2_{\varepsilon^{-1}} + \norm{E^{n + \frac{1}{2}}}^2_{\varepsilon} - \norm{E^{n - \frac{1}{2}}}^2_{\varepsilon} + \norm{H^{n+1}}^2_{\mu} -  \norm{H^n}^2_{\mu} = 0.
\end{multline*}
Summing over $n = 1$ to $N - 1$ results into:
\[
  \norm{p^{N - \frac{1}{2}}}^2_{\varepsilon^{-1}} - \norm{p^\frac{1}{2}}^2_{\varepsilon^{-1}} + \norm{E^{N - \frac{1}{2}}}^2_{\varepsilon} - \norm{E^\frac{1}{2}}^2_{\varepsilon} + \norm{H^N}^2_{\mu} - \norm{H^1}^2_{\mu} = 0.
\]
Now, $p^{\frac{1}{2}}$, $E^{\frac{1}{2}}$ and $H^1$ satisfy Equations~\labelcref{eqn:maxwell_p0_lfR,eqn:maxwell_E0_lfR,eqn:maxwell_H0_lfR}, and so using $\widetilde{p} = 2 \Delta t \varepsilon^{-1} \left( p^{\frac{1}{2}} + p^0 \right)$, $\widetilde{E} = 2 \Delta t \left(E^{\frac{1}{2}} + E^0 \right)$ and $\widetilde{H} = 4 \Delta t \left(H^{1} + H^0 \right)$ and repeating the arguments leads us to the following estimate:
\[
  \norm{p^{\frac{1}{2}}}^2_{\varepsilon^{-1}}  - \norm{p^0}^2_{\varepsilon^{-1}} + \norm{E^{\frac{1}{2}}}^2_{\varepsilon} - \norm{E^0}^2_{\varepsilon} + \norm{H^{1}}^2_{\mu} - \norm{H^0}^2_{\mu} = 0,
\]
from which we then get that:
\[
  \norm{p^{N - \frac{1}{2}}}^2_{\varepsilon^{-1}} + \norm{E^{N - \frac{1}{2}}}^2_{\varepsilon} + \norm{H^N}^2_{\mu} = \norm{p^0}^2_{\varepsilon^{-1}} + \norm{E^0}^2_{\varepsilon} + \norm{H^0}^2_{\mu}.
\]
\end{proof}

\begin{theorem}[Discrete Error Estimate]\label{thm:dscrt_error_estmt_lfR} For the semidiscretization using the LF$_R$ scheme as in Equations~\labelcref{eqn:maxwell_p_lfR,eqn:maxwell_E_lfR,eqn:maxwell_H_lfR}, and ~\labelcref{eqn:maxwell_p0_lfR,eqn:maxwell_E0_lfR,eqn:maxwell_H0_lfR}, for the solution $(p, E, H)$ of Equations~\labelcref{eqn:maxwell_p_wf,eqn:maxwell_E_wf,eqn:maxwell_H_wf} with initial conditions as in Equation~\eqref{eqn:ICs}, assuming sufficient regularity with $p \in C^{R+1}(0, T; \mathring{H}^1_{\varepsilon^{-1}}(\Omega))$, $E \in C^{R+1}(0, T; \mathring{H}_{\varepsilon}(\curl; \Omega))$, and $H \in C^{R+1}(0, T; \mathring{H}_{\mu}(\divgn; \Omega))$, and for a time step $\Delta t > 0$ sufficiently small, there exists a positive bounded constant $C$ independent of $\Delta t$ such that: \[
  \norm{e_p^{N - \frac{1}{2}}}_{\varepsilon^{-1}} + \norm{e_E^{N - \frac{1}{2}}}_{\varepsilon} + \norm{e_H^N}_{\mu} \le C \left[ \left(\Delta t\right)^R + \norm{e_p^0}_{\varepsilon^{-1}} + \norm{e_E^0}_{\varepsilon} + \norm{e_H^0}_{\mu} \right],
\]
where $e_p^{n + \frac{1}{2}} \coloneq p(t^{n + \frac{1}{2}}) - p^{n+\frac{1}{2}}$, $e_E^{n + \frac{1}{2}} \coloneq E(t^{n + \frac{1}{2}}) - E^{n + \frac{1}{2}}$ and $e_H^n \coloneq H(t^n) - H^n$ are the errors in the time semidiscretization of $p$, $E$ and $H$, respectively.
\end{theorem}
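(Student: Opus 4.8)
The plan is to derive the error equation satisfied by the time-semidiscretization errors, recognize its right-hand side as a consistency (truncation) functional, and then re-run the energy identity of Theorem~\ref{thm:dscrt_enrgy_estmt_lfR} on that error equation. Because the proof of that theorem rests entirely on exact cancellations — the spatial coupling terms together with every one of the higher-order correction sums drop out once the matched test functions are inserted — the error equation carries no ``reaction'' term; consequently no discrete Gr\"onwall inequality is needed, only a telescoping of the square root of the discrete energy.

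First I would substitute the exact solution $(p,E,H)$ of~\labelcref{eqn:maxwell_p_wf,eqn:maxwell_E_wf,eqn:maxwell_H_wf} into~\labelcref{eqn:maxwell_p_lfR,eqn:maxwell_E_lfR,eqn:maxwell_H_lfR} and denote by $\tau_p^n,\tau_E^n,\tau_H^n$ the resulting (generally nonzero) residual linear functionals in $\widetilde p,\widetilde E,\widetilde H$. Since the discrete triple satisfies those equations identically, subtracting shows that the errors $e_p^{n\pm\frac12},e_E^{n\pm\frac12},e_H^{n},e_H^{n+1}$ satisfy the same LF$_R$ system with right-hand sides $\tau_p^n,\tau_E^n,\tau_H^n$, and the bootstrap errors satisfy~\labelcref{eqn:maxwell_p0_lfR,eqn:maxwell_E0_lfR,eqn:maxwell_H0_lfR} with the analogous bootstrap residuals. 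Taking $\widetilde p=2\Delta t\,\varepsilon^{-1}(e_p^{n+\frac12}+e_p^{n-\frac12})$, $\widetilde E=2\Delta t(e_E^{n+\frac12}+e_E^{n-\frac12})$, $\widetilde H=2\Delta t(e_H^{n+1}+e_H^{n})$ exactly as in Theorem~\ref{thm:dscrt_enrgy_estmt_lfR}, adding the three error equations and invoking the same cancellations, we obtain for $n\ge1$
\[
  2\bigl(\mathcal{E}_e^{n+1}-\mathcal{E}_e^{n}\bigr)=\tau_p^n(\widetilde p)+\tau_E^n(\widetilde E)+\tau_H^n(\widetilde H),\qquad \mathcal{E}_e^{m}\coloneq\norm{e_p^{m-\frac12}}_{\varepsilon^{-1}}^{2}+\norm{e_E^{m-\frac12}}_{\varepsilon}^{2}+\norm{e_H^{m}}_{\mu}^{2}.
\]

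The crux is the consistency bound $|\tau_p^n(\widetilde p)|+|\tau_E^n(\widetilde E)|+|\tau_H^n(\widetilde H)|\le C\,\Delta t^{R+1}\bigl(\sqrt{\mathcal{E}_e^{n+1}}+\sqrt{\mathcal{E}_e^{n}}\bigr)$. To obtain it I would expand the nodal values of $(p,E,H)$ in Taylor series about the appropriate time level and repeatedly apply the Maxwell relations $\partial_t p=-\nabla\cdot(\varepsilon E)$, $\varepsilon\partial_t E=\nabla\times H-\nabla p$, $\mu\partial_t H=-\nabla\times E$ to convert every even-order time derivative of a field into iterated spatial operators — schematically, $\partial_t^{2j}E$ becomes a combination of $(\nabla\times\nabla\times)^{j}E$ and $(\nabla\nabla\cdot)^{j}E$, while $\partial_t^{2j}H$ becomes a multiple of $(\nabla\times\nabla\times)^{j}H$. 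Two algebraic facts then drive the cancellation: the orthogonality $\ainnerproduct{\nabla\times w}{\nabla\phi}=0$ under the boundary conditions~\eqref{eqn:BCs}, and the annihilations $(\nabla\times\nabla\times)(\nabla\nabla\cdot)=0=(\nabla\nabla\cdot)(\nabla\times\nabla\times)$, which force only powers of $\nabla\nabla\cdot$ to survive in the $p$-equation residual, only powers of $\nabla\times\nabla\times$ in the $H$-equation residual, and cause the $E$-equation residual to split into these two independent families. By the construction of LF$_R$, the weights $C_{k_1}$ and $\mathfrak{f}(\mathfrak{a})$ of~\eqref{eqn:fa_Ck1_value} are precisely those for which every contribution of $\Delta t$-order below $R$ then cancels (cf.~\cite{ArKa2025,ArKa2026}), leaving residuals equal to a single gradient or curl of the smooth exact solution multiplied by $\Delta t^{R}$. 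One more integration by parts moves that derivative off the test function and onto the (sufficiently regular) exact solution, again using~\eqref{eqn:BCs}, so that each $\tau_\bullet^n$ is the inner product of a smooth $O(\Delta t^{R})$ function with the corresponding error-dependent test function; the bound then follows by Cauchy--Schwarz and the factor $\Delta t$ built into those test functions. The bootstrap residuals are estimated in the same way — the extra factors $2^{-2\sum_{\mathfrak m}k_{\mathfrak m}}$ in~\labelcref{eqn:maxwell_p0_lfR,eqn:maxwell_E0_lfR,eqn:maxwell_H0_lfR} being exactly the rescaled half-step Taylor coefficients — so they are likewise $O(\Delta t^{R+1})$.

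Granting the consistency bound, the estimate follows without Gr\"onwall. From the displayed identity, $2(\mathcal{E}_e^{n+1}-\mathcal{E}_e^{n})\le 2C\Delta t^{R+1}\bigl(\sqrt{\mathcal{E}_e^{n+1}}+\sqrt{\mathcal{E}_e^{n}}\bigr)$, so dividing by $\sqrt{\mathcal{E}_e^{n+1}}+\sqrt{\mathcal{E}_e^{n}}$ (the inequality being trivial when that sum vanishes) gives $\sqrt{\mathcal{E}_e^{n+1}}-\sqrt{\mathcal{E}_e^{n}}\le C\Delta t^{R+1}$; summing over $n=1,\dots,N-1$ with $N\Delta t=T$ yields $\sqrt{\mathcal{E}_e^{N}}\le\sqrt{\mathcal{E}_e^{1}}+CT\Delta t^{R}$, and the bootstrap step (with test functions rescaled as in Theorem~\ref{thm:dscrt_enrgy_estmt_lfR}) supplies $\sqrt{\mathcal{E}_e^{1}}\le\sqrt{\mathcal{E}_e^{0}}+C\Delta t^{R}$ where $\mathcal{E}_e^{0}=\norm{e_p^{0}}_{\varepsilon^{-1}}^{2}+\norm{e_E^{0}}_{\varepsilon}^{2}+\norm{e_H^{0}}_{\mu}^{2}$, whence $\sqrt{\mathcal{E}_e^{0}}\le\norm{e_p^{0}}_{\varepsilon^{-1}}+\norm{e_E^{0}}_{\varepsilon}+\norm{e_H^{0}}_{\mu}$. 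Chaining these and using $a+b+c\le\sqrt{3}\,(a^{2}+b^{2}+c^{2})^{1/2}$ on the left-hand side gives the claim. I expect the sole real obstacle to be the consistency bound: carefully tracking the nested sums in~\labelcref{eqn:maxwell_p_lfR,eqn:maxwell_E_lfR,eqn:maxwell_H_lfR} through the Taylor and Maxwell substitutions and confirming that $C_{k_1}\mathfrak{f}(\mathfrak{a})$ annihilate all orders below $\Delta t^{R}$; everything downstream is a line-by-line reprise of the proof of Theorem~\ref{thm:dscrt_enrgy_estmt_lfR} with a forcing term.
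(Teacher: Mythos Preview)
Your proposal is correct and follows the same skeleton as the paper's proof --- derive the error equations, insert the energy test functions from Theorem~\ref{thm:dscrt_enrgy_estmt_lfR}, exploit the cancellations, and control the residuals --- but you diverge from the paper in two places worth noting.

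First, the paper does not argue the consistency bound via ``the weights $C_{k_1}\mathfrak{f}(\mathfrak{a})$ annihilate all orders below $\Delta t^R$.'' Instead it writes down explicit Taylor remainders: the $(R{+}1)$-th order remainders $R_p^n,R_E^n,R_H^{n+1/2}$ for the difference quotients, and a family of $S$-th order remainders $r_{S/2,\bullet}^n$ (for every even $S\le R$) for the midpoint averages, then substitutes the exact PDE at $t^n$ and $t^{n+1/2}$ to identify the residual as a concrete linear combination of these. Each piece is then bounded directly, e.g.\ $\|R_p^n\|_{\varepsilon^{-1}}^2\le (R!)^{-2}(2R{+}1)^{-1}\Delta t^{2R-1}\int_{t^{n-1}}^{t^n}\|\partial_t^{R+1}p\|^2$, and the factor $\mathfrak{f}(\mathfrak{a})\sim\Delta t^{2\sum k_{\mathfrak m}}$ multiplying $r_{(R/2-\sum k_{\mathfrak m}),\bullet}^n\sim\Delta t^{R-2\sum k_{\mathfrak m}}$ restores the full order. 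Your description of the mechanism (Maxwell relations plus $\nabla\times(\nabla\nabla\cdot)=0$, etc.) is the \emph{design rationale} for the scheme; the paper's verification is more pedestrian but fully explicit. Either route is fine, but be aware that actually executing yours requires the same bookkeeping of nested sums that the paper carries out.

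Second, and more interestingly, you avoid Gr\"onwall entirely. The paper applies Young's inequality to the residual--test-function pairing, which produces a $\Delta t\,\mathcal{E}_e^{n+1}$ term on the right that forces a discrete Gr\"onwall step and an $\exp(4T)$ constant. Your route --- keep Cauchy--Schwarz as a product, divide by $\sqrt{\mathcal{E}_e^{n+1}}+\sqrt{\mathcal{E}_e^{n}}$, telescope --- is sharper, giving a constant linear in $T$. This is a genuine improvement over the paper's argument, and it works here precisely because (as you note) the LF$_R$ error equation has no zeroth-order reaction term once the spatial couplings cancel.
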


\begin{proof}
Using the Taylor remainder theorem upto $(R+1)^{\text{th}}$ order, and expressing $p(t)$ about $t = t^n$, we have that:
\begin{multline*}
  p(t) = p(t^n) + \dfrac{\partial p}{\partial t}(t^n)(t - t^n) + \dfrac{\partial^2 p}{\partial t^2}(t^n) \dfrac{(t - t^n)^2}{2!} + \dotso + \dfrac{\partial^{R-1} p}{\partial t^{R-1}}(t^n) \dfrac{(t - t^n)^{R-1}}{(R-1)!} + \dfrac{\partial^R p}{\partial t^R}(t^n) \dfrac{(t - t^n)^R}{R!} \\ + \int\limits_{t^n}^{t} \dfrac{(t - s)^R}{R!} \dfrac{\partial^{R+1} p}{\partial t^{R+1}}(s) ds,
\end{multline*}
which when evaluated at $t = t^{n + \frac{1}{2}}$ and $t = t^{n - \frac{1}{2}}$ yields:
\begin{multline*}
  p(t^{n + \frac{1}{2}}) = p(t^n) + \dfrac{\Delta t}{2} \dfrac{\partial p}{\partial t}(t^n) +\dfrac{1}{2!} \dfrac{\Delta t^2}{2^2}\dfrac{\partial^2 p}{\partial t^2}(t^n) + \dotso + \dfrac{1}{(R-1)!} \dfrac{\Delta t^{R-1}}{2^{R-1}}\dfrac{\partial^{R-1} p}{\partial t^{R-1}}(t^n) \\ + \dfrac{1}{R!} \dfrac{\Delta t^R}{2^R} \dfrac{\partial^R p}{\partial t^R}(t^n) + \int\limits_{t^n}^{\mathclap{t^{n + \frac{1}{2}}}} \dfrac{(t^{n + \frac{1}{2}} - s)^R}{R!} \dfrac{\partial^{R+1} p}{\partial t^{R+1}}(s) ds, 
 \end{multline*}
\begin{multline*}
p(t^{n - \frac{1}{2}}) = p(t^n) - \dfrac{\Delta t}{2} \dfrac{\partial p}{\partial t}(t^n) +\dfrac{1}{2!} \dfrac{\Delta t^2}{2^2}\dfrac{\partial^2 p}{\partial t^2}(t^n) + \dotso - \dfrac{1}{(R-1)!} \dfrac{\Delta t^{R-1}}{2^{R-1}}\dfrac{\partial^{R-1} p}{\partial t^{R-1}}(t^n) \\ + \dfrac{1}{R!} \dfrac{\Delta t^R}{2^R} \dfrac{\partial^R p}{\partial t^R}(t^n) + \int\limits_{t^n}^{\mathclap{t^{n - \frac{1}{2}}}} \dfrac{(t^{n - \frac{1}{2}} - s)^R}{R!} \dfrac{\partial^{R+1} p}{\partial t^{R+1}}(s) ds.
\end{multline*}
Subtracting these two equations, and using the result in the inner product term from the semidiscretization of the variational formulation leads to:
\[
  \ainnerproduct{\dfrac{p(t^{n + \frac{1}{2}}) - p(t^{n - \frac{1}{2}})}{\Delta t}}{\widetilde{p}} = \sum \limits_{k=0}^{\frac{R}{2} - 1} \dfrac{1}{(2k+1)!} \dfrac{\Delta t^{2k}}{2^{2k}} \ainnerproduct{\dfrac{\partial^{2k+1} p}{\partial t^{2k+1}}(t^n)}{\widetilde{p}} + \ainnerproduct{R^n_p}{\widetilde{p}},
\]
in which we have defined that:
\[
  R^n_p \coloneq \dfrac{1}{\Delta t} \left[\int\limits_{t^{n - \frac{1}{2}}}^{t^n} \dfrac{(t^{n - \frac{1}{2}} - s)^R}{R!} \dfrac{\partial^{R+1} p}{\partial t^{R+1}}(s) ds + \int\limits_{t^n}^{t^{n + \frac{1}{2}}} \dfrac{(t^{n + \frac{1}{2}} - s)^R}{R!} \dfrac{\partial^{R+1} p}{\partial t^{R+1}}(s) ds \right].
\]
Similarly, for $E$ and $H$, we have the following:
\begin{align*}
  \ainnerproduct{\varepsilon \dfrac{E(t^{n + \frac{1}{2}}) - E(t^{n - \frac{1}{2}})}{\Delta t}}{\widetilde{E}} &= \sum \limits_{k=0}^{\frac{R}{2} - 1} \dfrac{1}{(2k+1)!} \dfrac{\Delta t^{2k}}{2^{2k}} \ainnerproduct{\varepsilon \dfrac{\partial^{2k+1} E}{\partial t^{2k+1}}(t^n)}{\widetilde{E}} + \ainnerproduct{\varepsilon R^n_E}{\widetilde{E}}, \\
  \ainnerproduct{\mu \dfrac{H(t^{n + 1}) - H(t^n)}{\Delta t}}{\widetilde{H}} &=  \sum \limits_{k=0}^{\frac{R}{2} - 1} \dfrac{1}{(2k+1)!} \dfrac{\Delta t^{2k}}{2^{2k}} \ainnerproduct{\mu \dfrac{\partial^{2k+1} H}{\partial t^{2k+1}}(t^{n + \frac{1}{2}})}{\widetilde{H}} + \ainnerproduct{\mu R^{n + \frac{1}{2}}_H}{\widetilde{H}},
\end{align*}
and in each of which we have defined that:
\begin{align*}
  R^n_E &\coloneq \dfrac{1}{\Delta t} \left[ \int\limits_{t^{n - \frac{1}{2}}}^{t^n} \dfrac{(t^{n - \frac{1}{2}} - s)^R}{R!} \dfrac{\partial^{R+1} E}{\partial t^{R+1}}(s) ds + \int\limits_{t^n}^{t^{n + \frac{1}{2}}} \dfrac{(t^{n + \frac{1}{2}} - s)^R}{R!} \dfrac{\partial^{R+1} E}{\partial t^{R+1}}(s) ds \right], \\
R^{n + \frac{1}{2}}_H &\coloneq \dfrac{1}{\Delta t} \left[ \int\limits_{t^n}^{t^{n + \frac{1}{2}}} \dfrac{(t^n - s)^R}{R!} \dfrac{\partial^{R+1} H}{\partial t^{R+1}}(s) ds + \int\limits_{t^{n + \frac{1}{2}}}^{t^{n + 1}} \dfrac{(t^{n + 1} - s)^R}{R!} \dfrac{\partial^{R+1} H}{\partial t^{R+1}}(s) ds\right].
\end{align*}
Again using the Taylor remainder theorem upto $S^{\text{th}}$ order for $2 \leqslant S \leqslant R$, for $S$ taken to be even, we have that:

\begin{align*}
  \ainnerproduct{\dfrac{p(t^{n + \frac{1}{2}}) - p(t^{n - \frac{1}{2}})}{\Delta t}}{\widetilde{p}} & = \sum \limits_{k=0}^{\frac{S}{2} - 1} \dfrac{1}{(2k)!} \dfrac{\Delta t^{2k}}{2^{2k}} \ainnerproduct{\dfrac{\partial^{2k} p}{\partial t^{2k}}(t^n)}{\widetilde{p}} + \ainnerproduct{r^n_{S/2,p}}{\widetilde{p}}, \\
  \ainnerproduct{\varepsilon \dfrac{E(t^{n + \frac{1}{2}}) - E(t^{n - \frac{1}{2}})}{\Delta t}}{\widetilde{E}} &= \sum \limits_{k=0}^{\frac{S}{2} - 1} \dfrac{1}{(2k)!} \dfrac{\Delta t^{2k}}{2^{2k}} \ainnerproduct{\varepsilon \dfrac{\partial^{2k} E}{\partial t^{2k}}(t^n)}{\widetilde{E}} + \ainnerproduct{\varepsilon r^n_{S/2,E}}{\widetilde{E}}, \\
  \ainnerproduct{\mu \dfrac{H(t^{n + 1}) - H(t^n)}{\Delta t}}{\widetilde{H}} &=  \sum \limits_{k=0}^{\frac{S}{2} - 1} \dfrac{1}{(2k)!} \dfrac{\Delta t^{2k}}{2^{2k}} \ainnerproduct{\mu \dfrac{\partial^{2k} H}{\partial t^{2k}}(t^{n + \frac{1}{2}})}{\widetilde{H}} + \ainnerproduct{\mu r^{n + \frac{1}{2}}_{S/2,H}}{\widetilde{H}},
\end{align*}
and in each of which we have defined that:
\begin{align*}
 r^n_{S/2,p} &\coloneq \dfrac{1}{2} \left[-\int\limits_{t^{n - \frac{1}{2}}}^{t^n} \dfrac{(t^{n - \frac{1}{2}} - s)^{S-1}}{(S-1)!} \dfrac{\partial^{S} p}{\partial t^{S}}(s) ds + \int\limits_{t^n}^{t^{n + \frac{1}{2}}} \dfrac{(t^{n + \frac{1}{2}} - s)^{S-1}}{(S-1)!} \dfrac{\partial^{S} p}{\partial t^{S}}(s) ds \right], \\
  r^n_{S/2,E} &\coloneq \dfrac{1}{2} \left[-\int\limits_{t^{n - \frac{1}{2}}}^{t^n} \dfrac{(t^{n - \frac{1}{2}} - s)^{S-1}}{(S-1)!} \dfrac{\partial^{S} E}{\partial t^{S}}(s) ds + \int\limits_{t^n}^{t^{n + \frac{1}{2}}} \dfrac{(t^{n + \frac{1}{2}} - s)^{S-1}}{(S-1)!} \dfrac{\partial^{S} E}{\partial t^{S}}(s) ds \right], \\
r^{n + \frac{1}{2}}_{S/2,H} &\coloneq \dfrac{1}{2} \left[-\int\limits_{t^n}^{t^{n + \frac{1}{2}}} \dfrac{(t^n - s)^{S-1}}{(S-1)!} \dfrac{\partial^{S} H}{\partial t^{S}}(s) ds + \int\limits_{t^{n + \frac{1}{2}}}^{t^{n + 1}} \dfrac{(t^{n + 1} - s)^{S-1}}{(S-1)!} \dfrac{\partial^{S} H}{\partial t^{S}}(s) ds\right].
\end{align*}
Using these terms in the weak formulation as in Equations~\labelcref{eqn:maxwell_p_wf,eqn:maxwell_E_wf,eqn:maxwell_H_wf}  at time $t = t^n$ for $p$ and $E$ terms, and at time $t = t^{n + \frac{1}{2}}$ for $H$, we obtain:
\begin{subequations}
\begin{multline}
  \aInnerproduct{\dfrac{p(t^{n + \frac{1}{2}}) - p(t^{n - \frac{1}{2}})}{\Delta t}}{\widetilde{p}} - \aInnerproduct{\dfrac{\varepsilon}{2} \left( E(t^{n + \frac{1}{2}}) + E(t^{n - \frac{1}{2}}) \right)}{\nabla \widetilde{p}} \\ + \sum\limits_{\mathfrak{a} = 1}^{\frac{R}{2} - 1} \sum\limits_{k_\mathfrak{a} = 1}^{\frac{R}{2} - 1 - \sum\limits_{\mathfrak{b} = 1}^{\mathfrak{a} - 1} k_\mathfrak{b}} C_{k_1} \mathfrak{f}(\mathfrak{a}) \aInnerproduct{\dfrac{\varepsilon}{2} (\nabla \nabla \cdot)^{\sum\limits_{\mathfrak{m} = 1}^\mathfrak{a} k_\mathfrak{m}} \left( E(t^{n + \frac{1}{2}}) + E(t^{n - \frac{1}{2}}) \right)}{\nabla \widetilde{p}} \\ =\aInnerproduct{R_p^{n} + \varepsilon \nabla \cdot r_{R/2,E}^n - \sum\limits_{\mathfrak{a} = 1}^{\frac{R}{2} - 1} \sum\limits_{k_\mathfrak{a} = 1}^{\frac{R}{2} - 1 - \sum\limits_{\mathfrak{b} = 1}^{\mathfrak{a} - 1} k_\mathfrak{b}} C_{k_1} \mathfrak{f}(\mathfrak{a}) \varepsilon \nabla \cdot (\nabla \nabla \cdot)^{\sum\limits_{\mathfrak{m} = 1}^\mathfrak{a} k_\mathfrak{m}} r_{\left( R/2 - \sum \limits_{m=1}^a k_m \right),E}^n }{\widetilde{p}}, \label{eqn:remainder_p_lfR}
  \end{multline} \\
  \begin{multline}
   \aInnerproduct{\dfrac{1}{2} \nabla \left(p(t^{n + \frac{1}{2}}) + p(t^{n - \frac{1}{2}}) \right)}{\widetilde{E}} - \sum\limits_{\mathfrak{a} = 1}^{\frac{R}{2} - 1} \sum\limits_{k_\mathfrak{a} = 1}^{\frac{R}{2} - 1 - \sum\limits_{\mathfrak{b} = 1}^{\mathfrak{a} - 1} k_\mathfrak{b}} C_{k_1} \mathfrak{f}(\mathfrak{a}) \aInnerproduct{\dfrac{1}{2} \nabla \left(p(t^{n + \frac{1}{2}}) + p(t^{n - \frac{1}{2}}) \right)}{\left( \nabla \nabla \cdot \right)^{\sum\limits_{\mathfrak{m} = 1}^\mathfrak{a} k_\mathfrak{m}} \widetilde{E}} \\ + \aInnerproduct{\varepsilon \dfrac{E(t^{n + \frac{1}{2}}) - E(t^{n - \frac{1}{2}})}{\Delta t}}{\widetilde{E}} -  \aInnerproduct{\dfrac{1}{2} \left( H(t^{n + 1}) + H(t^n) \right)}{\nabla \times \widetilde{E}} \\ + \sum\limits_{\mathfrak{a} = 1}^{\frac{R}{2} - 1} \sum\limits_{k_\mathfrak{a} = 1}^{\frac{R}{2} - 1 - \sum\limits_{\mathfrak{b} = 1}^{\mathfrak{a} - 1} k_\mathfrak{b}} C_{k_1} \mathfrak{f}( \mathfrak{a}) (-1)^{\sum\limits_{\mathfrak{m} = 1}^\mathfrak{a} k_\mathfrak{m}} \aInnerproduct{\dfrac{1}{2} ( \mu \varepsilon )^{-\sum\limits_{\mathfrak{m} = 1}^\mathfrak{a} k_\mathfrak{m}} \left( \nabla \times \nabla \times \right)^{\sum\limits_{\mathfrak{m} = 1}^\mathfrak{a} k_\mathfrak{m}} \left( H(t^{n + 1}) + H(t^n) \right)}{\nabla \times \widetilde{E}} \\ = \aInnerproduct{\varepsilon R_E^n + \nabla r_{R/2,p}^n - \nabla \times r_{R/2,H}^{n+\frac{1}{2}} - \sum\limits_{\mathfrak{a} = 1}^{\frac{R}{2} - 1} \sum\limits_{k_\mathfrak{a} = 1}^{\frac{R}{2} - 1 - \sum\limits_{\mathfrak{b} = 1}^{\mathfrak{a} - 1} k_\mathfrak{b}} C_{k_1} \mathfrak{f}(\mathfrak{a}) \left(\varepsilon \left( \nabla \nabla \cdot \right)^{\sum\limits_{\mathfrak{m} = 1}^\mathfrak{a} k_\mathfrak{m}} \nabla r_{\left( R/2 - \sum \limits_{m=1}^a k_m \right),p}^n \right. \\ - \left. (-1)^{\sum\limits_{\mathfrak{m} = 1}^\mathfrak{a} k_\mathfrak{m}} (\mu \varepsilon)^{-\sum\limits_{\mathfrak{m} = 1}^\mathfrak{a} k_\mathfrak{m}} \nabla \times (\nabla \times \nabla \times)^{\sum\limits_{\mathfrak{m} = 1}^\mathfrak{a} k_\mathfrak{m}} r_{\left( R/2 - \sum \limits_{m=1}^a k_m \right),H}^{n+\frac{1}{2}}\right)}{\widetilde{E}}, \label{eqn:remainder_E_lfR}
     \end{multline} \\
      \begin{multline}
  \aInnerproduct{\mu \dfrac{H(t^{n + 1}) - H(t^n)}{\Delta t}}{\widetilde{H}} +  \aInnerproduct{\dfrac{1}{2} \nabla \times \left( E(t^{n + \frac{1}{2}}) + E(t^{n - \frac{1}{2}}) \right)}{\widetilde{H}} \\ - \sum\limits_{\mathfrak{a} = 1}^{\frac{R}{2} - 1} \sum\limits_{k_\mathfrak{a} = 1}^{\frac{R}{2} - 1 - \sum\limits_{\mathfrak{b} = 1}^{\mathfrak{a} - 1} k_\mathfrak{b}} C_{k_1} \mathfrak{f}(\mathfrak{a}) (-1)^{\sum\limits_{\mathfrak{m} = 1}^\mathfrak{a} k_\mathfrak{m}} \aInnerproduct{\dfrac{1}{2} (\varepsilon \mu)^{-\sum\limits_{\mathfrak{m} = 1}^\mathfrak{a} k_\mathfrak{m}} \nabla \times \left( E(t^{n + \frac{1}{2}}) + E(t^{n - \frac{1}{2}}) \right)}{\left( \nabla \times  \nabla \times \right)^{\sum\limits_{\mathfrak{m} = 1}^\mathfrak{a} k_\mathfrak{m}} \widetilde{H}} \\ = \aInnerproduct{\mu R_H^{n + \frac{1}{2}} + \nabla \times r_{R/2,E}^n \\ + \sum\limits_{\mathfrak{a} = 1}^{\frac{R}{2} - 1} \sum\limits_{k_\mathfrak{a} = 1}^{\frac{R}{2} - 1 - \sum\limits_{\mathfrak{b} = 1}^{\mathfrak{a} - 1} k_\mathfrak{b}} C_{k_1} \mathfrak{f}(\mathfrak{a}) (-1)^{\sum\limits_{\mathfrak{m} = 1}^\mathfrak{a} k_\mathfrak{m}} (\varepsilon \mu)^{-\sum\limits_{\mathfrak{m} = 1}^\mathfrak{a} k_\mathfrak{m}} (\nabla \times \nabla \times)^{\sum\limits_{\mathfrak{m} = 1}^\mathfrak{a} k_\mathfrak{m}} \nabla \times r_{\left( R/2 - \sum \limits_{m=1}^a k_m \right),E}^n}{\widetilde{H}}. \label{eqn:remainder_H_lfR}
       \end{multline}
\end{subequations}
Then subtracting the LF$_R$ scheme expressions as in Equations~\labelcref{eqn:maxwell_p_lfR,eqn:maxwell_E_lfR,eqn:maxwell_H_lfR} leads us to the following set of equations:
\begin{multline*}
  \aInnerproduct{\dfrac{e_p^{n + \frac{1}{2}} - e_p^{n - \frac{1}{2}}}{\Delta t}}{\widetilde{p}} - \aInnerproduct{\dfrac{\varepsilon}{2} \left( e_E^{n + \frac{1}{2}} + e_E^{n - \frac{1}{2}} \right)}{\nabla \widetilde{p}} \\ + \sum\limits_{\mathfrak{a} = 1}^{\frac{R}{2} - 1} \sum\limits_{k_\mathfrak{a} = 1}^{\frac{R}{2} - 1 - \sum\limits_{\mathfrak{b} = 1}^{\mathfrak{a} - 1} k_\mathfrak{b}} C_{k_1} \mathfrak{f}(\mathfrak{a}) \aInnerproduct{\dfrac{\varepsilon}{2} (\nabla \nabla \cdot)^{\sum\limits_{\mathfrak{m} = 1}^\mathfrak{a} k_\mathfrak{m}} \left( e_E^{n + \frac{1}{2}} + e_E^{n - \frac{1}{2}} \right)}{\nabla \widetilde{p}} \\ = \aInnerproduct{R_p^{n} + \varepsilon \nabla \cdot r_{R/2,E}^n - \sum\limits_{\mathfrak{a} = 1}^{\frac{R}{2} - 1} \sum\limits_{k_\mathfrak{a} = 1}^{\frac{R}{2} - 1 - \sum\limits_{\mathfrak{b} = 1}^{\mathfrak{a} - 1} k_\mathfrak{b}} C_{k_1} \mathfrak{f}(\mathfrak{a}) \varepsilon \nabla \cdot (\nabla \nabla \cdot)^{\sum\limits_{\mathfrak{m} = 1}^\mathfrak{a} k_\mathfrak{m}} r_{\left( R/2 - \sum \limits_{m=1}^a k_m \right),E}^n }{\widetilde{p}},
  \end{multline*}
  \begin{multline*}
   \aInnerproduct{\dfrac{1}{2} \nabla \left(e_p^{n + \frac{1}{2}} + e_p^{n - \frac{1}{2}} \right)}{\widetilde{E}} - \sum\limits_{\mathfrak{a} = 1}^{\frac{R}{2} - 1} \sum\limits_{k_\mathfrak{a} = 1}^{\frac{R}{2} - 1 - \sum\limits_{\mathfrak{b} = 1}^{\mathfrak{a} - 1} k_\mathfrak{b}} C_{k_1} \mathfrak{f}(\mathfrak{a}) \aInnerproduct{\dfrac{1}{2} \nabla \left(e_p^{n + \frac{1}{2}} + e_p^{n - \frac{1}{2}} \right)}{\left( \nabla \nabla \cdot \right)^{\sum\limits_{\mathfrak{m} = 1}^\mathfrak{a} k_\mathfrak{m}} \widetilde{E}} \\ + \aInnerproduct{\varepsilon \dfrac{e_E^{n + \frac{1}{2}} - e_E^{n - \frac{1}{2}}}{\Delta t}}{\widetilde{E}} -  \aInnerproduct{\dfrac{1}{2} \left( e_H^{n + 1} + e_H^n \right)}{\nabla \times \widetilde{E}} \\ + \sum\limits_{\mathfrak{a} = 1}^{\frac{R}{2} - 1} \sum\limits_{k_\mathfrak{a} = 1}^{\frac{R}{2} - 1 - \sum\limits_{\mathfrak{b} = 1}^{\mathfrak{a} - 1} k_\mathfrak{b}} C_{k_1} \mathfrak{f}( \mathfrak{a}) (-1)^{\sum\limits_{\mathfrak{m} = 1}^\mathfrak{a} k_\mathfrak{m}} \aInnerproduct{\dfrac{1}{2} ( \mu \varepsilon )^{-\sum\limits_{\mathfrak{m} = 1}^\mathfrak{a} k_\mathfrak{m}} \left( \nabla \times \nabla \times \right)^{\sum\limits_{\mathfrak{m} = 1}^\mathfrak{a} k_\mathfrak{m}} \left( e_H^{n + 1} + e_H^n \right)}{\nabla \times \widetilde{E}} \\ =  \aInnerproduct{\varepsilon R_E^n + \nabla r_{R/2,p}^n - \nabla \times r_{R/2,H}^{n+\frac{1}{2}} - \sum\limits_{\mathfrak{a} = 1}^{\frac{R}{2} - 1} \sum\limits_{k_\mathfrak{a} = 1}^{\frac{R}{2} - 1 - \sum\limits_{\mathfrak{b} = 1}^{\mathfrak{a} - 1} k_\mathfrak{b}} C_{k_1} \mathfrak{f}(\mathfrak{a}) \left(\varepsilon \left( \nabla \nabla \cdot \right)^{\sum\limits_{\mathfrak{m} = 1}^\mathfrak{a} k_\mathfrak{m}} \nabla r_{\left( R/2 - \sum \limits_{m=1}^a k_m \right),p}^n \right. \\ - \left. (-1)^{\sum\limits_{\mathfrak{m} = 1}^\mathfrak{a} k_\mathfrak{m}} (\mu \varepsilon)^{-\sum\limits_{\mathfrak{m} = 1}^\mathfrak{a} k_\mathfrak{m}} \nabla \times (\nabla \times \nabla \times)^{\sum\limits_{\mathfrak{m} = 1}^\mathfrak{a} k_\mathfrak{m}} r_{\left( R/2 - \sum \limits_{m=1}^a k_m \right),H}^{n+\frac{1}{2}}\right)}{\widetilde{E}},
     \end{multline*}
      \begin{multline*}
  \aInnerproduct{\mu \dfrac{e_H^{n + 1} - e_H^n}{\Delta t}}{\widetilde{H}} +  \aInnerproduct{\dfrac{1}{2} \nabla \times \left( e_E^{n + \frac{1}{2}} + e_E^{n - \frac{1}{2}} \right)}{\widetilde{H}} \\ - \sum\limits_{\mathfrak{a} = 1}^{\frac{R}{2} - 1} \sum\limits_{k_\mathfrak{a} = 1}^{\frac{R}{2} - 1 - \sum\limits_{\mathfrak{b} = 1}^{\mathfrak{a} - 1} k_\mathfrak{b}} C_{k_1} \mathfrak{f}(\mathfrak{a}) (-1)^{\sum\limits_{\mathfrak{m} = 1}^\mathfrak{a} k_\mathfrak{m}} \aInnerproduct{\dfrac{1}{2} (\varepsilon \mu)^{-\sum\limits_{\mathfrak{m} = 1}^\mathfrak{a} k_\mathfrak{m}} \nabla \times \left( e_E^{n + \frac{1}{2}} + e_E^{n - \frac{1}{2}} \right)}{\left( \nabla \times  \nabla \times \right)^{\sum\limits_{\mathfrak{m} = 1}^\mathfrak{a} k_\mathfrak{m}} \widetilde{H}} \\ = \aInnerproduct{\mu R_H^{n + \frac{1}{2}} + \nabla \times r_{R/2,E}^n \\ + \sum\limits_{\mathfrak{a} = 1}^{\frac{R}{2} - 1} \sum\limits_{k_\mathfrak{a} = 1}^{\frac{R}{2} - 1 - \sum\limits_{\mathfrak{b} = 1}^{\mathfrak{a} - 1} k_\mathfrak{b}} C_{k_1} \mathfrak{f}(\mathfrak{a}) (-1)^{\sum\limits_{\mathfrak{m} = 1}^\mathfrak{a} k_\mathfrak{m}} (\varepsilon \mu)^{-\sum\limits_{\mathfrak{m} = 1}^\mathfrak{a} k_\mathfrak{m}} (\nabla \times \nabla \times)^{\sum\limits_{\mathfrak{m} = 1}^\mathfrak{a} k_\mathfrak{m}} \nabla \times r_{\left( R/2 - \sum \limits_{m=1}^a k_m \right),E}^n}{\widetilde{H}}.
       \end{multline*}
Likewise, for the semidiscrete approximation of the initial system as in Equations~\labelcref{eqn:maxwell_p0_lfR,eqn:maxwell_E0_lfR,eqn:maxwell_H0_lfR}, we obtain for their errors the following system of equations:
  \begin{multline*}
    \aInnerproduct{\dfrac{e_p^{\frac{1}{2}} - e_{p_0}}{\Delta t/2}}{\widetilde{p}} - \dfrac{1}{2} \aInnerproduct{\dfrac{\varepsilon}{2} \left( e_E^{\frac{1}{2}} + e_{E_0} \right)}{\nabla \widetilde{p}} \\ + \sum\limits_{\mathfrak{a} = 1}^{\frac{R}{2} - 1} \sum\limits_{k_\mathfrak{a} = 1}^{\frac{R}{2} - 1 - \sum\limits_{\mathfrak{b} = 1}^{\mathfrak{a} - 1} k_\mathfrak{b}} \dfrac{C_{k_1}}{2^{2 \sum\limits_{\mathfrak{m} = 1}^\mathfrak{a} k_\mathfrak{m} + 1}} \mathfrak{f}( \mathfrak{a}) \aInnerproduct{\dfrac{\varepsilon}{2} (\nabla \nabla \cdot)^{\sum\limits_{\mathfrak{m} = 1}^\mathfrak{a} k_\mathfrak{m}} \left( e_E^{\frac{1}{2}} + e_{E_0} \right)}{\nabla \widetilde{p}} \\  = \aInnerproduct{\dfrac{R_p^{0}}{2} + \dfrac{\varepsilon}{2} \nabla \cdot r_{R/2,E}^0 - \sum\limits_{\mathfrak{a} = 1}^{\frac{R}{2} - 1} \sum\limits_{k_\mathfrak{a} = 1}^{\frac{R}{2} - 1 - \sum\limits_{\mathfrak{b} = 1}^{\mathfrak{a} - 1} k_\mathfrak{b}} \dfrac{C_{k_1}}{2^{2 \sum\limits_{\mathfrak{m} = 1}^\mathfrak{a} k_\mathfrak{m} + 1}} \mathfrak{f}(\mathfrak{a}) \varepsilon \nabla \cdot (\nabla \nabla \cdot)^{\sum\limits_{\mathfrak{m} = 1}^\mathfrak{a} k_\mathfrak{m}} r_{\left( R/2 - \sum \limits_{m=1}^a k_m \right),E}^0 }{\widetilde{p}},
    \end{multline*}
    \begin{multline*}
  \dfrac{1}{2}  \aInnerproduct{\dfrac{1}{2} \nabla \left(  e_p^{\frac{1}{2}} +  e_{p_0} \right)}{\widetilde{E}} - \sum\limits_{\mathfrak{a} = 1}^{\frac{R}{2} - 1} \sum\limits_{k_\mathfrak{a} = 1}^{\frac{R}{2} - 1 - \sum\limits_{\mathfrak{b} = 1}^{\mathfrak{a} - 1} k_\mathfrak{b}} \dfrac{C_{k_1}}{2^{2 \sum\limits_{\mathfrak{m} = 1}^\mathfrak{a} k_\mathfrak{m} + 1}} \mathfrak{f}(\mathfrak{a}) \aInnerproduct{\dfrac{1}{2} \nabla \left(e_p^{\frac{1}{2}} + e_{p_0} \right)}{\left( \nabla \nabla \cdot \right)^{\sum\limits_{\mathfrak{m} = 1}^\mathfrak{a} k_\mathfrak{m}} \widetilde{E}} \\ + \aInnerproduct{\varepsilon \dfrac{e_E^{\frac{1}{2}} - e_{E_0}}{\Delta t/2}}{\widetilde{E}} - \aInnerproduct{\dfrac{1}{2} \left( e_H^1 + e_{H_0} \right)}{\nabla \times \widetilde{E}} \\ + \sum\limits_{\mathfrak{a} = 1}^{\frac{R}{2} - 1}\sum\limits_{k_\mathfrak{a} = 1}^{\frac{R}{2} - 1 - \sum\limits_{\mathfrak{b} = 1}^{\mathfrak{a} - 1} k_\mathfrak{b}}  \dfrac{C_{k_1}}{2^{2 \sum\limits_{\mathfrak{m} = 1}^\mathfrak{a} k_\mathfrak{m}}} \mathfrak{f}( \mathfrak{a}) (-1)^{\sum\limits_{\mathfrak{m} = 1}^\mathfrak{a} k_\mathfrak{m}} \aInnerproduct{\dfrac{1}{2} ( \mu \varepsilon )^{-\sum\limits_{\mathfrak{m} = 1}^\mathfrak{a} k_\mathfrak{m}} \left( \nabla \times \nabla \times \right)^{\sum\limits_{\mathfrak{m} = 1}^\mathfrak{a} k_\mathfrak{m}} \left( e_H^{1} + e_{H_0} \right)}{\nabla \times \widetilde{E}} \\ = \aInnerproduct{\dfrac{\varepsilon}{2} R_E^0 + \dfrac{1}{2} \nabla r_{R/2,p}^0 - \nabla \times r_{R/2,H}^{\frac{1}{2}} - \sum\limits_{\mathfrak{a} = 1}^{\frac{R}{2} - 1} \sum\limits_{k_\mathfrak{a} = 1}^{\frac{R}{2} - 1 - \sum\limits_{\mathfrak{b} = 1}^{\mathfrak{a} - 1} k_\mathfrak{b}} \dfrac{C_{k_1}}{2^{2 \sum\limits_{\mathfrak{m} = 1}^\mathfrak{a} k_\mathfrak{m} + 1}} \mathfrak{f}(\mathfrak{a}) \left(\varepsilon \left( \nabla \nabla \cdot \right)^{\sum\limits_{\mathfrak{m} = 1}^\mathfrak{a} k_\mathfrak{m}} \nabla r_{\left( R/2 - \sum \limits_{m=1}^a k_m \right),p}^0 \right. \\ - \left. (-1)^{\sum\limits_{\mathfrak{m} = 1}^\mathfrak{a} k_\mathfrak{m}} (\mu \varepsilon)^{-\sum\limits_{\mathfrak{m} = 1}^\mathfrak{a} k_\mathfrak{m}} \nabla \times (\nabla \times \nabla \times)^{\sum\limits_{\mathfrak{m} = 1}^\mathfrak{a} k_\mathfrak{m}} r_{\left( R/2 - \sum \limits_{m=1}^a k_m \right),H}^{\frac{1}{2}}\right)}{\widetilde{E}},
  \end{multline*}
  \begin{multline*}
    \aInnerproduct{\mu \dfrac{e_H^1 - e_{H_0}}{\Delta t}}{\widetilde{H}} + \dfrac{1}{2} \aInnerproduct{\dfrac{1}{2} \nabla \times \left(e_E^{\frac{1}{2}} + e_{E_0} \right)}{\widetilde{H}} \\ - \sum\limits_{\mathfrak{a} = 1}^{\frac{R}{2} - 1} \sum\limits_{k_\mathfrak{a} = 1}^{\frac{R}{2} - 1 - \sum\limits_{\mathfrak{b} = 1}^{\mathfrak{a} - 1} k_\mathfrak{b}} \dfrac{C_{k_1}}{2^{2 \sum\limits_{\mathfrak{m} = 1}^\mathfrak{a} k_\mathfrak{m} + 1}} \mathfrak{f}(\mathfrak{a}) (-1)^{\sum\limits_{\mathfrak{m} = 1}^\mathfrak{a} k_\mathfrak{m}} \aInnerproduct{\dfrac{1}{2} (\varepsilon \mu)^{-\sum\limits_{\mathfrak{m} = 1}^\mathfrak{a} k_\mathfrak{m}} \nabla \times \left( e_E^{\frac{1}{2}} + e_{E_0} \right)}{\left( \nabla \times  \nabla \times \right)^{\sum\limits_{\mathfrak{m} = 1}^\mathfrak{a} k_\mathfrak{m}} \widetilde{H}} \\ = \aInnerproduct{\mu R_H^{\frac{1}{2}} + \dfrac{1}{2} \nabla \times r_{R/2,E}^0 \\ + \sum\limits_{\mathfrak{a} = 1}^{\frac{R}{2} - 1} \sum\limits_{k_\mathfrak{a} = 1}^{\frac{R}{2} - 1 - \sum\limits_{\mathfrak{b} = 1}^{\mathfrak{a} - 1} k_\mathfrak{b}} \dfrac{C_{k_1}}{2^{2 \sum\limits_{\mathfrak{m} = 1}^\mathfrak{a} k_\mathfrak{m} + 1}} \mathfrak{f}(\mathfrak{a}) (-1)^{\sum\limits_{\mathfrak{m} = 1}^\mathfrak{a} k_\mathfrak{m}} (\varepsilon \mu)^{-\sum\limits_{\mathfrak{m} = 1}^\mathfrak{a} k_\mathfrak{m}} (\nabla \times \nabla \times)^{\sum\limits_{\mathfrak{m} = 1}^\mathfrak{a} k_\mathfrak{m}} \nabla \times r_{\left( R/2 - \sum \limits_{m=1}^a k_m \right),E}^0}{\widetilde{H}},
    \end{multline*}
and for which we define the initial remainder terms as follows:
\begin{align*}
R^0_p & \coloneq \dfrac{1}{\Delta t} \left[ \int\limits_{t^0}^{t^\frac{1}{4}} \dfrac{(t^0 - s)^R}{R!} \dfrac{\partial^{R+1} p}{\partial t^{R+1}}(s) ds + \int\limits_{t^\frac{1}{4}}^{t^{\frac{1}{2}}} \dfrac{(t^{\frac{1}{2}} - s)^R}{R!} \dfrac{\partial^{R+1} p}{\partial t^{R+1}}(s) ds \right], \\
R^0_E & \coloneq \dfrac{1}{\Delta t} \left[ \int\limits_{t^0}^{t^\frac{1}{4}} \dfrac{(t^0 - s)^R}{R!} \dfrac{\partial^{R+1} E}{\partial t^{R+1}}(s) ds + \int\limits_{t^\frac{1}{4}}^{t^{\frac{1}{2}}} \dfrac{(t^{\frac{1}{2}} - s)^R}{R!} \dfrac{\partial^{R+1} E}{\partial t^{R+1}}(s) ds \right], \\
R^{ \frac{1}{2}}_H & \coloneq \dfrac{1}{\Delta t} \left[ \int\limits_{t^0}^{t^{ \frac{1}{2}}} \dfrac{(t^0 - s)^R}{R!} \dfrac{\partial^{R+1} H}{\partial t^{R+1}}(s) ds + \int\limits_{t^{\frac{1}{2}}}^{t^{1}} \dfrac{(t^{1} - s)^R}{R!} \dfrac{\partial^{R+1} H}{\partial t^{R+1}}(s) ds\right], \\
 r^0_{S/2,p} &\coloneq \dfrac{1}{2} \left[-\int\limits_{t^0}^{t^\frac{1}{4}} \dfrac{(t^0 - s)^{S-1}}{(S-1)!} \dfrac{\partial^{S} p}{\partial t^{S}}(s) ds + \int\limits_{t^\frac{1}{4}}^{t^{\frac{1}{2}}} \dfrac{(t^{\frac{1}{2}} - s)^{S-1}}{(S-1)!} \dfrac{\partial^{S} p}{\partial t^{S}}(s) ds \right], \\
  r^0_{S/2,E} &\coloneq \dfrac{1}{2} \left[-\int\limits_{t^0}^{t^\frac{1}{4}} \dfrac{(t^0 - s)^{S-1}}{(S-1)!} \dfrac{\partial^{S} E}{\partial t^{S}}(s) ds + \int\limits_{t^\frac{1}{4}}^{t^{\frac{1}{2}}} \dfrac{(t^{\frac{1}{2}} - s)^{S-1}}{(S-1)!} \dfrac{\partial^{S} E}{\partial t^{S}}(s) ds \right], \\
r^{\frac{1}{2}}_{S/2,H} &\coloneq \dfrac{1}{2} \left[-\int\limits_{t^0}^{t^{\frac{1}{2}}} \dfrac{(t^0 - s)^{S-1}}{(S-1)!} \dfrac{\partial^{S} H}{\partial t^{S}}(s) ds + \int\limits_{t^{\frac{1}{2}}}^{t^{1}} \dfrac{(t^{1} - s)^{S-1}}{(S-1)!} \dfrac{\partial^{S} H}{\partial t^{S}}(s) ds\right],
\end{align*}
for $2 \leqslant S \leqslant R$ with $S$ even. Now, in this set of weak formulation equations for the errors, we choose the test functions to be $\widetilde{p} = 2 \Delta t \varepsilon^{-1} \left( e_p^{n + \frac{1}{2}} + e_p^{n - \frac{1}{2}} \right)$, $\widetilde{E} = 2 \Delta t \left( e_E^{n + \frac{1}{2}} + e_E^{n - \frac{1}{2}} \right)$ and $\widetilde{H} = 2 \Delta t \left( e_H^n + e_H^{n - 1} \right)$. Next, by following essentially the same sequence of steps as in Theorem~\ref{thm:dscrt_enrgy_estmt_lfR}, we obtain the estimate for these error terms to be:
\begin{multline*}
  \norm{e_p^{n + \frac{1}{2}}}^2_{\varepsilon^{-1}} - \norm{e_p^{n - \frac{1}{2}}}^2_{\varepsilon^{-1}} + \norm{e_E^{n + \frac{1}{2}}}^2_{\varepsilon} - \norm{e_E^{n - \frac{1}{2}}}^2_{\varepsilon} + \norm{e_H^{n+1}}^2_{\mu} -  \norm{e_H^n}^2_{\mu} \le \Delta t \left[ \norm{e_p^{n + \frac{1}{2}}}^2_{\varepsilon^{-1}} + \norm{e_p^{n - \frac{1}{2}}}^2_{\varepsilon^{-1}} \right. \\ \left. + \norm{e_E^{n + \frac{1}{2}}}^2_{\varepsilon} + \norm{e_E^{n - \frac{1}{2}}}^2_{\varepsilon} + \norm{e_H^{n+1}}^2_{\mu} +  \norm{e_H^n}^2_{\mu}\right] +
 \Delta t \left[ \norm{R_p^n}^2_{\varepsilon^{-1}} + \norm{R_E^n}^2_{\varepsilon} + \norm{R_H^{n + \frac{1}{2}}}^2_{\mu} + \norm{\nabla r_{R/2,p}^n} ^2_{\varepsilon^{-1}} \right. \\ \left. + \norm{\nabla \cdot r_{R/2,E}^n}^2_{\varepsilon} + \varepsilon^{-1} \mu^{-1} \norm{\nabla \times r_{R/2,E}^n}^2_{\varepsilon} +  \varepsilon^{-1} \mu^{-1} \norm{\nabla \times r_{R/2,H}^{n + \frac{1}{2}}}^2_{\mu} \right. \\ 
\left. +\sum\limits_{\mathfrak{a} = 1}^{\frac{R}{2} - 1} \sum\limits_{k_\mathfrak{a} = 1}^{\frac{R}{2} - 1 - \sum\limits_{\mathfrak{b} = 1}^{\mathfrak{a} - 1} k_\mathfrak{b}} C_{k_1}^2 \mathfrak{f}(\mathfrak{a})^2 \left(\norm{\left( \nabla \nabla \cdot \right)^{\sum\limits_{\mathfrak{m} = 1}^\mathfrak{a} k_\mathfrak{m}} \nabla r_{\left( R/2 - \sum \limits_{m=1}^a k_m \right),p}^n}^2_{\varepsilon^{-1}} + \norm{\nabla \cdot (\nabla \nabla \cdot)^{\sum\limits_{\mathfrak{m} = 1}^\mathfrak{a} k_\mathfrak{m}} r_{\left( R/2 - \sum \limits_{m=1}^a k_m \right),E}^n }^2_\varepsilon  \right. \right. \\ \left.\left. + (\varepsilon \mu)^{-(2 \sum\limits_{\mathfrak{m} = 1}^\mathfrak{a} k_\mathfrak{m} + 1)} \norm{(\nabla \times \nabla \times)^{\sum\limits_{\mathfrak{m} = 1}^\mathfrak{a} k_\mathfrak{m}} \nabla \times r_{\left( R/2 - \sum \limits_{m=1}^a k_m \right),E}^n}^2_\varepsilon \right. \right. \\ \left.\left. + (\mu \varepsilon)^{-(2 \sum\limits_{\mathfrak{m} = 1}^\mathfrak{a} k_\mathfrak{m} + 1)} \norm{ \nabla \times (\nabla \times \nabla \times)^{\sum\limits_{\mathfrak{m} = 1}^\mathfrak{a} k_\mathfrak{m}} r_{\left( R/2 - \sum \limits_{m=1}^a k_m \right),H}^{n+\frac{1}{2}}}^2_\mu \right) \right].
\end{multline*}
Now, summing over $n = 1$ to $N-1$, we get that:
\begin{multline*}
  \norm{e_p^{N - \frac{1}{2}}}^2_{\varepsilon^{-1}} - \norm{e_p^{ \frac{1}{2}}}^2_{\varepsilon^{-1}} + \norm{e_E^{N - \frac{1}{2}}}^2_{\varepsilon} - \norm{e_E^{\frac{1}{2}}}^2_{\varepsilon} + \norm{e_H^N}^2_{\mu} -  \norm{e_H^1}^2_{\mu} \le \Delta t \left[ \norm{e_p^{N - \frac{1}{2}}}^2_{\varepsilon^{-1}} + \norm{e_p^{ \frac{1}{2}}}^2_{\varepsilon^{-1}} + \norm{e_E^{N - \frac{1}{2}}}^2_{\varepsilon} \right. \\ \left. + \norm{e_E^{\frac{1}{2}}}^2_{\varepsilon} + \norm{e_H^N}^2_{\mu} + \norm{e_H^1}^2_{\mu}\right] + 2 \Delta t \sum\limits_{n = 1}^{N - 2} \left[ \norm{e_p^{n + \frac{1}{2}}}^2_{\varepsilon^{-1}} + \norm{e_E^{n + \frac{1}{2}}}^2_{\varepsilon} + \norm{e_H^{n+1}}^2_{\mu}\right] +
 \Delta t \sum\limits_{n = 1}^{N - 1} \left[ \norm{R_p^n}^2_{\varepsilon^{-1}} + \norm{R_E^n}^2_{\varepsilon} \right. \\ \left. + \norm{R_H^{n + \frac{1}{2}}}^2_{\mu} + \norm{\nabla r_{R/2,p}^n} ^2_{\varepsilon^{-1}} + \norm{\nabla \cdot r_{R/2,E}^n}^2_{\varepsilon} + \varepsilon^{-1} \mu^{-1} \norm{\nabla \times r_{R/2,E}^n}^2_{\varepsilon} +  \varepsilon^{-1} \mu^{-1} \norm{\nabla \times r_{R/2,H}^{n + \frac{1}{2}}}^2_{\mu} \right. \\ \left. +\sum\limits_{\mathfrak{a} = 1}^{\frac{R}{2} - 1} \sum\limits_{k_\mathfrak{a} = 1}^{\frac{R}{2} - 1 - \sum\limits_{\mathfrak{b} = 1}^{\mathfrak{a} - 1} k_\mathfrak{b}} C_{k_1}^2 \mathfrak{f}(\mathfrak{a})^2 \left(\norm{\left( \nabla \nabla \cdot \right)^{\sum\limits_{\mathfrak{m} = 1}^\mathfrak{a} k_\mathfrak{m}} \nabla r_{\left( R/2 - \sum \limits_{m=1}^a k_m \right),p}^n}^2_{\varepsilon^{-1}} + \norm{\nabla \cdot (\nabla \nabla \cdot)^{\sum\limits_{\mathfrak{m} = 1}^\mathfrak{a} k_\mathfrak{m}} r_{\left( R/2 - \sum \limits_{m=1}^a k_m \right),E}^n }^2_\varepsilon  \right. \right. \\ \left.\left. + (\varepsilon \mu)^{-(2 \sum\limits_{\mathfrak{m} = 1}^\mathfrak{a} k_\mathfrak{m} + 1)} \norm{(\nabla \times \nabla \times)^{\sum\limits_{\mathfrak{m} = 1}^\mathfrak{a} k_\mathfrak{m}} \nabla \times r_{\left( R/2 - \sum \limits_{m=1}^a k_m \right),E}^n}^2_\varepsilon \right. \right. \\ \left.\left. + (\mu \varepsilon)^{-(2 \sum\limits_{\mathfrak{m} = 1}^\mathfrak{a} k_\mathfrak{m} + 1)} \norm{ \nabla \times (\nabla \times \nabla \times)^{\sum\limits_{\mathfrak{m} = 1}^\mathfrak{a} k_\mathfrak{m}} r_{\left( R/2 - \sum \limits_{m=1}^a k_m \right),H}^{n+\frac{1}{2}}}^2_\mu \right) \right].
\end{multline*}
Similarly, for the case of the initial equations, choosing test functions to be $\widetilde{p} = 2 \Delta t \varepsilon^{-1} \left( e_p^\frac{1}{2} + e_p^0 \right)$, $\widetilde{E} = 2 \Delta t \left( e_E^\frac{1}{2}  + e_E^0 \right)$ and $\widetilde{H} = 4 \Delta t \left( e_H^1 + e_H^0 \right)$, we get that:
\begin{multline*}
  \norm{e_p^{\frac{1}{2}}}^2_{\varepsilon^{-1}} - \norm{e_p^0}^2_{\varepsilon^{-1}} + \norm{e_E^{\frac{1}{2}}}^2_{\varepsilon} - \norm{e_E^0}^2_{\varepsilon} + \norm{e_H^{1}}^2_{\mu} -  \norm{e_H^0}^2_{\mu} \\ \le \Delta t \left[   \norm{e_p^{\frac{1}{2}}}^2_{\varepsilon^{-1}} + \norm{e_p^0}^2_{\varepsilon^{-1}} + \norm{e_E^{\frac{1}{2}}}^2_{\varepsilon} + \norm{e_E^0}^2_{\varepsilon} + \norm{e_H^{1}}^2_{\mu} +  \norm{e_H^0}^2_{\mu} \right] + \Delta t \left[ \norm{R_p^0}^2_{\varepsilon^{-1}} + \norm{R_E^0}^2_{\varepsilon} + \norm{R_H^{\frac{1}{2}}}^2_{\mu} \right. \\ \left. + \norm{\nabla r_{R/2,p}^0} ^2_{\varepsilon^{-1}} + \norm{\nabla \cdot r_{R/2,E}^0}^2_{\varepsilon} + \varepsilon^{-1} \mu^{-1} \norm{\nabla \times r_{R/2,E}^0}^2_{\varepsilon} +  \varepsilon^{-1} \mu^{-1} \norm{\nabla \times r_{R/2,H}^{\frac{1}{2}}}^2_{\mu} \right. \\ 
\left. +\sum\limits_{\mathfrak{a} = 1}^{\frac{R}{2} - 1} \sum\limits_{k_\mathfrak{a} = 1}^{\frac{R}{2} - 1 - \sum\limits_{\mathfrak{b} = 1}^{\mathfrak{a} - 1} k_\mathfrak{b}} \dfrac{C_{k_1}^2}{4^{2 \sum\limits_{\mathfrak{m} = 1}^\mathfrak{a} k_\mathfrak{m} + 1}} \mathfrak{f}(\mathfrak{a})^2 \left(\norm{\left( \nabla \nabla \cdot \right)^{\sum\limits_{\mathfrak{m} = 1}^\mathfrak{a} k_\mathfrak{m}} \nabla r_{\left( R/2 - \sum \limits_{m=1}^a k_m \right),p}^0}^2_{\varepsilon^{-1}}  \right. \right. \\ \left.\left. + \norm{\nabla \cdot (\nabla \nabla \cdot)^{\sum\limits_{\mathfrak{m} = 1}^\mathfrak{a} k_\mathfrak{m}} r_{\left( R/2 - \sum \limits_{m=1}^a k_m \right),E}^0}^2_\varepsilon + (\varepsilon \mu)^{-(2 \sum\limits_{\mathfrak{m} = 1}^\mathfrak{a} k_\mathfrak{m} + 1)} \norm{(\nabla \times \nabla \times)^{\sum\limits_{\mathfrak{m} = 1}^\mathfrak{a} k_\mathfrak{m}} \nabla \times r_{\left( R/2 - \sum \limits_{m=1}^a k_m \right),E}^0}^2_\varepsilon \right. \right. \\ \left.\left. + (\mu \varepsilon)^{-(2 \sum\limits_{\mathfrak{m} = 1}^\mathfrak{a} k_\mathfrak{m} + 1)} \norm{ \nabla \times (\nabla \times \nabla \times)^{\sum\limits_{\mathfrak{m} = 1}^\mathfrak{a} k_\mathfrak{m}} r_{\left( R/2 - \sum \limits_{m=1}^a k_m \right),H}^{\frac{1}{2}}}^2_\mu \right) \right].
\end{multline*}
Adding the previous two equations and using the initial conditions as in Equation~\eqref{eqn:ICs} and positivity of all the right-hand side terms, we have that:
\begin{multline*}
  \norm{e_p^{N - \frac{1}{2}}}^2_{\varepsilon^{-1}} + \norm{e_E^{N - \frac{1}{2}}}^2_{\varepsilon} + \norm{e_H^N}^2_{\mu} \le \dfrac{1 + \Delta t}{1 - \Delta t} \left[ \norm{e_p^0}^2_{\varepsilon^{-1}} + \norm{e_E^0}^2_{\varepsilon} + \norm{e_H^0}^2_{\mu} \right] + \\
  \dfrac{\Delta t}{1 - \Delta t} \sum\limits_{n = 0}^{N - 1} \left[ 2 \left( \norm{e_p^{n + \frac{1}{2}}}^2_{\varepsilon^{-1}} + \norm{e_E^{n + \frac{1}{2}}}^2_{\varepsilon} + \norm{e_H^{n + 1}}^2_{\mu} \right) + \left( \norm{R_p^n}^2_{\varepsilon^{-1}} + \norm{R_E^n}^2_{\varepsilon} + \norm{R_H^{n + \frac{1}{2}}}^2_{\mu} \right. \right. \\ + \left.\left. \norm{\nabla r_{R/2,p}^n} ^2_{\varepsilon^{-1}} + \norm{\nabla \cdot r_{R/2,E}^n}^2_{\varepsilon} + \varepsilon^{-1} \mu^{-1} \norm{\nabla \times r_{R/2,E}^n}^2_{\varepsilon} +  \varepsilon^{-1} \mu^{-1} \norm{\nabla \times r_{R/2,H}^{n + \frac{1}{2}}}^2_{\mu} \right. \right. \\ 
\left. \left. +\sum\limits_{\mathfrak{a} = 1}^{\frac{R}{2} - 1} \sum\limits_{k_\mathfrak{a} = 1}^{\frac{R}{2} - 1 - \sum\limits_{\mathfrak{b} = 1}^{\mathfrak{a} - 1} k_\mathfrak{b}} C_{k_1}^2 \mathfrak{f}(\mathfrak{a})^2 \left(\norm{\left( \nabla \nabla \cdot \right)^{\sum\limits_{\mathfrak{m} = 1}^\mathfrak{a} k_\mathfrak{m}} \nabla r_{\left( R/2 - \sum \limits_{m=1}^a k_m \right),p}^n}^2_{\varepsilon^{-1}} + \norm{\nabla \cdot (\nabla \nabla \cdot)^{\sum\limits_{\mathfrak{m} = 1}^\mathfrak{a} k_\mathfrak{m}} r_{\left( R/2 - \sum \limits_{m=1}^a k_m \right),E}^n }^2_\varepsilon  \right. \right. \right. \\ \left. \left. \left. + (\varepsilon \mu)^{-(2 \sum\limits_{\mathfrak{m} = 1}^\mathfrak{a} k_\mathfrak{m} + 1)} \norm{(\nabla \times \nabla \times)^{\sum\limits_{\mathfrak{m} = 1}^\mathfrak{a} k_\mathfrak{m}} \nabla \times r_{\left( R/2 - \sum \limits_{m=1}^a k_m \right),E}^n}^2_\varepsilon \right. \right. \right. \\ \left.\left.\left.+ (\mu \varepsilon)^{-(2 \sum\limits_{\mathfrak{m} = 1}^\mathfrak{a} k_\mathfrak{m} + 1)} \norm{ \nabla \times (\nabla \times \nabla \times)^{\sum\limits_{\mathfrak{m} = 1}^\mathfrak{a} k_\mathfrak{m}} r_{\left( R/2 - \sum \limits_{m=1}^a k_m \right),H}^{n+\frac{1}{2}}}^2_\mu \right) \right) \right].
\end{multline*}
Applying the discrete Gronwall inequality similar to Theorem~\ref{thm:dscrt_enrgy_estmt_lfR}, we obtain the estimate:
\begin{multline*}
  \norm{e_p^{N - \frac{1}{2}}}^2_{\varepsilon^{-1}} + \norm{e_E^{N - \frac{1}{2}}}^2_{\varepsilon} +\norm{e_H^N}^2_{\mu} \le \left[ \dfrac{6 \Delta t}{5} \sum\limits_{n = 0}^{N - 1} \left( \norm{R_p^n}^2_{\varepsilon^{-1}} + \norm{R_E^n}^2_{\varepsilon} + \norm{R_H^{n + \frac{1}{2}}}^2_{\mu} \right. \right. \\ + \left.\left. \norm{\nabla r_{R/2,p}^n} ^2_{\varepsilon^{-1}} + \norm{\nabla \cdot r_{R/2,E}^n}^2_{\varepsilon} + \varepsilon^{-1} \mu^{-1} \norm{\nabla \times r_{R/2,E}^n}^2_{\varepsilon} +  \varepsilon^{-1} \mu^{-1} \norm{\nabla \times r_{R/2,H}^{n + \frac{1}{2}}}^2_{\mu} \right. \right. \\ 
\left. \left. +\sum\limits_{\mathfrak{a} = 1}^{\frac{R}{2} - 1} \sum\limits_{k_\mathfrak{a} = 1}^{\frac{R}{2} - 1 - \sum\limits_{\mathfrak{b} = 1}^{\mathfrak{a} - 1} k_\mathfrak{b}} C_{k_1}^2 \mathfrak{f}(\mathfrak{a})^2 \left(\norm{\left( \nabla \nabla \cdot \right)^{\sum\limits_{\mathfrak{m} = 1}^\mathfrak{a} k_\mathfrak{m}} \nabla r_{\left( R/2 - \sum \limits_{m=1}^a k_m \right),p}^n}^2_{\varepsilon^{-1}} + \norm{\nabla \cdot (\nabla \nabla \cdot)^{\sum\limits_{\mathfrak{m} = 1}^\mathfrak{a} k_\mathfrak{m}} r_{\left( R/2 - \sum \limits_{m=1}^a k_m \right),E}^n }^2_\varepsilon  \right. \right. \right. \\ \left. \left. \left. + (\varepsilon \mu)^{-(2 \sum\limits_{\mathfrak{m} = 1}^\mathfrak{a} k_\mathfrak{m} + 1)} \norm{(\nabla \times \nabla \times)^{\sum\limits_{\mathfrak{m} = 1}^\mathfrak{a} k_\mathfrak{m}} \nabla \times r_{\left( R/2 - \sum \limits_{m=1}^a k_m \right),E}^n}^2_\varepsilon  \right. \right. \right. \\ \left. \left. \left. + (\mu \varepsilon)^{-(2 \sum\limits_{\mathfrak{m} = 1}^\mathfrak{a} k_\mathfrak{m} + 1)} \norm{ \nabla \times (\nabla \times \nabla \times)^{\sum\limits_{\mathfrak{m} = 1}^\mathfrak{a} k_\mathfrak{m}} r_{\left( R/2 - \sum \limits_{m=1}^a k_m \right),H}^{n+\frac{1}{2}}}^2_\mu \right) \right) + \dfrac{7}{5} \left(\norm{e_p^0}^2_{\varepsilon^{-1}} + \norm{e_E^0}^2_{\varepsilon}  + \norm{e_H^0}^2_\mu \right)\right] \exp\left( 4 T \right).
\end{multline*}
Now, we need to obtain bounding estimates for each of the Taylor remainder terms and to do so, we first consider the first remainder term corresponding to $p$ and argue as follows:
\begin{align*}
\norm{R^n_p}^2_{\varepsilon^{-1}} &= \dfrac{1}{\left(R! \Delta t \right)^2} \norm[\bigg]{\int\limits_{\mathclap{t^{n - 1}}}^{\mathclap{t^{n - \frac{1}{2}}}} (t^{n - 1} - s)^R \dfrac{\partial^{R+1} p}{\partial t^{R+1}}(s) ds + \int\limits_{\mathclap{t^{n - \frac{1}{2}}}}^{t^n} (t^n - s)^R \dfrac{\partial^{R+1} p}{\partial t^{R+1}}(s) ds}^2_{\varepsilon^{-1}}, \\
&\le \dfrac{1}{\left(R! \Delta t \right)^2} \norm[\bigg]{\int\limits_{t^{n - 1}}^{t^n} (t^n - s)^R \dfrac{\partial^{R+1} p}{\partial t^{R+1}}(s) ds}^2_{\varepsilon^{-1}}, \quad \text{(using $t^{n - 1} < t^n$)} \\
&\le \dfrac{1}{\left(R! \Delta t\right)^2} \int\limits_{t^{n - 1}}^{t^n} (s - t^n)^{2R} ds \int\limits_{\mathclap{t^{n - 1}}}^{t^n} \norm[\bigg]{\dfrac{\partial^{R+1} p}{\partial t^{R+1}}(s)}^2_{\varepsilon^{-1}} ds, \quad \text{(by Cauchy-Schwarz)} \\
&= \dfrac{\left(\Delta t \right)^{2R-1}}{(R!)^2 (2R+1)} \int\limits_{\mathclap{t^{n - 1}}}^{t^n} \norm[\bigg]{\dfrac{\partial^{R+1} p}{\partial t^{R+1}}(s)}^2_{\varepsilon^{-1}} ds,
\end{align*}
and now summing both sides over $n = 0$ to $N$, we have that:
\begin{equation*}
  \sum\limits_{n = 0}^N \norm{R^n_p}^2_{\varepsilon^{-1}} \le \dfrac{\left(\Delta t \right)^{2R-1}}{(R!)^2 (2R+1)} \int\limits_0^T \norm[\bigg]{\dfrac{\partial^{R+1} p}{\partial t^{R+1}}(s)}^2_{\varepsilon^{-1}} ds = \dfrac{\left(\Delta t \right)^{2R-1}}{(R!)^2 (2R+1)} \norm[\bigg]{\dfrac{\partial^{R+1} p}{\partial t^{R+1}}}^2_{L^2(0, T; L^2_{\varepsilon^{-1}}(\Omega))}.
\end{equation*}
Similarly, for the remaining Taylor remainder terms, we have that:
\begin{align*}
\norm{R^n_E}^2_{\varepsilon} & \le \dfrac{\left(\Delta t \right)^{2R-1}}{(R!)^2 (2R+1)} \norm[\bigg]{\dfrac{\partial^{R+1} E}{\partial t^{R+1}}}^2_{L^2(0, T; L^2_\varepsilon(\Omega))}, \\
  \sum\limits_{n = 0}^{N - 1} \norm{R^{n + \frac{1}{2}}_H}^2_\mu &\le \dfrac{\left(\Delta t \right)^{2R-1}}{(R!)^2 (2R+1)}\norm[\bigg]{\dfrac{\partial^{R+1} H}{\partial t^{R+1}}}^2_{L^2(0, T; L^2_\mu(\Omega))}, \\
  \sum\limits_{n = 0}^{N - 1} \norm{\Phi (r^n_{S/2,p})}^2_{\varepsilon^{-1}} &\le  \dfrac{\left( \Delta t \right)^{2S-1}}{2^{2S-1} (2S-1)!} \norm[\bigg]{\dfrac{\partial^S \left(\Phi (p) \right)}{\partial t^S}}^2_{L^2(0, T; L^2_{\varepsilon^{-1}}(\Omega))}, \\
  \sum\limits_{n = 0}^{N - 1} \norm{\Phi (r^n_{S/2,E})}^2_{\varepsilon} &\le \dfrac{\left( \Delta t \right)^{2S-1}}{2^{2S-1} (2S-1)!} \norm[\bigg]{\dfrac{\partial^S \left(\Phi (E) \right)}{\partial t^S}}^2_{L^2(0, T; L^2_\varepsilon(\Omega))},  \\
  \sum\limits_{n = 0}^{N - 1} \norm{\Phi (r^{n + \frac{1}{2}}_{S/2,H})}^2_\mu & \le \dfrac{\left( \Delta t \right)^{2S-1}}{2^{2S-1} (2S-1)!} \norm[\bigg]{\dfrac{\partial^S (\Phi (H))}{\partial t^S}}^2_{L^2(0, T; L^2_\mu(\Omega))},
\end{align*}
for $2 \leqslant S \leqslant R$, $S$ even, and in which $\Phi$ is taken to be an operator (for example, $\nabla \times \nabla \times$ or $\nabla \nabla \cdot$) acting on objects taken to be from a correspondingly appropriate function space on $\Omega$. Finally, using the regularity assumptions for $p$, $E$ and $H$, and $1$- and $2$-norm equivalence, we obtain the required result:
\[
  \norm{e_p^{N - \frac{1}{2}}}_{\varepsilon^{-1}} + \norm{e_E^{N - \frac{1}{2}}}_{\varepsilon} + \norm{e_H^N}_{\mu} \le C \left[ (\Delta t)^R + \norm{e_p^0}_{\varepsilon^{-1}} + \norm{e_E^0}_{\varepsilon} + \norm{e_H^0}_{\mu} \right]. \qedhere
\]
\end{proof}

\subsection{Error Estimate for Full Discretization}

We now present the error analysis for the full discretization of the three-field formulation of the Maxwell's equations using a compatible sequence of arbitrary order simplicial de Rham finite elements in conjunction with the LF$_R$ scheme. To do so, first we let $\Pi_h^0$, $\Pi_h^1$ and $\Pi_h^2$ denote the respective smoothed $L^2$ projection operators, that is, let $\Pi_h^0: \mathring{H}^1_{\varepsilon^{-1}}(\Omega) \longto U_h$, $\Pi_h^1: \mathring{H}_{\varepsilon}(\curl; \Omega) \longto V_h$ and $\Pi_h^2: \mathring{H}_{\mu}(\divgn; \Omega) \longto W_h$. The details about these operators has become somewhat standardized, and expositions be found in many articles such as \cite{Schoberl2008,Christiansen2007,ArFaWi2006,Arnold2018,Ern2021}. We also refer to the earlier works in~\cite{ArKa2025,ArKa2026} for more specific details as it pertains to the $(p, E, H$) Maxwell's system.

Given this brief background about these smoothed $L^2$ projection operators, we now define the errors for $p$, $E$ and $H$ at time $(n + 1/2) \Delta t$ or $n \Delta t$ under the full discretization to be the following and this is essentially a generalization of the analysis for the full error as in \cite[Section 5.2]{ArKa2025} for LF$_2$ and \cite[Section 2.2]{ArKa2026} for LF$_4$. So, we have that: \begin{alignat}{2}
  e_{p_h}^{n + \frac{1}{2}} &\coloneq p(t^{n + \frac{1}{2}}) - p_h^{n + \frac{1}{2}} &&= \eta^{n+\frac{1}{2}} - \eta_h^{n+\frac{1}{2}}, \label{eqn:p_fullerror_lfR} \\
  e_{E_h}^{n + \frac{1}{2}} &\coloneq E(t^{n + \frac{1}{2}}) - E_h^{n + \frac{1}{2}} &&= \zeta^{n + \frac{1}{2}} - \zeta_h^{n + \frac{1}{2}}, \label{eqn:E_fullerror_lfR} \\
  e_{H_h}^n &\coloneq H(t^n) - H_h^n &&= \xi^n - \xi_h^n, \label{eqn:H_fullerror_lfR}
\end{alignat}
and in which we now have the following definitions for the newly introduced terms:
\begin{alignat}{3}
  \eta^{n + \frac{1}{2}} &\coloneq p(t^{n + \frac{1}{2}}) - \Pi_h^0 p(t^{n + \frac{1}{2}}), &&\qquad \eta_h^{n + \frac{1}{2}} &&\coloneq p_h^{n + \frac{1}{2}}  - \Pi_h^0 p(t^{n + \frac{1}{2}}), \label{eqn:p_fullerror_sub_lfR} \\
  \zeta^{n + \frac{1}{2}} &\coloneq E(t^{n + \frac{1}{2}}) - \Pi_h^1 E(t^{n + \frac{1}{2}}), &&\qquad \zeta_h^{n + \frac{1}{2}} &&\coloneq E_h^{n + \frac{1}{2}} - \Pi_h^1 E(t^{n + \frac{1}{2}}), \label{eqn:E_fullerror_sub_lfR} \\
  \xi^n &\coloneq H(t^n) - \Pi_h^2 H(t^n), &&\qquad \xi_h^n &&\coloneq H_h^n - \Pi_h^2 H(t^n). \label{eqn:H_fullerror_sub_lfR}
\end{alignat}
For the LF$_R$ scheme as in Equations~\labelcref{eqn:maxwell_p_lfR,eqn:maxwell_E_lfR,eqn:maxwell_H_lfR}, using a de Rham sequence of finite dimensional subspaces of the corresponding function spaces for the spatial discretization of $(p^{n + \frac{1}{2}}, E^{n + \frac{1}{2}}, H^{n + 1})$, we obtain the following discrete problem: find $(p_h^{n + \frac{1}{2}}, E_h^{n + \frac{1}{2}}, H_h^{n + 1}) \in U_h \times V_h \times W_h \subseteq \mathring{H}_{\varepsilon^{-1}}^1 \times \mathring{H}_{\varepsilon}(\curl; \Omega) \times \mathring{H}_{\mu}(\divgn; \Omega)$ such that:
\begin{subequations}
\begin{multline}
  \aInnerproduct{\dfrac{p_h^{n + \frac{1}{2}} - p_h^{n - \frac{1}{2}}}{\Delta t}}{\widetilde{p}} - \aInnerproduct{\dfrac{\varepsilon}{2} \left( E_h^{n + \frac{1}{2}} + E_h^{n - \frac{1}{2}} \right)}{\nabla \widetilde{p}} \\ + \sum\limits_{\mathfrak{a} = 1}^{\frac{R}{2} - 1} \sum\limits_{k_\mathfrak{a} = 1}^{\frac{R}{2} - 1 - \sum\limits_{\mathfrak{b} = 1}^{\mathfrak{a} - 1} k_\mathfrak{b}} C_{k_1} \mathfrak{f}(\mathfrak{a}) \aInnerproduct{\dfrac{\varepsilon}{2} (\nabla \nabla \cdot)^{\sum\limits_{\mathfrak{m} = 1}^\mathfrak{a} k_\mathfrak{m}} \left( E_h^{n + \frac{1}{2}} + E_h^{n - \frac{1}{2}} \right)}{\nabla \widetilde{p}} =0, \label{eqn:maxwell_p_lfR_full}
  \end{multline} \\
  \begin{multline}
   \aInnerproduct{\dfrac{1}{2} \nabla \left(p_h^{n + \frac{1}{2}} + p_h^{n - \frac{1}{2}} \right)}{\widetilde{E}} - \sum\limits_{\mathfrak{a} = 1}^{\frac{R}{2} - 1} \sum\limits_{k_\mathfrak{a} = 1}^{\frac{R}{2} - 1 - \sum\limits_{\mathfrak{b} = 1}^{\mathfrak{a} - 1} k_\mathfrak{b}} C_{k_1} \mathfrak{f}(\mathfrak{a}) \aInnerproduct{\dfrac{1}{2} \nabla \left(p_h^{n + \frac{1}{2}} + p_h^{n - \frac{1}{2}} \right)}{\left( \nabla \nabla \cdot \right)^{\sum\limits_{\mathfrak{m} = 1}^\mathfrak{a} k_\mathfrak{m}} \widetilde{E}} \\ + \aInnerproduct{\varepsilon \dfrac{E_h^{n + \frac{1}{2}} - E_h^{n - \frac{1}{2}}}{\Delta t}}{\widetilde{E}} -  \aInnerproduct{\dfrac{1}{2} \left( H_h^{n + 1} + H_h^n \right)}{\nabla \times \widetilde{E}} \\  + \sum\limits_{\mathfrak{a} = 1}^{\frac{R}{2} - 1} \sum\limits_{k_\mathfrak{a} = 1}^{\frac{R}{2} - 1 - \sum\limits_{\mathfrak{b} = 1}^{\mathfrak{a} - 1} k_\mathfrak{b}} C_{k_1} \mathfrak{f}( \mathfrak{a}) (-1)^{\sum\limits_{\mathfrak{m} = 1}^\mathfrak{a} k_\mathfrak{m}} \aInnerproduct{\dfrac{1}{2} ( \mu \varepsilon )^{-\sum\limits_{\mathfrak{m} = 1}^\mathfrak{a} k_\mathfrak{m}} \left( \nabla \times \nabla \times \right)^{\sum\limits_{\mathfrak{m} = 1}^\mathfrak{a} k_\mathfrak{m}} \left( H_h^{n + 1} + H_h^n \right)}{\nabla \times \widetilde{E}} = 0, \label{eqn:maxwell_E_lfR_full} 
     \end{multline} \\
      \begin{multline}
  \aInnerproduct{\mu \dfrac{H_h^{n + 1} - H_h^n}{\Delta t}}{\widetilde{H}} +  \aInnerproduct{\dfrac{1}{2} \nabla \times \left( E_h^{n + \frac{1}{2}} + E_h^{n - \frac{1}{2}} \right)}{\widetilde{H}} \\ - \sum\limits_{\mathfrak{a} = 1}^{\frac{R}{2} - 1} \sum\limits_{k_\mathfrak{a} = 1}^{\frac{R}{2} - 1 - \sum\limits_{\mathfrak{b} = 1}^{\mathfrak{a} - 1} k_\mathfrak{b}} C_{k_1} \mathfrak{f}(\mathfrak{a}) (-1)^{\sum\limits_{\mathfrak{m} = 1}^\mathfrak{a} k_\mathfrak{m}} \aInnerproduct{\dfrac{1}{2} (\varepsilon \mu)^{-\sum\limits_{\mathfrak{m} = 1}^\mathfrak{a} k_\mathfrak{m}} \nabla \times \left( E_h^{n + \frac{1}{2}} + E_h^{n - \frac{1}{2}} \right)}{\left( \nabla \times  \nabla \times \right)^{\sum\limits_{\mathfrak{m} = 1}^\mathfrak{a} k_\mathfrak{m}} \widetilde{H}} = 0, \label{eqn:maxwell_H_lfR_full}
       \end{multline}
\end{subequations}
for all $(\widetilde{p}, \widetilde{E}, \widetilde{H}) \in U_h \times V_h \times W_h$, and for $n \in \{1, \dotso, N - 1\}$. Using $n = 0$ for bootstrapping as in Equations~\labelcref{eqn:maxwell_p0_lfR,eqn:maxwell_E0_lfR,eqn:maxwell_H0_lfR} leads to the discrete problem: find $(p_h^{\frac{1}{2}}, E_h^{\frac{1}{2}}, H_h^1) \in U_h \times V_h \times W_h \subseteq \mathring{H}_{\varepsilon^{-1}}^1 \times \mathring{H}_{\varepsilon}(\curl; \Omega) \times \mathring{H}_{\mu}(\divgn; \Omega)$ such that:
\begin{subequations}
  \begin{multline}
    \aInnerproduct{\dfrac{p_h^{\frac{1}{2}} - p_h^0}{\Delta t/2}}{\widetilde{p}} - \dfrac{1}{2} \aInnerproduct{\dfrac{\varepsilon}{2} \left( E_h^{\frac{1}{2}} + E_h^0 \right)}{\nabla \widetilde{p}} \\ + \sum\limits_{\mathfrak{a} = 1}^{\frac{R}{2} - 1} \sum\limits_{k_\mathfrak{a} = 1}^{\frac{R}{2} - 1 - \sum\limits_{\mathfrak{b} = 1}^{\mathfrak{a} - 1} k_\mathfrak{b}} \dfrac{C_{k_1}}{2^{2 \sum\limits_{\mathfrak{m} = 1}^\mathfrak{a} k_\mathfrak{m} + 1}} \mathfrak{f}( \mathfrak{a}) \aInnerproduct{\dfrac{\varepsilon}{2} (\nabla \nabla \cdot)^{\sum\limits_{\mathfrak{m} = 1}^\mathfrak{a} k_\mathfrak{m}} \left( E_h^{\frac{1}{2}} + E_h^0 \right)}{\nabla \widetilde{p}}  = 0, \label{eqn:maxwell_p0_lfR_full}
    \end{multline} \\
    \begin{multline}
  \dfrac{1}{2}  \aInnerproduct{\dfrac{1}{2} \nabla \left(  p_h^{\frac{1}{2}} +  p_h^0 \right)}{\widetilde{E}} - \sum\limits_{\mathfrak{a} = 1}^{\frac{R}{2} - 1} \sum\limits_{k_\mathfrak{a} = 1}^{\frac{R}{2} - 1 - \sum\limits_{\mathfrak{b} = 1}^{\mathfrak{a} - 1} k_\mathfrak{b}} \dfrac{C_{k_1}}{2^{2 \sum\limits_{\mathfrak{m} = 1}^\mathfrak{a} k_\mathfrak{m} + 1}} \mathfrak{f}(\mathfrak{a}) \aInnerproduct{\dfrac{1}{2} \nabla \left(p_h^{\frac{1}{2}} + p_h^0 \right)}{\left( \nabla \nabla \cdot \right)^{\sum\limits_{\mathfrak{m} = 1}^\mathfrak{a} k_\mathfrak{m}} \widetilde{E}} \\ + \aInnerproduct{\varepsilon \dfrac{E_h^{\frac{1}{2}} - E_h^0}{\Delta t/2}}{\widetilde{E}}  - \aInnerproduct{\dfrac{1}{2} \left( H_h^1 + H_h^0 \right)}{\nabla \times \widetilde{E}} \\ + \sum\limits_{\mathfrak{a} = 1}^{\frac{R}{2} - 1}\sum\limits_{k_\mathfrak{a} = 1}^{\frac{R}{2} - 1 - \sum\limits_{\mathfrak{b} = 1}^{\mathfrak{a} - 1} k_\mathfrak{b}}  \dfrac{C_{k_1}}{2^{2 \sum\limits_{\mathfrak{m} = 1}^\mathfrak{a} k_\mathfrak{m}}} \mathfrak{f}( \mathfrak{a}) (-1)^{\sum\limits_{\mathfrak{m} = 1}^\mathfrak{a} k_\mathfrak{m}} \aInnerproduct{\dfrac{1}{2} ( \mu \varepsilon )^{-\sum\limits_{\mathfrak{m} = 1}^\mathfrak{a} k_\mathfrak{m}} \left( \nabla \times \nabla \times \right)^{\sum\limits_{\mathfrak{m} = 1}^\mathfrak{a} k_\mathfrak{m}} \left( H_h^{1} + H_h^0 \right)}{\nabla \times \widetilde{E}} = 0, \label{eqn:maxwell_E0_lfR_full} 
  \end{multline} \\
  \begin{multline}
    \aInnerproduct{\mu \dfrac{H_h^1 - H_h^0}{\Delta t}}{\widetilde{H}} + \dfrac{1}{2} \aInnerproduct{\dfrac{1}{2} \nabla \times \left(E_h^{\frac{1}{2}} + E_h^0 \right)}{\widetilde{H}} \\ - \sum\limits_{\mathfrak{a} = 1}^{\frac{R}{2} - 1} \sum\limits_{k_\mathfrak{a} = 1}^{\frac{R}{2} - 1 - \sum\limits_{\mathfrak{b} = 1}^{\mathfrak{a} - 1} k_\mathfrak{b}} \dfrac{C_{k_1}}{2^{2 \sum\limits_{\mathfrak{m} = 1}^\mathfrak{a} k_\mathfrak{m} + 1}} \mathfrak{f}(\mathfrak{a}) (-1)^{\sum\limits_{\mathfrak{m} = 1}^\mathfrak{a} k_\mathfrak{m}} \aInnerproduct{\dfrac{1}{2} (\varepsilon \mu)^{-\sum\limits_{\mathfrak{m} = 1}^\mathfrak{a} k_\mathfrak{m}} \nabla \times \left( E_h^{\frac{1}{2}} + E_h^0 \right)}{\left( \nabla \times  \nabla \times \right)^{\sum\limits_{\mathfrak{m} = 1}^\mathfrak{a} k_\mathfrak{m}} \widetilde{H}} = 0, \label{eqn:maxwell_H0_lfR_full}
    \end{multline}
for all $(\widetilde{p}, \widetilde{E}, \widetilde{H}) \in U_h \times V_h \times W_h$ given $(p_h^0, E_h^0, H_h^0) \in U_h \times V_h \times W_h$. 
Let $\Pi_h^0: \mathring{H}^1_{\varepsilon^{-1}}(\Omega) \longto U_h$, $\Pi_h^1: \mathring{H}_{\varepsilon}(\curl; \Omega) \longto V_h$ and $\Pi_h^2: \mathring{H}_{\mu}(\divgn; \Omega) \longto W_h$ denote the respective smoothed $L^2$ projection operators as in the sense of \cite{Schoberl2008,Christiansen2007} and a detailed discussion is available in ~\cite[Lemma 4.3.8]{Brenner2008} ~\cite[Theorem 5.3]{ArFaWi2006}~\cite[Lemma 11.9, Corollary 11.11, Theorem 16.10, Theorem 17.5]{Ern2021}. For the initial conditions, we set:

  \begin{equation}
p_h^0 \coloneq \Pi_h^0 p_0, \quad E_h^0 \coloneq \Pi_h^1 E_0, \quad \text{~and~} \quad H_h^0 \coloneq \Pi_h^2 H_0. \label{eqn:initial_lfR}
  \end{equation}
\end{subequations}
With this, we can now state our theorem for convergence of error in the full discretization of the system of Maxwell's equations using LF$_R$ and $r^{\text{th}}$ order polynomial Whitney basis finite elements for the various function spaces.

\begin{theorem}[Full Error Estimate]\label{thm:full_error_estmt_lfR}
Let $p \in C^{R+1}(0, T; \mathring{H}^1_{\varepsilon^{-1}}(\Omega))$, $E \in C^{R+1}(0, T; \mathring{H}_{\varepsilon}(\curl; \Omega))$, and $H \in C^{R+1}(0, T; \mathring{H}_{\mu}(\divgn; \Omega))$ be the solution to the variational formulation of the Maxwell's equations as in Equations~\labelcref{eqn:maxwell_p_wf,eqn:maxwell_E_wf,eqn:maxwell_H_wf}, and let $(p_h^{n + \frac{1}{2}}, E_h^{n + \frac{1}{2}}, H_h^{n + 1})$ be the solution of the fully discretized Maxwell's equations using the LF$_R$ scheme as in Equations~\labelcref{eqn:maxwell_p_lfR_full,eqn:maxwell_E_lfR_full,eqn:maxwell_H_lfR_full,eqn:maxwell_p0_lfR_full,eqn:maxwell_E0_lfR_full,eqn:maxwell_H0_lfR_full,eqn:initial_lfR}. If the time step $\Delta t > 0$ and the mesh parameter $h > 0$ are both sufficiently small, then there exists a positive bounded constant $C$ independent of both $\Delta t$ and $h$ such that the following error estimate holds:
\[
  \norm{e_{p_h}^{N - \frac{1}{2}}}_{\varepsilon^{-1}} + \norm{e_{E_h}^{N - \frac{1}{2}}}_{\varepsilon} + \norm{e_{H_h}^N}_{\mu} \le C \left[ (\Delta t)^R + h^r + h^r (\Delta t)^R \right],
\]
where the finite element subspaces $U_h$, $V_h$ and $W_h$ are each spanned by their respective Whitney form basis of polynomial order $r \ge 1$.
\end{theorem}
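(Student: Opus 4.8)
The plan is to follow the template of the semidiscrete error analysis in Theorem~\ref{thm:dscrt_error_estmt_lfR}, now additionally accounting for the spatial discretization through the error splitting already recorded in Equations~\labelcref{eqn:p_fullerror_lfR,eqn:E_fullerror_lfR,eqn:H_fullerror_lfR}, namely $e_{p_h}^{n+\frac12} = \eta^{n+\frac12} - \eta_h^{n+\frac12}$, $e_{E_h}^{n+\frac12} = \zeta^{n+\frac12} - \zeta_h^{n+\frac12}$, $e_{H_h}^{n} = \xi^{n} - \xi_h^{n}$, in which the projection parts $\eta,\zeta,\xi$ are controlled purely by approximation theory while the discrete parts $\eta_h,\zeta_h,\xi_h \in U_h\times V_h\times W_h$ are to be estimated by an energy argument. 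First I would record the approximation estimates for the smoothed $L^2$ projections: under the stated regularity and with the $r^{\text{th}}$-order Whitney spaces, $\norm{\eta^{n+\frac12}}_{\varepsilon^{-1}}$, $\norm{\zeta^{n+\frac12}}_{\varepsilon}$, $\norm{\xi^{n}}_{\mu}$, together with the corresponding norms of their images under $\nabla$, $\nabla\times$, $\divgn$ and the iterated operators appearing in LF$_R$, are $O(h^{r})$; here the commuting diagram property $\nabla\Pi_h^0 = \Pi_h^1\nabla$, $\nabla\times\Pi_h^1=\Pi_h^2\nabla\times$, $\divgn\,\Pi_h^2 = \Pi_h^3\divgn$ is essential, as it rewrites $\nabla\zeta$, $\nabla\times\zeta$, $\divgn\,\zeta$ as interpolation errors of $\nabla E$, $\nabla\times E$, $\divgn\,E$, and similarly for the higher iterates.

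Next I would derive the equation satisfied by $(\eta_h,\zeta_h,\xi_h)$. Starting from the Taylor-expanded weak formulation used in the proof of Theorem~\ref{thm:dscrt_error_estmt_lfR} — which in particular holds for all test functions in $U_h\times V_h\times W_h$, with the remainder terms $R^n_\bullet$ and $r^n_{S/2,\bullet}$ on the right — I would subtract the fully discrete scheme in Equations~\labelcref{eqn:maxwell_p_lfR_full,eqn:maxwell_E_lfR_full,eqn:maxwell_H_lfR_full}. This produces an identity for $e_{p_h},e_{E_h},e_{H_h}$; moving the $\eta,\zeta,\xi$ contributions to the right-hand side — using the commuting diagram property and the $L^2$-orthogonality of the projections against the discrete test spaces to eliminate the terms in which the test function itself is differentiated — yields a system for $(\eta_h,\zeta_h,\xi_h)$ with exactly the algebraic structure of LF$_R$ but with a right-hand side consisting of (i) the time Taylor remainders already present in Theorem~\ref{thm:dscrt_error_estmt_lfR} and (ii) new projection-consistency terms built from $\eta,\zeta,\xi$ and their images under the spatial operators. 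The bootstrapping equations are treated in the same way, and the initial discrete errors vanish since $p_h^0 = \Pi_h^0 p_0$, $E_h^0 = \Pi_h^1 E_0$, $H_h^0 = \Pi_h^2 H_0$ by Equation~\eqref{eqn:initial_lfR}.

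Then I would run the energy argument verbatim from Theorems~\ref{thm:dscrt_enrgy_estmt_lfR} and~\ref{thm:dscrt_error_estmt_lfR}, testing with $\widetilde p = 2\Delta t\,\varepsilon^{-1}(\eta_h^{n+\frac12}+\eta_h^{n-\frac12})$, $\widetilde E = 2\Delta t(\zeta_h^{n+\frac12}+\zeta_h^{n-\frac12})$, $\widetilde H = 2\Delta t(\xi_h^{n}+\xi_h^{n-1})$ and the analogous bootstrap choices. All the structural terms, including the iterated-operator sums, cancel exactly as in the semidiscrete case because the discrete scheme preserves that structure within a de Rham complex; what remains is the telescoping energy difference on the left and, on the right, $\Delta t$ times the $L^2$ norms of $\eta_h,\zeta_h,\xi_h$ (absorbed on the left after summation for $\Delta t$ small) plus $\Delta t$ times squared norms of the two families of consistency terms. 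Summing over $n$, the time-remainder family contributes $O((\Delta t)^{2R})$ exactly as in Theorem~\ref{thm:dscrt_error_estmt_lfR} using the $C^{R+1}$-in-time regularity, the projection family contributes $O(h^{2r})$, and the mixed terms $O(h^{2r}(\Delta t)^{2R})$; a discrete Gronwall inequality then gives $\norm{\eta_h^{N-\frac12}}_{\varepsilon^{-1}}^2+\norm{\zeta_h^{N-\frac12}}_\varepsilon^2+\norm{\xi_h^{N}}_\mu^2 \lesssim (\Delta t)^{2R}+h^{2r}+h^{2r}(\Delta t)^{2R}$. Taking square roots, adding the approximation bounds for $\eta^{N-\frac12},\zeta^{N-\frac12},\xi^{N}$ by the triangle inequality, and using $1$- and $2$-norm equivalence yields the claimed estimate.

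The main obstacle is the second stage: correctly identifying and bounding the projection-consistency terms coming from the iterated operators $(\nabla\nabla\cdot)^{k}$ and $(\nabla\times\nabla\times)^{k}$ acting on $E$, $H$ and $p$. One must be careful that every term in which the test function is acted on by such an operator is removed by orthogonality, so that no inverse inequality in $h$ is needed against the uncontrolled $L^2$-norms of the discrete errors, that the commuting property is applied enough times to reduce the surviving projection errors to interpolation errors of suitably differentiated — and hence, by the regularity hypotheses, $L^2$-bounded — quantities, and that the weights $\mathfrak f(\mathfrak a)$ with their $(\Delta t/2)^{2\sum k_\mathfrak m}$ factors are tracked so that the mixed $h^{2r}(\Delta t)^{2R}$ contributions genuinely carry the stated power. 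Checking that none of this disturbs the exact cancellation of the structural terms in the energy identity is the delicate point; once it is in place, the rest is a routine adaptation of Theorems~\ref{thm:dscrt_enrgy_estmt_lfR} and~\ref{thm:dscrt_error_estmt_lfR} and of the LF$_2$ and LF$_4$ analyses of~\cite{ArKa2025,ArKa2026}.
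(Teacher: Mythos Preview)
Your outline is essentially the paper's own argument: the same splitting $e=\eta-\eta_h$, the same test functions $-2\Delta t\,\varepsilon^{-1}(\eta_h^{n+\frac12}+\eta_h^{n-\frac12})$ etc., the same reliance on the de Rham inclusions $\nabla U_h\subseteq V_h$, $\nabla\times V_h\subseteq W_h$ to kill the structural cross terms, the same Gronwall step, and the same triangle-inequality finish. One small correction to your bookkeeping: in the paper the mixed $h^r(\Delta t)^R$ contribution does not come from the $\mathfrak f(\mathfrak a)$-weighted spatial operator terms but from Taylor-expanding the time differences $\eta^{n+\frac12}-\eta^{n-\frac12}$ (and analogously for $\zeta,\xi$) about the midpoint, which yields both the $(I-\Pi_h^k)\,\partial_t^{2j+1}(\cdot)$ terms (giving $h^r$) and the $(I-\Pi_h^k)R_\bullet^n$ terms (giving $h^r(\Delta t)^R$).
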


 \begin{proof}
 First, we shall subtract the set of equations for the full discretization as in Equations~\labelcref{eqn:maxwell_p_lfR_full,eqn:maxwell_E_lfR_full,eqn:maxwell_H_lfR_full} from Equations~\labelcref{eqn:remainder_p_lfR,eqn:remainder_E_lfR,eqn:remainder_H_lfR},  and then use the error terms in Equations~\labelcref{eqn:p_fullerror_lfR,eqn:E_fullerror_lfR,eqn:H_fullerror_lfR} and thereby obtain:
\begin{multline*}
  \aInnerproduct{\dfrac{e_{p_h}^{n + \frac{1}{2}} - e_{p_h}^{n - \frac{1}{2}}}{\Delta t}}{\widetilde{p}} - \aInnerproduct{\dfrac{\varepsilon}{2} \left( e_{E_h}^{n + \frac{1}{2}} + e_{E_h}^{n - \frac{1}{2}} \right)}{\nabla \widetilde{p}} \\ + \sum\limits_{\mathfrak{a} = 1}^{\frac{R}{2} - 1} \sum\limits_{k_\mathfrak{a} = 1}^{\frac{R}{2} - 1 - \sum\limits_{\mathfrak{b} = 1}^{\mathfrak{a} - 1} k_\mathfrak{b}} C_{k_1} \mathfrak{f}(\mathfrak{a}) \aInnerproduct{\dfrac{\varepsilon}{2} (\nabla \nabla \cdot)^{\sum\limits_{\mathfrak{m} = 1}^\mathfrak{a} k_\mathfrak{m}} \left( e_{E_h}^{n + \frac{1}{2}} + e_{E_h}^{n - \frac{1}{2}} \right)}{\nabla \widetilde{p}} \\ = \aInnerproduct{R_p^{n} + \varepsilon \nabla \cdot r_{R/2,E}^n - \sum\limits_{\mathfrak{a} = 1}^{\frac{R}{2} - 1} \sum\limits_{k_\mathfrak{a} = 1}^{\frac{R}{2} - 1 - \sum\limits_{\mathfrak{b} = 1}^{\mathfrak{a} - 1} k_\mathfrak{b}} C_{k_1} \mathfrak{f}(\mathfrak{a}) \varepsilon \nabla \cdot (\nabla \nabla \cdot)^{\sum\limits_{\mathfrak{m} = 1}^\mathfrak{a} k_\mathfrak{m}} r_{\left( R/2 - \sum \limits_{m=1}^a k_m \right),E}^n }{\widetilde{p}},
  \end{multline*}
  \begin{multline*}
   \aInnerproduct{\dfrac{1}{2} \nabla \left(e_{p_h}^{n + \frac{1}{2}} + e_{p_h}^{n - \frac{1}{2}} \right)}{\widetilde{E}} - \sum\limits_{\mathfrak{a} = 1}^{\frac{R}{2} - 1} \sum\limits_{k_\mathfrak{a} = 1}^{\frac{R}{2} - 1 - \sum\limits_{\mathfrak{b} = 1}^{\mathfrak{a} - 1} k_\mathfrak{b}} C_{k_1} \mathfrak{f}(\mathfrak{a}) \aInnerproduct{\dfrac{1}{2} \nabla \left(e_{p_h}^{n + \frac{1}{2}} + e_{p_h}^{n - \frac{1}{2}} \right)}{\left( \nabla \nabla \cdot \right)^{\sum\limits_{\mathfrak{m} = 1}^\mathfrak{a} k_\mathfrak{m}} \widetilde{E}} \\ + \aInnerproduct{\varepsilon \dfrac{e_{E_h}^{n + \frac{1}{2}} - e_{E_h}^{n - \frac{1}{2}}}{\Delta t}}{\widetilde{E}} -  \aInnerproduct{\dfrac{1}{2} \left( e_{H_h}^{n + 1} + e_{H_h}^n \right)}{\nabla \times \widetilde{E}} \\ + \sum\limits_{\mathfrak{a} = 1}^{\frac{R}{2} - 1} \sum\limits_{k_\mathfrak{a} = 1}^{\frac{R}{2} - 1 - \sum\limits_{\mathfrak{b} = 1}^{\mathfrak{a} - 1} k_\mathfrak{b}} C_{k_1} \mathfrak{f}( \mathfrak{a}) (-1)^{\sum\limits_{\mathfrak{m} = 1}^\mathfrak{a} k_\mathfrak{m}} \aInnerproduct{\dfrac{1}{2} ( \mu \varepsilon )^{-\sum\limits_{\mathfrak{m} = 1}^\mathfrak{a} k_\mathfrak{m}} \left( \nabla \times \nabla \times \right)^{\sum\limits_{\mathfrak{m} = 1}^\mathfrak{a} k_\mathfrak{m}} \left( e_{H_h}^{n + 1} + e_{H_h}^n \right)}{\nabla \times \widetilde{E}} \\ =  \aInnerproduct{\varepsilon R_E^n + \nabla r_{R/2,p}^n - \nabla \times r_{R/2,H}^{n+\frac{1}{2}} - \sum\limits_{\mathfrak{a} = 1}^{\frac{R}{2} - 1} \sum\limits_{k_\mathfrak{a} = 1}^{\frac{R}{2} - 1 - \sum\limits_{\mathfrak{b} = 1}^{\mathfrak{a} - 1} k_\mathfrak{b}} C_{k_1} \mathfrak{f}(\mathfrak{a}) \left(\varepsilon \left( \nabla \nabla \cdot \right)^{\sum\limits_{\mathfrak{m} = 1}^\mathfrak{a} k_\mathfrak{m}} \nabla r_{\left( R/2 - \sum \limits_{m=1}^a k_m \right),p}^n \right. \\ - \left. (-1)^{\sum\limits_{\mathfrak{m} = 1}^\mathfrak{a} k_\mathfrak{m}} (\mu \varepsilon)^{-\sum\limits_{\mathfrak{m} = 1}^\mathfrak{a} k_\mathfrak{m}} \nabla \times (\nabla \times \nabla \times)^{\sum\limits_{\mathfrak{m} = 1}^\mathfrak{a} k_\mathfrak{m}} r_{\left( R/2 - \sum \limits_{m=1}^a k_m \right),H}^{n+\frac{1}{2}}\right)}{\widetilde{E}},
     \end{multline*}
      \begin{multline*}
  \aInnerproduct{\mu \dfrac{e_{H_h}^{n + 1} - e_{H_h}^n}{\Delta t}}{\widetilde{H}} +  \aInnerproduct{\dfrac{1}{2} \nabla \times \left( e_{E_h}^{n + \frac{1}{2}} + e_{E_h}^{n - \frac{1}{2}} \right)}{\widetilde{H}} \\ - \sum\limits_{\mathfrak{a} = 1}^{\frac{R}{2} - 1} \sum\limits_{k_\mathfrak{a} = 1}^{\frac{R}{2} - 1 - \sum\limits_{\mathfrak{b} = 1}^{\mathfrak{a} - 1} k_\mathfrak{b}} C_{k_1} \mathfrak{f}(\mathfrak{a}) (-1)^{\sum\limits_{\mathfrak{m} = 1}^\mathfrak{a} k_\mathfrak{m}} \aInnerproduct{\dfrac{1}{2} (\varepsilon \mu)^{-\sum\limits_{\mathfrak{m} = 1}^\mathfrak{a} k_\mathfrak{m}} \nabla \times \left( e_{E_h}^{n + \frac{1}{2}} + e_{E_h}^{n - \frac{1}{2}} \right)}{\left( \nabla \times  \nabla \times \right)^{\sum\limits_{\mathfrak{m} = 1}^\mathfrak{a} k_\mathfrak{m}} \widetilde{H}} \\ = \aInnerproduct{\mu R_H^{n + \frac{1}{2}} + \nabla \times r_{R/2,E}^n \\ + \sum\limits_{\mathfrak{a} = 1}^{\frac{R}{2} - 1} \sum\limits_{k_\mathfrak{a} = 1}^{\frac{R}{2} - 1 - \sum\limits_{\mathfrak{b} = 1}^{\mathfrak{a} - 1} k_\mathfrak{b}} C_{k_1} \mathfrak{f}(\mathfrak{a}) (-1)^{\sum\limits_{\mathfrak{m} = 1}^\mathfrak{a} k_\mathfrak{m}} (\varepsilon \mu)^{-\sum\limits_{\mathfrak{m} = 1}^\mathfrak{a} k_\mathfrak{m}} (\nabla \times \nabla \times)^{\sum\limits_{\mathfrak{m} = 1}^\mathfrak{a} k_\mathfrak{m}} \nabla \times r_{\left( R/2 - \sum \limits_{m=1}^a k_m \right),E}^n}{\widetilde{H}}.
       \end{multline*}
 Next, using the values of the error terms $e_{p_h}^n$, $e_{E_h}^n$ and $e_{H_h}^n$ as in Equations~\labelcref{eqn:p_fullerror_lfR,eqn:E_fullerror_lfR,eqn:H_fullerror_lfR} in the above equations, we get:
 \begin{multline*}
  \aInnerproduct{\dfrac{\left(\eta^{n + \frac{1}{2}} - \eta^{n - \frac{1}{2}}\right) - \left( \eta^{n + \frac{1}{2}}_h - \eta^{n - \frac{1}{2}}_h \right)}{\Delta t}}{\widetilde{p}} - \aInnerproduct{\dfrac{\varepsilon}{2} \left( \left( \zeta^{n + \frac{1}{2}} + \zeta^{n - \frac{1}{2}} \right) - \left(\zeta_h^{n + \frac{1}{2}} + \zeta^{n - \frac{1}{2}}_h \right) \right)}{\nabla \widetilde{p}} \\ + \sum\limits_{\mathfrak{a} = 1}^{\frac{R}{2} - 1} \sum\limits_{k_\mathfrak{a} = 1}^{\frac{R}{2} - 1 - \sum\limits_{\mathfrak{b} = 1}^{\mathfrak{a} - 1} k_\mathfrak{b}} C_{k_1} \mathfrak{f}(\mathfrak{a}) \aInnerproduct{\dfrac{\varepsilon}{2} (\nabla \nabla \cdot)^{\sum\limits_{\mathfrak{m} = 1}^\mathfrak{a} k_\mathfrak{m}} \left( \left( \zeta^{n + \frac{1}{2}} + \zeta^{n - \frac{1}{2}} \right) - \left(\zeta_h^{n + \frac{1}{2}} + \zeta^{n - \frac{1}{2}}_h \right) \right)}{\nabla \widetilde{p}} \\ = \aInnerproduct{R_p^{n} + \varepsilon \nabla \cdot r_{R/2,E}^n - \sum\limits_{\mathfrak{a} = 1}^{\frac{R}{2} - 1} \sum\limits_{k_\mathfrak{a} = 1}^{\frac{R}{2} - 1 - \sum\limits_{\mathfrak{b} = 1}^{\mathfrak{a} - 1} k_\mathfrak{b}} C_{k_1} \mathfrak{f}(\mathfrak{a}) \varepsilon \nabla \cdot (\nabla \nabla \cdot)^{\sum\limits_{\mathfrak{m} = 1}^\mathfrak{a} k_\mathfrak{m}} r_{\left( R/2 - \sum \limits_{m=1}^a k_m \right),E}^n }{\widetilde{p}},
  \end{multline*}
  \begin{multline*}
   \aInnerproduct{\dfrac{1}{2} \nabla \left( \left( \eta^{n + \frac{1}{2}} + \eta^{n - \frac{1}{2}} \right) - \left( \eta^{n + \frac{1}{2}}_h + \eta^{n - \frac{1}{2}}_h \right) \right)}{\widetilde{E}} \\ - \sum\limits_{\mathfrak{a} = 1}^{\frac{R}{2} - 1} \sum\limits_{k_\mathfrak{a} = 1}^{\frac{R}{2} - 1 - \sum\limits_{\mathfrak{b} = 1}^{\mathfrak{a} - 1} k_\mathfrak{b}} C_{k_1} \mathfrak{f}(\mathfrak{a}) \aInnerproduct{\dfrac{1}{2} \nabla \left( \left( \eta^{n + \frac{1}{2}} + \eta^{n - \frac{1}{2}} \right) - \left( \eta^{n + \frac{1}{2}}_h + \eta^{n - \frac{1}{2}}_h \right) \right)}{\left( \nabla \nabla \cdot \right)^{\sum\limits_{\mathfrak{m} = 1}^\mathfrak{a} k_\mathfrak{m}} \widetilde{E}} \\ + \aInnerproduct{\varepsilon \dfrac{\left( \zeta^{n + \frac{1}{2}} - \zeta^{n - \frac{1}{2}} \right) - \left(\zeta^{n + \frac{1}{2}}_h - \zeta^{n - \frac{1}{2}}_h \right)}{\Delta t}}{\widetilde{E}} -  \aInnerproduct{\dfrac{1}{2} \left( \left( \xi^{n + 1} + \xi^{n} \right) - \left(\xi^{n + 1}_h - \xi^{n}_h \right) \right)}{\nabla \times \widetilde{E}} \\ + \sum\limits_{\mathfrak{a} = 1}^{\frac{R}{2} - 1} \sum\limits_{k_\mathfrak{a} = 1}^{\frac{R}{2} - 1 - \sum\limits_{\mathfrak{b} = 1}^{\mathfrak{a} - 1} k_\mathfrak{b}} C_{k_1} \mathfrak{f}( \mathfrak{a}) (-1)^{\sum\limits_{\mathfrak{m} = 1}^\mathfrak{a} k_\mathfrak{m}} \aInnerproduct{\dfrac{1}{2} ( \mu \varepsilon )^{-\sum\limits_{\mathfrak{m} = 1}^\mathfrak{a} k_\mathfrak{m}} \left( \nabla \times \nabla \times \right)^{\sum\limits_{\mathfrak{m} = 1}^\mathfrak{a} k_\mathfrak{m}} \left( \left( \xi^{n + 1} + \xi^{n} \right) - \left(\xi^{n + 1}_h - \xi^{n}_h \right) \right)}{\nabla \times \widetilde{E}} \\ =  \aInnerproduct{\varepsilon R_E^n + \nabla r_{R/2,p}^n - \nabla \times r_{R/2,H}^{n+\frac{1}{2}} - \sum\limits_{\mathfrak{a} = 1}^{\frac{R}{2} - 1} \sum\limits_{k_\mathfrak{a} = 1}^{\frac{R}{2} - 1 - \sum\limits_{\mathfrak{b} = 1}^{\mathfrak{a} - 1} k_\mathfrak{b}} C_{k_1} \mathfrak{f}(\mathfrak{a}) \left(\varepsilon \left( \nabla \nabla \cdot \right)^{\sum\limits_{\mathfrak{m} = 1}^\mathfrak{a} k_\mathfrak{m}} \nabla r_{\left( R/2 - \sum \limits_{m=1}^a k_m \right),p}^n \right. \\ - \left. (-1)^{\sum\limits_{\mathfrak{m} = 1}^\mathfrak{a} k_\mathfrak{m}} (\mu \varepsilon)^{-\sum\limits_{\mathfrak{m} = 1}^\mathfrak{a} k_\mathfrak{m}} \nabla \times (\nabla \times \nabla \times)^{\sum\limits_{\mathfrak{m} = 1}^\mathfrak{a} k_\mathfrak{m}} r_{\left( R/2 - \sum \limits_{m=1}^a k_m \right),H}^{n+\frac{1}{2}}\right)}{\widetilde{E}},
  \end{multline*}
  \begin{multline*}
  \aInnerproduct{\mu \dfrac{\left(\xi^{n + 1} - \xi^{n} \right) - \left( \xi^{n + 1}_h - \xi^{n}_h \right)}{\Delta t}}{\widetilde{H}} +  \aInnerproduct{\dfrac{1}{2} \nabla \times \left( \left( \zeta^{n + \frac{1}{2}} + \zeta^{n - \frac{1}{2}} \right) - \left( \zeta^{n + \frac{1}{2}}_h + \zeta^{n - \frac{1}{2}}_h \right) \right)}{\widetilde{H}} \\ - \sum\limits_{\mathfrak{a} = 1}^{\frac{R}{2} - 1} \sum\limits_{k_\mathfrak{a} = 1}^{\frac{R}{2} - 1 - \sum\limits_{\mathfrak{b} = 1}^{\mathfrak{a} - 1} k_\mathfrak{b}} C_{k_1} \mathfrak{f}(\mathfrak{a}) (-1)^{\sum\limits_{\mathfrak{m} = 1}^\mathfrak{a} k_\mathfrak{m}} \\ \left(\aInnerproduct{\dfrac{1}{2} (\varepsilon \mu)^{-\sum\limits_{\mathfrak{m} = 1}^\mathfrak{a} k_\mathfrak{m}} \nabla \times \left( \left( \zeta^{n + \frac{1}{2}} + \zeta^{n - \frac{1}{2}} \right) - \left( \zeta^{n + \frac{1}{2}}_h + \zeta^{n - \frac{1}{2}}_h \right) \right)}{\left( \nabla \times  \nabla \times \right)^{\sum\limits_{\mathfrak{m} = 1}^\mathfrak{a} k_\mathfrak{m}} \widetilde{H}}\right) = \aInnerproduct{\mu R_H^{n + \frac{1}{2}} + \nabla \times r_{R/2,E}^n \\ +  \sum\limits_{\mathfrak{a} = 1}^{\frac{R}{2} - 1} \sum\limits_{k_\mathfrak{a} = 1}^{\frac{R}{2} - 1 - \sum\limits_{\mathfrak{b} = 1}^{\mathfrak{a} - 1} k_\mathfrak{b}} C_{k_1} \mathfrak{f}(\mathfrak{a}) (-1)^{\sum\limits_{\mathfrak{m} = 1}^\mathfrak{a} k_\mathfrak{m}} (\varepsilon \mu)^{-\sum\limits_{\mathfrak{m} = 1}^\mathfrak{a} k_\mathfrak{m}} (\nabla \times \nabla \times)^{\sum\limits_{\mathfrak{m} = 1}^\mathfrak{a} k_\mathfrak{m}} \nabla \times r_{\left( R/2 - \sum \limits_{m=1}^a k_m \right),E}^n}{\widetilde{H}}.
\end{multline*}
Since these equations are true for all $(\widetilde{p}, \widetilde{E}, \widetilde{H}) \in U_h \times V_h \times W_h$, we choose $\widetilde{p} = -2 \Delta t \varepsilon^{-1} \left( \eta_h^{n + \frac{1}{2}} + \eta_h^{n - \frac{1}{2}}\right)$, $\widetilde{E} = -2 \Delta t \left( \zeta_h^{n + \frac{1}{2}} + \zeta_h^{n - \frac{1}{2}} \right)$ and $\widetilde{H} = -2 \Delta t \left( \xi_h^n + \xi_h^{n - 1} \right)$ and using the fact that $\nabla U_h \subseteq V_h$ and $\nabla \times V_h \subseteq W_h$ and so on, we obtain that:
\begin{multline}
2 \ainnerproduct{\varepsilon^{-1} \left( \eta^{n + \frac{1}{2}}_h - \eta^{n - \frac{1}{2}}_h \right)}{\eta^{n + \frac{1}{2}}_h + \eta^{n - \frac{1}{2}}_h} + 2 \ainnerproduct{\varepsilon \left( \zeta^{n + \frac{1}{2}}_h - \zeta^{n - \frac{1}{2}}_h \right)}{\zeta^{n + \frac{1}{2}}_h + \zeta^{n - \frac{1}{2}}_h} \, + 2 \ainnerproduct{\mu \left( \xi^n_h - \xi^{n - 1}_h \right)}{\xi^n_h + \xi^{n - 1}_h} \\ 
= 2 \ainnerproduct{\varepsilon^{-1} \left( \eta^{n + \frac{1}{2}} - \eta^{n -  \frac{1}{2}} \right)}{\eta^{n + \frac{1}{2}}_h + \eta^{n - \frac{1}{2}}_h} \, + 2 \ainnerproduct{\varepsilon \left( \zeta^{n + \frac{1}{2}} - \zeta^{n - \frac{1}{2}} \right)}{\zeta^{n + \frac{1}{2}}_h + \zeta^{n - \frac{1}{2}}_h} + 2 \ainnerproduct{\mu \left( \xi^n - \xi^{n - 1} \right)}{\xi^n_h + \xi^{n - 1}_h} \\ 
+ 2 \Delta t \ainnerproduct{- R_p^n - \varepsilon \nabla \cdot r_{R/2,E}^n + \sum\limits_{\mathfrak{a} = 1}^{\frac{R}{2} - 1} \sum\limits_{k_\mathfrak{a} = 1}^{\frac{R}{2} - 1 - \sum\limits_{\mathfrak{b} = 1}^{\mathfrak{a} - 1} k_\mathfrak{b}} C_{k_1} \mathfrak{f}(\mathfrak{a}) \varepsilon \nabla \cdot (\nabla \nabla \cdot)^{\sum\limits_{\mathfrak{m} = 1}^\mathfrak{a} k_\mathfrak{m}} r_{\left( R/2 - \sum \limits_{m=1}^a k_m \right),E}^n}{\varepsilon^{-1} \left( \eta^{n + \frac{1}{2}}_h + \eta^{n - \frac{1}{2}}_h \right)} \\
 + 2 \Delta t \ainnerproduct{-\varepsilon R_E^n - \nabla r_{R/2,p}^n + \nabla \times r_{R/2,H}^{n+\frac{1}{2}} + \sum\limits_{\mathfrak{a} = 1}^{\frac{R}{2} - 1} \sum\limits_{k_\mathfrak{a} = 1}^{\frac{R}{2} - 1 - \sum\limits_{\mathfrak{b} = 1}^{\mathfrak{a} - 1} k_\mathfrak{b}} C_{k_1} \mathfrak{f}(\mathfrak{a}) \left(\varepsilon \left( \nabla \nabla \cdot \right)^{\sum\limits_{\mathfrak{m} = 1}^\mathfrak{a} k_\mathfrak{m}} \nabla r_{\left( R/2 - \sum \limits_{m=1}^a k_m \right),p}^n \right. \\ - \left. (-1)^{\sum\limits_{\mathfrak{m} = 1}^\mathfrak{a} k_\mathfrak{m}} (\mu \varepsilon)^{-\sum\limits_{\mathfrak{m} = 1}^\mathfrak{a} k_\mathfrak{m}} \nabla \times (\nabla \times \nabla \times)^{\sum\limits_{\mathfrak{m} = 1}^\mathfrak{a} k_\mathfrak{m}} r_{\left( R/2 - \sum \limits_{m=1}^a k_m \right),H}^{n+\frac{1}{2}}\right)}{\zeta^{n + \frac{1}{2}}_h + \zeta^{n - \frac{1}{2}}_h}   \\ 
 + 2 \Delta t \ainnerproduct{-\mu R_H^{n + \frac{1}{2}} - \nabla \times r_{R/2,E}^n \\ - \sum\limits_{\mathfrak{a} = 1}^{\frac{R}{2} - 1} \sum\limits_{k_\mathfrak{a} = 1}^{\frac{R}{2} - 1 - \sum\limits_{\mathfrak{b} = 1}^{\mathfrak{a} - 1} k_\mathfrak{b}} C_{k_1} \mathfrak{f}(\mathfrak{a}) (-1)^{\sum\limits_{\mathfrak{m} = 1}^\mathfrak{a} k_\mathfrak{m}} (\varepsilon \mu)^{-\sum\limits_{\mathfrak{m} = 1}^\mathfrak{a} k_\mathfrak{m}} (\nabla \times \nabla \times)^{\sum\limits_{\mathfrak{m} = 1}^\mathfrak{a} k_\mathfrak{m}} \nabla \times r_{\left( R/2 - \sum \limits_{m=1}^a k_m \right),E}^n}{\xi^n_h + \xi^{n - 1}_h}. \label{eqn:suberror_p+E+H_lf4}
\end{multline}
Consider that  $\varepsilon^{-1} \left(\eta^{n + \frac{1}{2}} - \eta^{n - \frac{1}{2}}\right) = \varepsilon^{-1} \left(I - \Pi_h^0\right) \left( p(t^{n + \frac{1}{2}}) - p(t^{n - \frac{1}{2}})\right)$ by Equation~\eqref{eqn:p_fullerror_sub_lfR}. Using the Taylor theorem with remainder as in Theorem~\ref{thm:dscrt_error_estmt_lfR}, applying the Cauchy-Schwarz, AM-GM, and triangle inequalities, we have the following inequality:
\begin{align*}
2 \ainnerproduct{\varepsilon^{-1} \left( \eta^{n + \frac{1}{2}} - \eta^{n - \frac{1}{2}} \right)}{\eta^{n + \frac{1}{2}}_h + \eta^{n - \frac{1}{2}}_h} & = 2 \Delta t \ainnerproduct{\varepsilon^{-1} \left( I - \Pi_h^0 \right) \sum \limits_{k=0}^{\frac{R}{2} - 1} \dfrac{1}{(2k+1)!} \dfrac{\Delta t^{2k}}{2^{2k}} \dfrac{\partial^{2k+1} p}{\partial t^{2k+1}}(t^n)}{\eta^{n + \frac{1}{2}}_h + \eta^{n - \frac{1}{2}}_h} \\ & + 2 \Delta t \ainnerproduct{\varepsilon^{-1} \left( I - \Pi_h^0 \right) R_p^n}{\eta^{n + \frac{1}{2}}_h + \eta^{n - \frac{1}{2}}_h} \\ 
 & \le \Delta t \bigg[\sum \limits_{k=0}^{\frac{R}{2} - 1} \dfrac{1}{(2k+1)!} \dfrac{\Delta t^{2k}}{2^{2k}} \norm[\bigg]{(I - \Pi_h^0) \dfrac{\partial^{2k+1} p}{\partial t^{2k+1}}(t^n)}^2_{\varepsilon^{-1}} \!\! \\ & +  \norm[\bigg]{(I - \Pi_h^0) R_p^n}^2_{\varepsilon^{-1}} \bigg] \! + (R+2) \Delta t \bigg[ \norm{\eta_h^{n + \frac{1}{2}}}^2_{\varepsilon^{-1}} + \norm{\eta_h^{n - \frac{1}{2}}}^2_{\varepsilon^{-1}} \bigg].
\end{align*}
Similarly, using Equations~\labelcref{eqn:E_fullerror_sub_lfR,eqn:H_fullerror_sub_lfR} for the error terms for $E$ and $H$, we obtain that:
\begin{gather*}
\begin{split}
2  \ainnerproduct{\varepsilon \left( \zeta^{n + \frac{1}{2}} - \zeta^{n - \frac{1}{2}} \right)}{\zeta^{n + \frac{1}{2}}_h + \zeta^{n - \frac{1}{2}}_h} \le \Delta t \bigg[\sum \limits_{k=0}^{\frac{R}{2} - 1} \dfrac{1}{(2k+1)!} \dfrac{\Delta t^{2k}}{2^{2k}} \norm[\bigg]{(I - \Pi_h^1) \dfrac{\partial^{2k+1} E}{\partial t^{2k+1}}(t^n)}^2_{\varepsilon} \!\! \\  +  \norm[\bigg]{(I - \Pi_h^1) R_E^n}^2_{\varepsilon} \bigg] \! + (R+2) \Delta t \bigg[ \norm{\zeta_h^{n + \frac{1}{2}}}^2_{\varepsilon} + \norm{\zeta_h^{n - \frac{1}{2}}}^2_{\varepsilon} \bigg],
\end{split} \\ 
\begin{split}
2 \ainnerproduct{\mu \left( \xi^{n + 1} - \xi^n \right)}{\xi^{n + 1}_h + \xi^n_h} \le \Delta t \bigg[\sum \limits_{k=0}^{\frac{R}{2} - 1} \dfrac{1}{(2k+1)!} \dfrac{\Delta t^{2k}}{2^{2k}} \norm[\bigg]{(I - \Pi_h^2) \dfrac{\partial^{2k+1} H}{\partial t^{2k+1}}(t^{n + \frac{1}{2}})}^2_{\mu} \!\! \\  +  \norm[\bigg]{(I - \Pi_h^2) R_H^{n + \frac{1}{2}}}^2_{\mu} \bigg] \! + (R+2) \Delta t \bigg[ \norm{\xi_h^{n + 1}}^2_{\mu} + \norm{\xi_h^n}^2_{\mu} \bigg].
\end{split}
\end{gather*}
Using these inequalities for $\eta$, $\zeta$ and $\xi$ in Equation~\eqref{eqn:suberror_p+E+H_lf4}, and summing from $n = 1$ to $N-1$, we get that:
\begin{multline*}
  \norm{\eta_h^{N - \frac{1}{2}}}^2_{\varepsilon^{-1}} + \norm{\zeta_h^{N - \frac{1}{2}}}^2_{\varepsilon} + \norm{\xi_h^N}^2_{\mu} \le (R+4) \Delta t \sum\limits_{n = 0}^{N-1} \bigg[ \norm{\eta_h^{n + \frac{1}{2}}}^2_{\varepsilon^{-1}} +\norm{\zeta_h^{n + \frac{1}{2}}}^2_{\varepsilon} + \norm{\xi_h^n}^2_{\mu} \bigg] \\+ \Delta t \sum\limits_{n = 0}^{N} \bigg[\sum \limits_{k=0}^{\frac{R}{2} - 1} \left(\dfrac{1}{(2k+1)!}\right)^2 \dfrac{\Delta t^{4k}}{2^{4k}} \left(\norm[\bigg]{(I - \Pi_h^0) \dfrac{\partial^{2k+1} p}{\partial t^{2k+1}}(t^n)}^2_{\varepsilon^{-1}} + \norm[\bigg]{(I - \Pi_h^1) \dfrac{\partial^{2k+1} E}{\partial t^{2k+1}}(t^{n - \frac{1}{2}})}^2_{\varepsilon} \right. \\ \left. + \norm[\bigg]{(I - \Pi_h^2) \dfrac{\partial^{2k+1} H}{\partial t^{2k+1}}(t^{n + \frac{1}{2}})}^2_\mu\right) + 
  \norm[\bigg]{(I - \Pi_h^0) R_p^n}^2_{\varepsilon^{-1}} + \norm[\bigg]{(I - \Pi_h^1) R_E^n}^2_{\varepsilon} + \norm[\bigg]{(I - \Pi_h^2) R_H^n}^2_\mu  \\ + \norm{R_p^n}^2_{\varepsilon^{-1}} + \norm{R_E^n}^2_{\varepsilon} + \norm{R_H^{n + \frac{1}{2}}}^2_{\mu} \norm{\nabla r_{R/2,p}^n} ^2_{\varepsilon^{-1}} + \norm{\nabla \cdot r_{R/2,E}^n}^2_{\varepsilon} + \varepsilon^{-1} \mu^{-1} \norm{\nabla \times r_{R/2,E}^n}^2_{\varepsilon} +  \varepsilon^{-1} \mu^{-1} \norm{\nabla \times r_{R/2,H}^{n + \frac{1}{2}}}^2_{\mu}  \\ 
 +\sum\limits_{\mathfrak{a} = 1}^{\frac{R}{2} - 1} \sum\limits_{k_\mathfrak{a} = 1}^{\frac{R}{2} - 1 - \sum\limits_{\mathfrak{b} = 1}^{\mathfrak{a} - 1} k_\mathfrak{b}} C_{k_1}^2 \mathfrak{f}(\mathfrak{a})^2 \left(\norm{\left( \nabla \nabla \cdot \right)^{\sum\limits_{\mathfrak{m} = 1}^\mathfrak{a} k_\mathfrak{m}} \nabla r_{\left( R/2 - \sum \limits_{m=1}^a k_m \right),p}^n}^2_{\varepsilon^{-1}} + \norm{\nabla \cdot (\nabla \nabla \cdot)^{\sum\limits_{\mathfrak{m} = 1}^\mathfrak{a} k_\mathfrak{m}} r_{\left( R/2 - \sum \limits_{m=1}^a k_m \right),E}^n }^2_\varepsilon  \right. \\ \left. + (\varepsilon \mu)^{-(2 \sum\limits_{\mathfrak{m} = 1}^\mathfrak{a} k_\mathfrak{m} + 1)} \norm{(\nabla \times \nabla \times)^{\sum\limits_{\mathfrak{m} = 1}^\mathfrak{a} k_\mathfrak{m}} \nabla \times r_{\left( R/2 - \sum \limits_{m=1}^a k_m \right),E}^n}^2_\varepsilon \right. \\ \left. + (\mu \varepsilon)^{-(2 \sum\limits_{\mathfrak{m} = 1}^\mathfrak{a} k_\mathfrak{m} + 1)} \norm{ \nabla \times (\nabla \times \nabla \times)^{\sum\limits_{\mathfrak{m} = 1}^\mathfrak{a} k_\mathfrak{m}} r_{\left( R/2 - \sum \limits_{m=1}^a k_m \right),H}^{n+\frac{1}{2}}}^2_\mu \right) \bigg] + \left[ \norm{\eta_h^\frac{1}{2}}^2_{\varepsilon^{-1}} + \norm{\zeta_h^\frac{1}{2}}^2_{\varepsilon} + \norm{\xi_h^1}^2_{\mu} \right].
\end{multline*}
Likewise, for the semidiscrete approximation of the initial system as in Equations~\labelcref{eqn:maxwell_p0_lfR,eqn:maxwell_E0_lfR,eqn:maxwell_H0_lfR}, following the same steps as for general one, we obtain for their errors the following inequality:
 \begin{multline*}
   \norm{\eta_h^{\frac{1}{2}}}^2_{\varepsilon^{-1}} + \norm{\zeta_h^{\frac{1}{2}}}^2_{\varepsilon} + \norm{\xi_h^1}^2_{\mu} \le \dfrac{(R+4)}{2} \Delta t \bigg[ \norm{\eta_h^{\frac{1}{2}}}^2_{\varepsilon^{-1}} + \norm{\eta_h^0}^2_{\varepsilon^{-1}} +\norm{\zeta_h^{ \frac{1}{2}}}^2_{\varepsilon} + \norm{\zeta_h^0}^2_{\varepsilon} + \norm{\xi_h^1}^2_{\mu} + \norm{\xi_h^0}^2_{\mu}\bigg] \\+ \Delta t \left[ \norm{R_p^0}^2_{\varepsilon^{-1}} + \norm{R_E^0}^2_{\varepsilon} + \norm{R_H^{\frac{1}{2}}}^2_{\mu} + \norm{\nabla r_{R/2,p}^0} ^2_{\varepsilon^{-1}} + \norm{\nabla \cdot r_{R/2,E}^0}^2_{\varepsilon} + \varepsilon^{-1} \mu^{-1} \norm{\nabla \times r_{R/2,E}^0}^2_{\varepsilon}  \right. \\ \left. +  \varepsilon^{-1} \mu^{-1} \norm{\nabla \times r_{R/2,H}^{\frac{1}{2}}}^2_{\mu} +\sum\limits_{\mathfrak{a} = 1}^{\frac{R}{2} - 1} \sum\limits_{k_\mathfrak{a} = 1}^{\frac{R}{2} - 1 - \sum\limits_{\mathfrak{b} = 1}^{\mathfrak{a} - 1} k_\mathfrak{b}} \dfrac{C_{k_1}^2}{4^{2 \sum\limits_{\mathfrak{m} = 1}^\mathfrak{a} k_\mathfrak{m} + 1}} \mathfrak{f}(\mathfrak{a})^2 \left(\norm{\left( \nabla \nabla \cdot \right)^{\sum\limits_{\mathfrak{m} = 1}^\mathfrak{a} k_\mathfrak{m}} \nabla r_{\left( R/2 - \sum \limits_{m=1}^a k_m \right),p}^0}^2_{\varepsilon^{-1}} \right. \right. \\ \left. \left. + \norm{\nabla \cdot (\nabla \nabla \cdot)^{\sum\limits_{\mathfrak{m} = 1}^\mathfrak{a} k_\mathfrak{m}} r_{\left( R/2 - \sum \limits_{m=1}^a k_m \right),E}^0}^2_\varepsilon + (\varepsilon \mu)^{-(2 \sum\limits_{\mathfrak{m} = 1}^\mathfrak{a} k_\mathfrak{m} + 1)} \norm{(\nabla \times \nabla \times)^{\sum\limits_{\mathfrak{m} = 1}^\mathfrak{a} k_\mathfrak{m}} \nabla \times r_{\left( R/2 - \sum \limits_{m=1}^a k_m \right),E}^0}^2_\varepsilon \right. \right. \\ \left. \left.+ (\mu \varepsilon)^{-(2 \sum\limits_{\mathfrak{m} = 1}^\mathfrak{a} k_\mathfrak{m} + 1)} \norm{ \nabla \times (\nabla \times \nabla \times)^{\sum\limits_{\mathfrak{m} = 1}^\mathfrak{a} k_\mathfrak{m}} r_{\left( R/2 - \sum \limits_{m=1}^a k_m \right),H}^{\frac{1}{2}}}^2_\mu \right) \right] + \left[\norm{\eta_h^0}^2_{\varepsilon^{-1}} + \norm{\zeta_h^0}^2_{\varepsilon} + \norm{\xi_h^0}^2_{\mu}  \right].
  \end{multline*} 
Adding the last two inequalities, we arrive at the following resulting inequality:
\begin{multline*}
  \norm{\eta_h^{N - \frac{1}{2}}}^2_{\varepsilon^{-1}} + \norm{\zeta_h^{N - \frac{1}{2}}}^2_{\varepsilon} + \norm{\xi_h^N}^2_{\mu} \le \\
  \left(\dfrac{1 + \dfrac{R+4}{2} \Delta t}{1 - \dfrac{R+4}{2} \Delta t} \right) \Bigg[ \norm{\eta_h^0}^2_{\varepsilon^{-1}} + \norm{\zeta_h^0}^2_{\varepsilon} + \norm{\xi_h^0}^2_{\mu} \Bigg] + \left(\dfrac{(R+4) \Delta t}{1 - \dfrac{R+4}{2} \Delta t} \right) \sum\limits_{n = 0}^{N - 1} \Bigg[ \norm{\eta_h^{n + \frac{1}{2}}}^2_{\varepsilon^{-1}} + \norm{\zeta_h^{n + \frac{1}{2}}}^2_{\varepsilon} +  \norm{\xi_h^{n + 1}}^2_{\mu} \Bigg] + \\
  \left( \dfrac{ \Delta t}{1 - \dfrac{R+4}{2} \Delta t} \right) \sum\limits_{n = 0}^{N - 1} \Bigg[\sum \limits_{k=0}^{\frac{R}{2} - 1} \left(\dfrac{1}{(2k+1)!}\right)^2 \dfrac{\Delta t^{4k}}{2^{4k}} \left(\norm[\bigg]{(I - \Pi_h^0) \dfrac{\partial^{2k+1} p}{\partial t^{2k+1}}(t^n)}^2_{\varepsilon^{-1}} + \norm[\bigg]{(I - \Pi_h^1) \dfrac{\partial^{2k+1} E}{\partial t^{2k+1}}(t^n)}^2_{\varepsilon} \right. \\ + \left. \norm[\bigg]{(I - \Pi_h^2) \dfrac{\partial^{2k+1} H}{\partial t^{2k+1}}(t^{n + \frac{1}{2}})}^2_\mu\right) + 
  \norm[\bigg]{(I - \Pi_h^0) R_p^n}^2_{\varepsilon^{-1}} + \norm[\bigg]{(I - \Pi_h^1) R_E^n}^2_{\varepsilon} + \norm[\bigg]{(I - \Pi_h^2) R_H^{n + \frac{1}{2}}}^2_\mu  + \norm{R_p^n}^2_{\varepsilon^{-1}} + \norm{R_E^n}^2_{\varepsilon} \\ + \norm{R_H^{n + \frac{1}{2}}}^2_{\mu} + \norm{\nabla r_{R/2,p}^n} ^2_{\varepsilon^{-1}} + \norm{\nabla \cdot r_{R/2,E}^n}^2_{\varepsilon} + \varepsilon^{-1} \mu^{-1} \norm{\nabla \times r_{R/2,E}^n}^2_{\varepsilon} +  \varepsilon^{-1} \mu^{-1} \norm{\nabla \times r_{R/2,H}^{n + \frac{1}{2}}}^2_{\mu}  \\ 
 +\sum\limits_{\mathfrak{a} = 1}^{\frac{R}{2} - 1} \sum\limits_{k_\mathfrak{a} = 1}^{\frac{R}{2} - 1 - \sum\limits_{\mathfrak{b} = 1}^{\mathfrak{a} - 1} k_\mathfrak{b}} C_{k_1}^2 \mathfrak{f}(\mathfrak{a})^2 \left(\norm{\left( \nabla \nabla \cdot \right)^{\sum\limits_{\mathfrak{m} = 1}^\mathfrak{a} k_\mathfrak{m}} \nabla r_{\left( R/2 - \sum \limits_{m=1}^a k_m \right),p}^n}^2_{\varepsilon^{-1}} + \norm{\nabla \cdot (\nabla \nabla \cdot)^{\sum\limits_{\mathfrak{m} = 1}^\mathfrak{a} k_\mathfrak{m}} r_{\left( R/2 - \sum \limits_{m=1}^a k_m \right),E}^n }^2_\varepsilon  \right. \\ \left. + (\varepsilon \mu)^{-(2 \sum\limits_{\mathfrak{m} = 1}^\mathfrak{a} k_\mathfrak{m} + 1)} \norm{(\nabla \times \nabla \times)^{\sum\limits_{\mathfrak{m} = 1}^\mathfrak{a} k_\mathfrak{m}} \nabla \times r_{\left( R/2 - \sum \limits_{m=1}^a k_m \right),E}^n}^2_\varepsilon \right. \\ \left.+ (\mu \varepsilon)^{-(2 \sum\limits_{\mathfrak{m} = 1}^\mathfrak{a} k_\mathfrak{m} + 1)} \norm{ \nabla \times (\nabla \times \nabla \times)^{\sum\limits_{\mathfrak{m} = 1}^\mathfrak{a} k_\mathfrak{m}} r_{\left( R/2 - \sum \limits_{m=1}^a k_m \right),H}^{n+\frac{1}{2}}}^2_\mu \right) \Bigg].
\end{multline*}
We now apply the discrete Gronwall inequality with $\Delta t < \dfrac{1}{3(R+4)}$ to get that:
\begin{multline*}
  \norm{\eta_h^{N - \frac{1}{2}}}^2_{\varepsilon^{-1}} + \norm{\zeta_h^{N - \frac{1}{2}}}^2_{\varepsilon} + \norm{\xi_h^N}^2_{\mu} \le \Bigg[ \dfrac{6 \Delta t}{5} \sum\limits_{n = 0}^{N - 1} \Bigg(\sum \limits_{k=0}^{\frac{R}{2} - 1} \left(\dfrac{1}{(2k+1)!}\right)^2 \dfrac{\Delta t^{4k}}{2^{4k}} \left(\norm[\bigg]{(I - \Pi_h^0) \dfrac{\partial^{2k+1} p}{\partial t^{2k+1}}(t^n)}^2_{\varepsilon^{-1}} \right. \\ \left. + \norm[\bigg]{(I - \Pi_h^1) \dfrac{\partial^{2k+1} E}{\partial t^{2k+1}}(t^n)}^2_{\varepsilon} + \norm[\bigg]{(I - \Pi_h^2) \dfrac{\partial^{2k+1} H}{\partial t^{2k+1}}(t^{n + \frac{1}{2}})}^2_\mu\right) + 
  \norm[\bigg]{(I - \Pi_h^0) R_p^n}^2_{\varepsilon^{-1}} + \norm[\bigg]{(I - \Pi_h^1) R_E^n}^2_{\varepsilon} \\ + \norm[\bigg]{(I - \Pi_h^2) R_H^{n + \frac{1}{2}}}^2_\mu  + \norm{R_p^n}^2_{\varepsilon^{-1}} + \norm{R_E^n}^2_{\varepsilon}  + \norm{R_H^{n + \frac{1}{2}}}^2_{\mu} + \norm{\nabla r_{R/2,p}^n} ^2_{\varepsilon^{-1}} + \norm{\nabla \cdot r_{R/2,E}^n}^2_{\varepsilon} + \varepsilon^{-1} \mu^{-1} \norm{\nabla \times r_{R/2,E}^n}^2_{\varepsilon} \\ + \varepsilon^{-1} \mu^{-1} \norm{\nabla \times r_{R/2,H}^{n + \frac{1}{2}}}^2_{\mu} +\sum\limits_{\mathfrak{a} = 1}^{\frac{R}{2} - 1} \sum\limits_{k_\mathfrak{a} = 1}^{\frac{R}{2} - 1 - \sum\limits_{\mathfrak{b} = 1}^{\mathfrak{a} - 1} k_\mathfrak{b}} C_{k_1}^2 \mathfrak{f}(\mathfrak{a})^2 \left(\norm{\left( \nabla \nabla \cdot \right)^{\sum\limits_{\mathfrak{m} = 1}^\mathfrak{a} k_\mathfrak{m}} \nabla r_{\left( R/2 - \sum \limits_{m=1}^a k_m \right),p}^n}^2_{\varepsilon^{-1}} \right. \\ \left. + \norm{\nabla \cdot (\nabla \nabla \cdot)^{\sum\limits_{\mathfrak{m} = 1}^\mathfrak{a} k_\mathfrak{m}} r_{\left( R/2 - \sum \limits_{m=1}^a k_m \right),E}^n }^2_\varepsilon + (\varepsilon \mu)^{-(2 \sum\limits_{\mathfrak{m} = 1}^\mathfrak{a} k_\mathfrak{m} + 1)} \norm{(\nabla \times \nabla \times)^{\sum\limits_{\mathfrak{m} = 1}^\mathfrak{a} k_\mathfrak{m}} \nabla \times r_{\left( R/2 - \sum \limits_{m=1}^a k_m \right),E}^n}^2_\varepsilon \right. \\ \left. + (\mu \varepsilon)^{-(2 \sum\limits_{\mathfrak{m} = 1}^\mathfrak{a} k_\mathfrak{m} + 1)} \norm{ \nabla \times (\nabla \times \nabla \times)^{\sum\limits_{\mathfrak{m} = 1}^\mathfrak{a} k_\mathfrak{m}} r_{\left( R/2 - \sum \limits_{m=1}^a k_m \right),H}^{n+\frac{1}{2}}}^2_\mu \right) \Bigg) \\ + \dfrac{7}{5} \Big( \norm{\eta_h^0}^2_{\varepsilon^{-1}} + \norm{\zeta_h^0}^2_{\varepsilon} + \norm{\xi_h^0}^2_{\mu} \Big) \Bigg] \exp \left(2(R+4) T \right).
\end{multline*}
Using our estimates for the Taylor remainders from Theorem~\labelcref{thm:dscrt_error_estmt_lfR} for these terms on the right-hand side of the above inequality, we further get that:
\begin{multline*}
  \Delta t \sum\limits_{n = 0}^N \Big[ \norm{R^n_p}^2_{\varepsilon^{-1}} + \norm{R^n_E}^2_{\varepsilon} + \norm{R^{n + \frac{1}{2}}_H}^2_\mu \Big] \le \left(\Delta t \right)^{2R} \left[\norm[\bigg]{\dfrac{\partial^{R+1} p}{\partial t^{R+1}}}^2_{L^2(0, T; L^2_{\varepsilon^{-1}}(\Omega))} \!\! + \, \norm[\bigg]{\dfrac{\partial^{R+1} E}{\partial t^{R+1}}}^2_{L^2(0, T; L^2_\varepsilon(\Omega))} \!\! \right. \\ \left. + \, \norm[\bigg]{\dfrac{\partial^{R+1} H}{\partial t^{R+1}}}^2_{L^2(0, T; L^2_\mu(\Omega))} \right].
\end{multline*}
Now, for $2 \leqslant S \leqslant R$, $S$ even, $\ell > 0$, and $\Phi$ to be an appropriate operator acting on functions from correspondingly appropriate function spaces on $\Omega$ (as in the proof of Theorem~\labelcref{thm:dscrt_error_estmt_lfR}), we have that:
\begin{multline*}
  \Delta t \sum\limits_{n = 0}^{N - 1} \left(\Delta t\right)^{2(R-S)} \left[\norm{\Phi (r^n_{S/2,p})}^2_{\varepsilon^{-1}} + \left(\mu \varepsilon \right)^{-\ell} \norm{\Phi (r^n_{S/2,E})}^2_{\varepsilon} + \left(\mu \varepsilon \right)^{-\ell}  \norm{\Phi (r^{n + \frac{1}{2}}_{S/2,H})}^2_\mu \right] \\
  \le  \left( \Delta t \right)^{2R} \left[ \norm[\bigg]{\dfrac{\partial^S \left(\Phi (p) \right)}{\partial t^S}}^2_{L^2(0, T; L^2_{\varepsilon^{-1}}(\Omega))} \!\! + \, \left(\mu \varepsilon \right)^{-\ell} \norm[\bigg]{\dfrac{\partial^S \left(\Phi (E) \right)}{\partial t^S}}^2_{L^2(0, T; L^2_\varepsilon(\Omega))} \!\! + \, \left(\mu \varepsilon \right)^{-\ell} \norm[\bigg]{\dfrac{\partial^S (\Phi (H))}{\partial t^S}}^2_{L^2(0, T; L^2_\mu(\Omega))} \right].
\end{multline*}
Next, for $p \in \mathring{H}_{\varepsilon^{-1}}^1(\Omega)$, $E \in \mathring{H}_\varepsilon(\curl; \Omega)$ and $H \in \mathring{H}_\mu(\divgn; \Omega)$, there exists positive bounded constants $C_{0, p}$, $C_{0, E}$, $C_{0, H}$, $C_{m, p}$, $C_{m, E}$, $C_{m, H}$, for $0 \leqslant m \leqslant \dfrac{R}{2} -1$ such that we have the following error bounds for the $L^2$ projections:
\begin{alignat*}{3}
 \norm[\bigg]{(I - \Pi_h^0) R_p^n}_{\varepsilon^{-1}} & \le C_{0, p} h^r \norm[\bigg]{R_p^n}_{\varepsilon^{-1}}, && \qquad   \norm[\bigg]{(I - \Pi_h^0) \dfrac{\partial^{2m + 1} p}{\partial t^{2m + 1}}(t^n)}_{\varepsilon^{-1}} && \le C_{m, p} h^r \norm[\bigg]{\dfrac{\partial^{2m + 1} p}{\partial t^{2m + 1}}(t^n)}_{\varepsilon^{-1}},  \\
 \norm[\bigg]{(I - \Pi_h^1) R_E^n}_{\varepsilon} & \le C_{0, E} h^r \norm[\bigg]{R_E^n}_{\varepsilon}, && \qquad  \norm[\bigg]{(I - \Pi_h^1) \dfrac{\partial^{2m + 1} E}{\partial t^{2m + 1}}(t^n)}_{\varepsilon} && \le C_{m, E} h^r \norm[\bigg]{\dfrac{\partial^{2m + 1} E}{\partial t^{2m + 1}}(t^n)}_{\varepsilon}, \\
\norm[\bigg]{(I - \Pi_h^2) R_H^{n + \frac{1}{2}}}_{\mu} & \le C_{0, H} h^r \norm[\bigg]{R_H^{n + \frac{1}{2}}}_{\mu}, && \qquad   \norm[\bigg]{(I - \Pi_h^2) \dfrac{\partial^{2m + 1} H}{\partial t^{2m + 1}}(t^{n + \frac{1}{2}})}_{\mu} && \le C_{m, H} h^r \norm[\bigg]{\dfrac{\partial^{2m + 1} H}{\partial t^{2m + 1}}(t^{n + \frac{1}{2}})}_{\mu}.
\end{alignat*}
Set $C_0 \coloneq \max \{C_{0, p}, C_{0, E}, C_{0, H}, \max \limits_{0 \leqslant m \leqslant R/2} \{ C_{m, p}, C_{m, E}, C_{m, H} \} \}$. We therefore have that:
 \begin{multline*}
 \Delta t \sum\limits_{n = 0}^{N} \left[\sum \limits_{k=0}^{\frac{R}{2} - 1} \left(\dfrac{1}{(2k+1)!}\right)^2 \dfrac{\Delta t^{4k}}{2^{4k}} \left(\norm[\bigg]{(I - \Pi_h^0) \dfrac{\partial^{2k+1} p}{\partial t^{2k+1}}(t^n)}^2_{\varepsilon^{-1}} + \norm[\bigg]{(I - \Pi_h^1) \dfrac{\partial^{2k+1} E}{\partial t^{2k+1}}(t^n)}^2_{\varepsilon} \right.\right. \\ \left.\left. + \norm[\bigg]{(I - \Pi_h^2) \dfrac{\partial^{2k+1} H}{\partial t^{2k+1}}(t^{n + \frac{1}{2}})}^2_\mu\right)\right] \\
\le C_0 h^{2 r} \sum\limits_{n = 0}^{N-1} \Delta t \left[\sum \limits_{k=0}^{\frac{R}{2} - 1} \left(\dfrac{1}{(2k+1)!}\right)^2 \dfrac{\Delta t^{4k}}{2^{4k}} \left( \norm[\bigg]{\dfrac{\partial^{2k+1} p}{\partial t^{2k+1}}(t^n)}^2_{\varepsilon^{-1}} \!\! + \norm[\bigg]{\dfrac{\partial^{2k+1} E}{\partial t^{2k+1}}(t^n)}^2_{\varepsilon} \!\! + \norm[\bigg]{\dfrac{\partial^{2k+1} H}{\partial t^{2k+1}}(t^{n + \frac{1}{2}})}^2_{\mu}\right) \right] \\
\le C_0 h^{2 r} \sum \limits_{k=0}^{\frac{R}{2} - 1} \int\limits_0^T \left[ \norm[\bigg]{\dfrac{\partial^{2k+1} p}{\partial t^{2k+1}}(t^n)}^2_{\varepsilon^{-1}} \!\! + \norm[\bigg]{\dfrac{\partial^{2k+1} E}{\partial t^{2k+1}}(t^n)}^2_{\varepsilon} \!\! + \norm[\bigg]{\dfrac{\partial^{2k+1} H}{\partial t^{2k+1}}(t^{n + \frac{1}{2}})}^2_{\mu} \right] dt \\
= C_0 h^{2 r} \sum \limits_{k=0}^{\frac{R}{2} - 1} \left[ \norm[\bigg]{\dfrac{\partial^{2k+1} p}{\partial t^{2k+1}}}^2_{L^2(0, T; L^2_{\varepsilon^{-1}}(\Omega))} \!\! + \norm[\bigg]{\dfrac{\partial^{2k+1} E}{\partial t^{2k+1}}}^2_{L^2(0, T; L^2_{\varepsilon}(\Omega))} \!\! + \norm[\bigg]{\dfrac{\partial^{2k+1} H}{\partial t^{2k+1}}}^2_{L^2(0, T; L^2_{\mu}(\Omega))} \right],
\end{multline*}
and likewise for the $L^2$ projection of the remainder terms:
\begin{multline*}
  \Delta t \sum\limits_{n = 0}^N \left[ \norm[\bigg]{(I - \Pi_h^0) R_p^n}_{\varepsilon^{-1}} \!\! + \norm[\bigg]{(I - \Pi_h^1) R_E^n}_{\varepsilon} \!\! + \norm[\bigg]{(I - \Pi_h^2) R_H^{n + \frac{1}{2}}}_{\mu} \right], \\
  \le C_0 h^{2 r} \sum\limits_{n = 0}^N  \Delta t \left[ \norm{R_p^n}_{L^2_{\varepsilon^{-1}}(\Omega)} + \norm{R_E^n}_{L^2_{\varepsilon}(\Omega)} + \norm{R_H^{n + \frac{1}{2}}}_{L^2_{\mu}(\Omega)} \right], \\
  \le C_0 h^{2 r} (\Delta t)^{2R} \left[ \norm[\bigg]{\dfrac{\partial^{R+1} p}{\partial t^{R+1}}}^2_{L^2(0, T; L^2_{\varepsilon^{-1}}(\Omega))} \!\! + \norm[\bigg]{\dfrac{\partial^{R+1} E}{\partial t^{R+1}}}^2_{L^2(0, T; L^2_{\varepsilon}(\Omega))} \!\! + \norm[\bigg]{\dfrac{\partial^{R+1} H}{\partial t^{R+1}}}^2_{L^2(0, T; L^2_{\mu}(\Omega))} \right].
\end{multline*}
We next set $p_h^0 \coloneq \Pi_h^0 p_0$, $E_h^0 \coloneq \Pi_h^1 E_0$, and $H_h^0 \coloneq \Pi_h^2 H_0$, and note that for $0 \leqslant m \leqslant \dfrac{R}{2}$, $2 \leqslant S \leqslant R$, $S$ even, $\ell > 0$, and for $\Phi$ to be an appropriate operator (as before), there exists positive bounded constants $M_m$ and $M_S$ such that:
\begin{align*}
  \norm[\bigg]{\dfrac{\partial^{2m+1} p}{\partial t^{2m+1}}}^2_{L^2(0, T; L^2_{\varepsilon^{-1}}(\Omega))} \!\! + \norm[\bigg]{\dfrac{\partial^{2m+1} E}{\partial t^{2m+1}}}^2_{L^2(0, T; L^2_{\varepsilon}(\Omega))} \!\! + \norm[\bigg]{\dfrac{\partial^{2m+1} H^{2m+1}}{\partial t}}^2_{L^2(0, T; L^2_{\mu}(\Omega))} &\le M_m, \\
\norm[\bigg]{\dfrac{\partial^S \left(\Phi (p) \right)}{\partial t^S}}^2_{L^2(0, T; L^2_{\varepsilon^{-1}}(\Omega))} \!\! + \, \left(\mu \varepsilon \right)^{-\ell} \norm[\bigg]{\dfrac{\partial^S \left(\Phi (E) \right)}{\partial t^S}}^2_{L^2(0, T; L^2_\varepsilon(\Omega))} \!\! + \, \left(\mu \varepsilon \right)^{-\ell} \norm[\bigg]{\dfrac{\partial^S (\Phi (H))}{\partial t^S}}^2_{L^2(0, T; L^2_\mu(\Omega))} &\le M_S,
\end{align*}
Consequently, we have the following estimate:
\[
  \norm{\eta_h^{N-\frac{1}{2}}}^2_{\varepsilon^{-1}} + \norm{\zeta_h^{N-\frac{1}{2}}}^2_{\varepsilon} + \norm{\xi_h^N}^2_{\mu} \le \widetilde{C} \left[h^{2 r} + h^{2 r} (\Delta t)^{2R} + (\Delta t)^{2R} \right],
\]
where $\widetilde{C} = 2 \max\{C_0 M_m, M_m, M_S \} \exp(2(R+4) T)$ for $0 \leqslant m \leqslant \dfrac{R}{2}$, $2 \leqslant S \leqslant R$, $S$ even and then with the equivalence between $1$- and $2$-norms we get that:
\[
  \norm{\eta_h^{N-\frac{1}{2}}}_{\varepsilon^{-1}} + \norm{\zeta_h^{N-\frac{1}{2}}}_{\varepsilon} + \norm{\xi_h^N}_{\mu} \le C_1 \left[h^{r} + h^r (\Delta t)^R + (\Delta t)^R \right],
\]
where $C_1 = 2 \sqrt{\widetilde{C}}$. Also, there are positive bounded constants $\widetilde{C_2}$, $\widetilde{C_3}$ and $\widetilde{C_4}$ such that:
\begin{alignat*}{4}
  \norm{\eta^{N-\frac{1}{2}}}_{\varepsilon^{-1}} &= \norm{(I - \Pi_h^0) p(t^{N-\frac{1}{2}})}_{\varepsilon^{-1}} &&\le \widetilde{C_2} h^r \norm{p(t^{N-\frac{1}{2}})}_{L^2_{\varepsilon^{-1}}(\Omega)} &&\implies \norm{\eta^{N-\frac{1}{2}}}_{\varepsilon^{-1}} &&\le C_2 h^r, \\
  \norm{\zeta^{N-\frac{1}{2}}}_{\varepsilon} &= \norm{(I - \Pi_h^1) E(t^{N-\frac{1}{2}})}_{\varepsilon} &&\le \widetilde{C_3} h^r \norm{E(t^{N-\frac{1}{2}})}_{L^2_{\varepsilon}(\Omega)} &&\implies \norm{\zeta^{N-\frac{1}{2}}}_{\varepsilon} &&\le C_3 h^r, \\
  \norm{\xi^N}_\mu &= \norm{(I - \Pi_h^2) H(t^N)}_\mu &&\le \widetilde{C_4} h^r \norm{H(t^N)}_{L^2_{\mu}(\Omega)} &&\implies \norm{\xi^N}_\mu &&\le C_4 h^r,
\end{alignat*}
in which $C_2 = \widetilde{C_2} \norm{p(t^{N-\frac{1}{2}})}_{L^2_{\varepsilon^{-1}}(\Omega)}$, $C_3 = \widetilde{C_3} \norm{E(t^{N-\frac{1}{2}})}_{L^2_{\varepsilon}(\Omega)}$ and $C_4 = \widetilde{C_4} \norm{H(t^N)}_{L^2_{\mu}(\Omega)}$ are all bounded positive constants due to Theorem~\ref{thm:dscrt_enrgy_estmt_lfR}. Finally, this provides us with our desired result by choosing $C = C_1 + C_2 + C_3 + C_4$:
\begin{align*}
  \norm{e_{p_h}^{N - \frac{1}{2}}}_{\varepsilon^{-1}} + \norm{e_{E_h}^{N - \frac{1}{2}}}_{\varepsilon} + \norm{e_{H_h}^{N}}_{\mu} &= \norm{\eta^{N - \frac{1}{2}} - \eta^{N - \frac{1}{2}}_h}_{\varepsilon^{-1}} + \norm{\zeta^{N - \frac{1}{2}} - \zeta^{N - \frac{1}{2}}_h}_{\varepsilon} + \norm{\xi^N - \xi^N_h}_\mu \\
  &\le C \left[ (\Delta t )^R + h^r + h^r (\Delta t)^R \right]. \qedhere
\end{align*}
\end{proof}

\section{Numerical Results and Summary}\label{sec:numerics}

We present two examples in $\R^2$ as stated below, and demonstrate their numerical solution using LF$_6$ in conjunction with linear and quadratic Whitney simplicial finite elements on a triangulation of $\Omega$. Our examples are both specified on the domain $\Omega$ which is a unit square in $\R^2$ and we compute the solution for time $t \in [0, 1]$.

\medskip \noindent \textbf{Example 1}: We consider the problem on $\Omega$ with analytical solutions, and material parameters as below.
\[
  p = 0, \quad E = %
  \begin{bNiceMatrix}
    \sin \pi y \cos \pi t \\
    \sin \pi x \cos \pi t
  \end{bNiceMatrix}, %
  \quad H = (\cos \pi y - \cos \pi x) \sin \pi t,
\]
\[\epsilon = 1, \quad \mu = 1.\]
The results of solving this problem with LF$_6$, and with linear and quadratic Whitney basis are shown in Figures~\labelcref{fig:example1_lf6_linear,fig:example1_lf6_quadratic}, respectively.

\medskip \noindent \textbf{Example 2}: This problem is also on $\Omega$ but with $p \ne 0$. The boundary conditions for $p$ and $E$ are nonhomogeneous, and they are homogeneous for $H$. The analytical solutions and the material paramaters are as below.
\[
  p = \left(\cos \pi x + \cos \pi y\right) \sin \pi t,
\]
\[
 E = %
  \begin{bNiceMatrix}
    \sin \pi (\sqrt{2} t - x - y) - \sin \pi x \cos \pi t \\
    -\sin \pi (\sqrt{2} t - x - y) - \sin \pi y \cos \pi t
  \end{bNiceMatrix}, %
  \quad H = -\sqrt{2} \sin \pi (\sqrt{2} t - x - y),
\]
\[\epsilon = 1, \quad \mu = 1.\]
The results of solving this problem with LF$_6$, and with linear and quadratic Whitney basis are shown in Figures~\labelcref{fig:example2_lf6_linear,fig:example2_lf6_quadratic}, respectively.

We wish to conclude by stating that we have delineated an arbitrarily higher (even) order time discretization scheme LF$_R$ for the $(p, E, H)$ Maxwell's systems in $\R^2$ and $\R^3$. The LF$_R$ scheme is implicit and uses two staggered time grids, and therefore is in the category of leapfrog schemes. We have analysed LF$_R$ in the full generality in conjunction with compatible and arbitrary-order de Rham finite elements spanned by Whitney basis. In doing so, we have also demonstrated its energy conservation and the rate of convergence of its error in terms of the time step $\Delta t$, mesh parameter $h$, time order $R$, and spatial polynomial order $r$. Our work is thus comprehensive in both defining a generalized scheme and performing its error analysis. We hope that as future work our method can be extended to other instances of this Maxwell's system as it applies to, for example, scattering and reflection problems, and other similar interesting real-world problems in computational electromagnetics. We also hope that our work would be beneficial for the paradigm of scientific machine learning especially Physics-informed neural networks for Maxwell's equations.



\printbibliography

\begin{figure}[htb]
  \centering
  \includegraphics[scale=1]{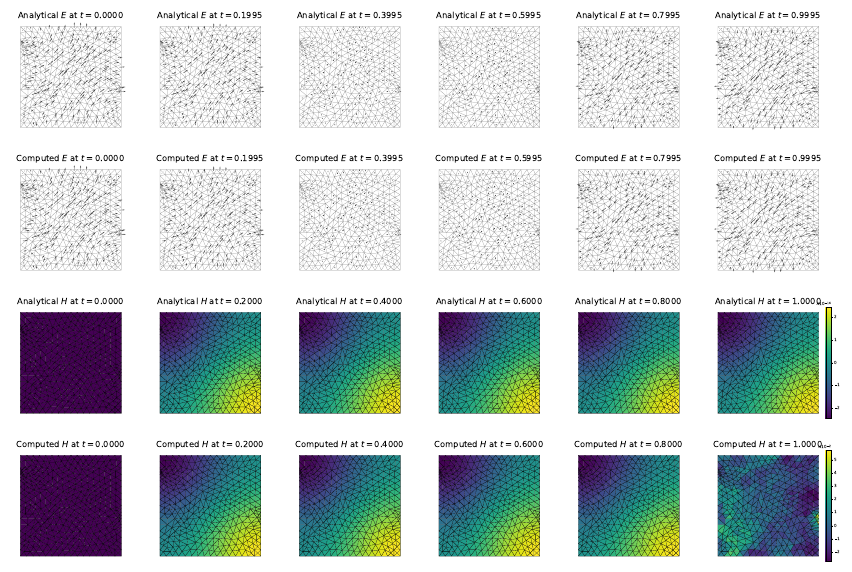}
  \caption{\textbf{Linear finite elements with LF$_6$}: Plots of solutions $(E_h^{n - 1/2}, H_h^n)$ at different times for \textbf{Example 1} of Section~\ref{sec:numerics} using LF$_6$ and linear Whitney forms for the FEEC spaces. The solutions for $p$ are not shown due to them being identically $0$. We also show projections of the analytical solutions to the linear Whitney forms spaces at these time steps for a visual comparison.}
  \label{fig:example1_lf6_linear}
\end{figure}

\begin{figure}[htb]
  \centering
  \includegraphics[scale=1]{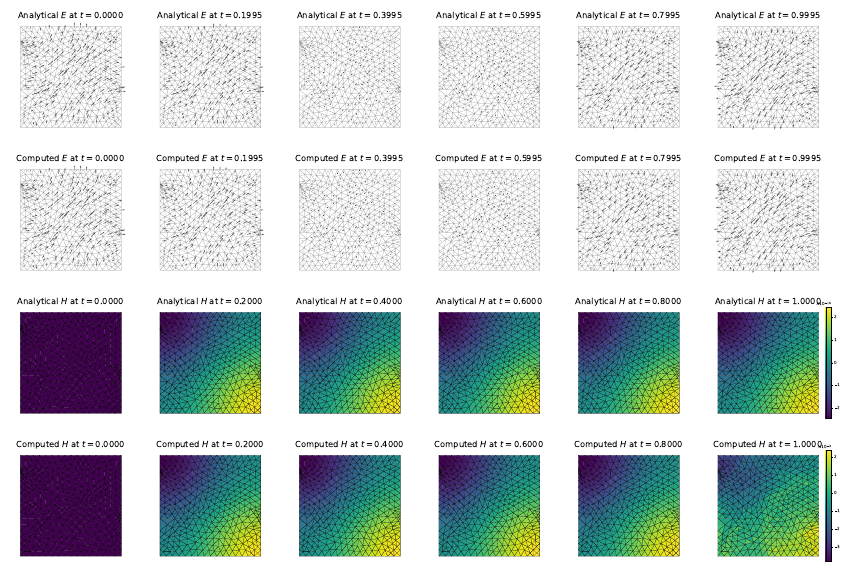}
  \caption{\textbf{Quadratic finite elements with LF$_6$}: Plots of solutions $(E_h^{n - 1/2}, H_h^n)$ at different times for \textbf{Example 1} of Section~\ref{sec:numerics} using LF$_6$ and quadratic Whitney forms in FEEC. The solutions for $p$ are again not shown due to them being identically $0$. We also show projections of the analytical solutions to the quadratic Whitney forms spaces at these time steps for a visual comparison.}
  \label{fig:example1_lf6_quadratic}
\end{figure}

\begin{figure}[htb]
  \centering
  \includegraphics[scale=1]{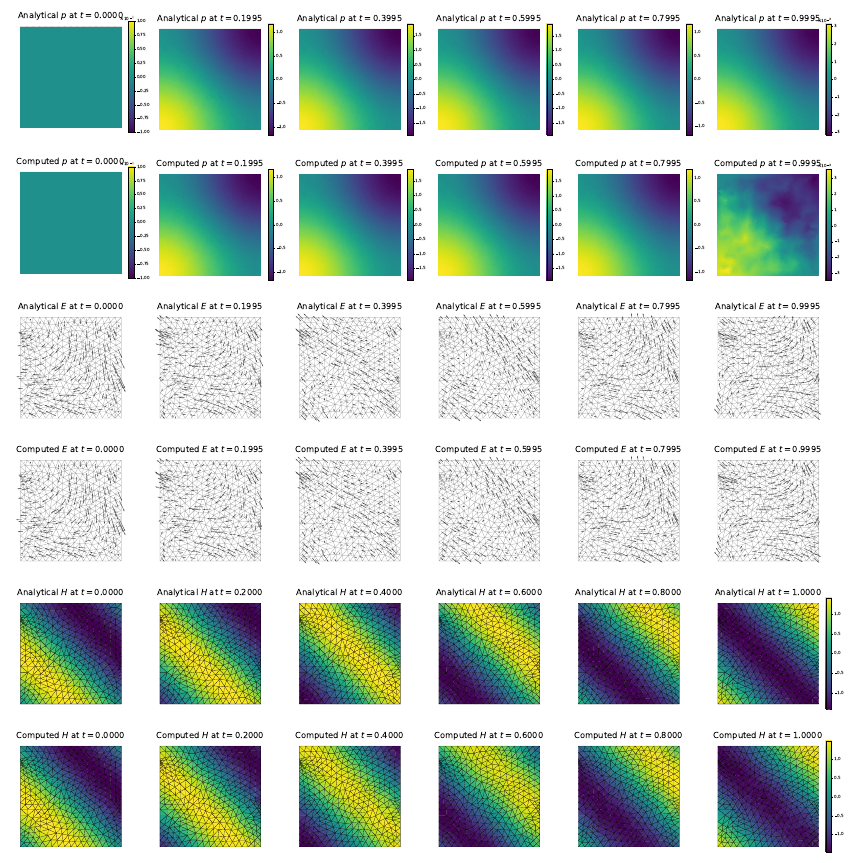}
  \caption{\textbf{Linear finite elements with LF$_6$}: Plots of solutions $(p_h^{n + 1/2}, E_h^{n - 1/2}, H_h^n)$ at different times for \textbf{Example 2} of Section~\ref{sec:numerics} using the LF$_6$ and linear Whitney forms in FEEC. We also show projections of the analytical solutions to the linear Whitney forms spaces at these time steps for a visual comparison.}
  \label{fig:example2_lf6_linear}
\end{figure}

\begin{figure}[htb]
  \centering
  \includegraphics[scale=1]{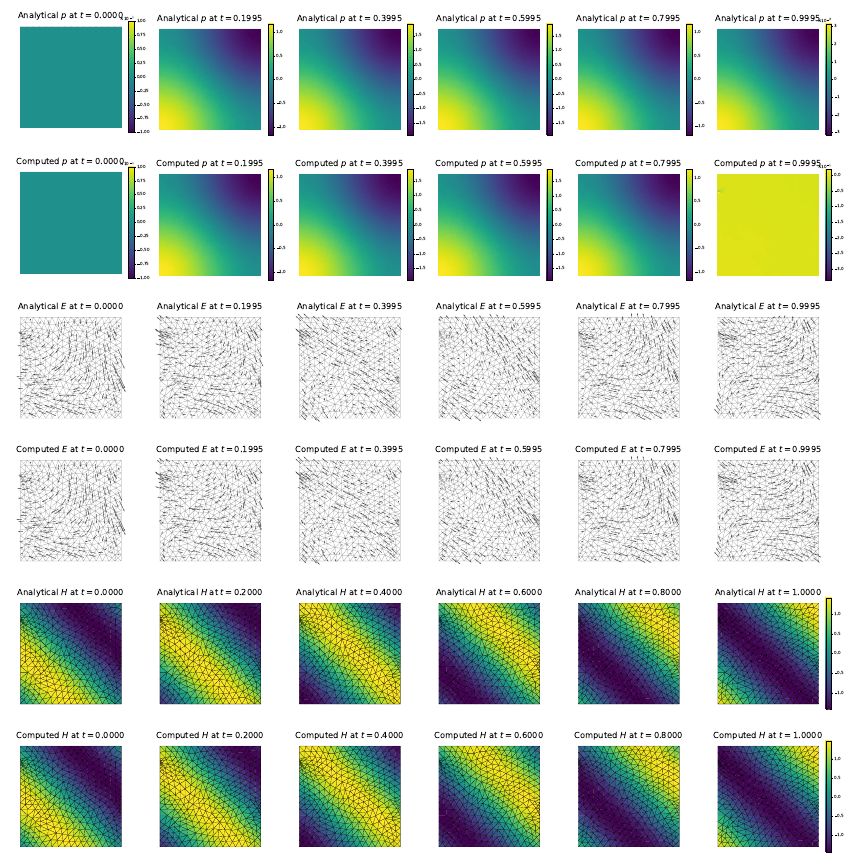}
  \caption{\textbf{Quadratic finite elements with LF$_6$}: Plots of solutions $(p_h^{n - 1/2}, E_h^{n - 1/2}, H_h^n)$ at different times for \textbf{Example 2} of Section~\ref{sec:numerics} using the LF$_6$ and quadratic Whitney forms in FEEC. We also show projections of the analytical solutions to the quadratic Whitney forms spaces at these time steps for a visual comparison.}
  \label{fig:example2_lf6_quadratic}
\end{figure}

\end{document}